\algrenewcommand\algorithmicrequire{\textbf{Input:}}
\algrenewcommand\algorithmicensure{\textbf{Output:}}
\newcommand{\C}{\mathcal{C}}
\DeclareMathOperator{\lmax}{\lambda_{\text{max}}}
\begin{document}

%For papers
\title{New Sidorenko-type inequalities in tournaments}

\author{Xiaoyu He}
\author{Nitya Mani}
\author{Jiaxi Nie}
\author{Nathan Tung}
\author{Fan Wei}

\address{Georgia Institute of Technology, Atlanta, GA 30332, USA}
\email{\{xhe399,jnie47\}@gatech.edu}

\address{Massachusetts Institute of Technology, Cambridge, MA 02139, USA}
\email{nmani@mit.edu}

\address{Stanford University, CA 94305, USA}
\email{ntung@stanford.edu}

\address{Duke University, Durham, NC 27708, USA}
\email{fan.wei@duke.edu}

\begin{abstract}
As a directed analog of Sidorenko's conjecture in extremal graph theory, Fox, Himwich, Zhou, and the second author defined an oriented graph $H$ to be \textit{tournament Sidorenko} (\textit{anti-Sidorenko}) if the random tournament asymptotically minimizes (maximizes) the number of copies of $H$ among all tournaments. We prove new inequalities of this form for oriented trees and cycles, considering both local and global notions of being Sidorenko.

We make progress on a conjecture of the aforementioned authors that every tree has an anti-Sidorenko direction, and give a characterization of short paths. For long paths we show that orientations are split symmetrically between being locally Sidorenko and anti-Sidorenko, yet almost all orientations are not globally Sidorenko. Finally, we give algorithms characterizing the local Sidorenko status of paths and cycles when the number of vertices is not divisible by four.
\end{abstract}
\maketitle

\section{Introduction}\label{sec:intro}
For graphs $H$ and $G$, a homomorphism from $H$ to $G$ is a function from $V(H)$ (the vertex set of $H$) to $V(G)$ that sends edges to edges. Let $\hom(H,G)$ be the number of homomorphisms from $H$ to $G$, and let
$$
t_H(G) = \frac{\hom(H,G)}{v(G)^{v(H)}}
$$
denote the homomorphism density of $H$ in $G$. Sidorenko's conjecture, posed independently by Erd\H{o}s-Simonovits~\cite{SIM82} and Sidorenko~\cite{SID93}, states that for any bipartite graph $H$ and any graph $G$
\begin{equation}\label{eq:origsiddef}
    t_H(G) \ge t_{K_2}(G)^{e(H)},
\end{equation}
where $K_2$ is the single edge graph. Equivalently, among all graphs $G$ with the same vertex count $n$ and edge density $p$, the Erd\H{o}s-R\'enyi random graph $G(n,p)$ asymptotically minimizes $t_H(G)$ (in expectation). This surprising statement, that a random graph is essentially the minimizer for a simple extremal problem, has been the subject of intense study over the last three decades, and many cases have been proved \cites{SID93,CON10,LI17,KIM16,CON18,SZE}. 

The analogous question for directed graphs was proposed by Fox, Himwich, Zhou, and the second author \cite{genFHMZ22}: under what conditions does the random directed graph $G$ minimize the number of copies of a directed graph $D$? Roughly speaking, they showed that the directed Sidorenko's conjecture is equivalent to the (slightly stronger) asymmetric undirected version, and no new phenomena arise in this context~\cite{genFHMZ22}. In follow-up work \cite{fox2024variationssidorenkosconjecturetournaments}, they discovered that in a suitably restricted version of this problem, fixing $D$ to be directed and $G$ to be a tournament, surprising new phenomena do arise, including the existence of \textit{anti-Sidorenko} inequalities where the random tournament $G$ is the asymptotic maximizer for $t_D(G)$ among all tournaments with the same vertex count. A special case of this problem was previously solved by Zhao and Zhou~\cite{ZHAO19}, who characterized the family of directed graphs $D$ that are impartial, meaning $t_D(G)$ is invariant to the choice of tournament $G$ (for example, a single directed edge). More recently Sah, Sawhney, Zhao \cite{SSZ20} and Grzesik, Král', Lovász, Volec \cite{cycles} have studied the maximum density of directed paths and cycles into tournaments, where all edges of the path or cycle are oriented the same way (left to right or clockwise). This paper investigates these tournament Sidorenko-style inequalities further, focusing on other orientations of trees and cycles.

There are several equivalent formulations of what it means for a directed graph to be tournament Sidorenko or anti-Sidorenko. One of the simplest is that a directed graph $D$ is TS (\textit{tournament Sidorenko}) if
\begin{equation}\label{eq:siddef}
    t_D(G) \ge (1-o(1))t_{K_2}(G)^{e(D)}
\end{equation}
holds for any tournament $G$, and $D$ is TAS (\textit{tournament anti-Sidorenko}) if
\begin{equation}\label{eq:antisiddef}
    t_D(G) \le t_{K_2}(G)^{e(D)}
\end{equation}
holds for any tournament $G$. The asymmetry is due to the assumed lack of self-loops in $G$. Note that if $G$ is an $n$-vertex tournament, then $t_{K_2}(G)=\frac{\binom{n}{2}}{n^2}=\frac{1}{2}-\frac{1}{2n}$.

The other formulations come from either replacing $t_D(G)$ with another normalization or requiring the inequalities to hold for a larger class of objects. Define a \textit{weighted tournament} on $n$ vertices to be an adjacency matrix $A: [n]^2 \to [0,1]$, and a \textit{tournamenton} to be a function $A: [0,1]^2 \to [0,1]$. In both cases we require $A(x,y) + A(y,x) = 1$ for all $x,y$, which imposes the tournament-like condition. For a weighted tournament $A$ and directed graph $D$, define the weighted homomorphism count of $D$ into $A$ to be
$$
h_D(A) \coloneqq \sum_{\phi: V(D) \to [n]} \prod_{(x,y) \in E(D)} A(\phi(x),\phi(y))
$$
and extend $t_D$ to weighted tournaments or tournamentons $A$ by
\begin{equation}\label{eq:hom-density}
t_D(A)=\int \prod_{(x_1,x_2)\in E(D)}A(x_1,x_2)\prod_{x\in V(D)}dx,
\end{equation}
where the integral is expectation over uniform $x_1,\dots,x_{V(D)}$ in $[0,1]$ or $[n]$. Then it is equivalent in the definition of TS/TAS to require that the inequalities hold not just for all tournaments but for all weighted tournaments or tournamentons, and in the case of weighted tournaments also to replace \eqref{eq:siddef} or \eqref{eq:antisiddef} with
$$
h_D(A) \ge \frac{n^{v(D)}}{2^{e(D)}} \text{ or } h_D(A) \le \frac{n^{v(D)}}{2^{e(D)}}
$$
respectively. This follows from the fact that tournamentons can be approximated by weighted tournaments, which in turn can be approximated by unweighted tournaments (see \cite{cycles} for more on tournamentons).

The following conjecture of Fox, Himwich, Zhou, and the second author appeared in \cite{fox2024variationssidorenkosconjecturetournaments}.
\begin{conjecture}[TAS trees]\label{conj:originaltree}
    For every undirected tree, there exists some orientation that has the tournament anti-Sidorenko property.
\end{conjecture}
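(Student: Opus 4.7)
The plan is to attempt induction on $e(T)$, with a degree-balancing heuristic guiding the orientation at each step. The base case of a single edge is immediate, since $\int A(x,y)\,dx\,dy = 1/2$ for every tournamenton. For the inductive step, let $v$ be a leaf of $T$ with neighbor $u$, and set $T' = T - v$. By induction, $T'$ admits a TAS orientation $\vec{T'}$, and we define the rooted density
\[
f(x) = \int \prod_{(p,q) \in E(\vec{T'})} A(\phi(p), \phi(q)) \prod_{w \neq u} d\phi(w),
\]
where the integral is taken over all $\phi$ with $\phi(u) = x$, so that $\int f \le 2^{-e(T')}$. The two possible orientations of the pendant edge $uv$ produce the densities $\int f(x) b(x)\,dx$ and $\int f(x)(1 - b(x))\,dx$, where $b(x) = \int_y A(x,y)\,dy$. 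Since these two quantities sum to $\int f$, at least one of them is $\le 2^{-e(T)}$.

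The central obstacle is that the winning orientation above depends on the tournamenton $A$, whereas the conjecture demands a single orientation that works uniformly over all $A$. I would attempt to resolve this by strengthening the inductive hypothesis: ask for an orientation $\vec{T}$ together with a distinguished leaf $u$ such that the rooted density $f_u$ satisfies some uniform bound (for instance $\int f_u(x) g(x)\,dx \le 2^{-e(T)}$ for every measurable $g : [0,1] \to [0,1]$ with $\int g = 1/2$), strong enough that either extension at the next pendant attachment preserves the invariant. Verifying closure under attachment is the crux, and it seems to require delicate control of how the rooted density interacts with partial tournamenton data rather than just its total integral. Supporting evidence comes from the star $K_{1,k}$ oriented with $\lceil k/2 \rceil$ out-edges and $\lfloor k/2 \rfloor$ in-edges, which is TAS via the following concave-envelope computation: the integrand $\phi(b) = b^{\lceil k/2 \rceil}(1-b)^{\lfloor k/2 \rfloor}$ satisfies $\psi(1/2) = \phi(1/2) = 2^{-k}$, where $\psi$ is its concave envelope on $[0,1]$, because a direct calculation shows the tangent from $(0,0)$ to $\phi$ touches $\phi$ at the point $b = 1/2$ precisely when the exponents differ by at most $1$.

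I expect the main difficulty to be the absence of a natural convex structure on the space of tournamentons, which lie on the affine hyperplane $A(x,y) + A(y,x) = 1$ rather than a cone; this blocks the Jensen-type reductions that drive many positive results in the undirected Sidorenko setting. The hardest cases are likely trees with a vertex of high odd degree, where the per-vertex integrand exceeds $2^{-k}$ pointwise and one must instead exploit the global moment constraint $\int b = 1/2$ via concave envelopes. Propagating such envelope bounds vertex-by-vertex while maintaining compatibility between rooted densities arriving from adjacent subtrees is where I expect the argument either to succeed through an inductive Cauchy--Schwarz or to require a genuinely new idea beyond the leaf-attachment framework.
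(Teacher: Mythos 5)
Conjecture~\ref{conj:originaltree} is exactly that---a conjecture, originally posed in \cite{fox2024variationssidorenkosconjecturetournaments} and still open. The paper you are reading does not prove it; it proves partial results (Theorem~\ref{thm:isopairtas} on isomorphic-pair pruning, via the strongly-TAS gluing machinery of Proposition~\ref{prop:amgmstrong} and Lemma~\ref{lem:gluing}, and Theorem~\ref{thm:caterpillar} on caterpillars, via entropy) and explicitly flags the $2$--$3$--$4$ tree as a small unresolved case. That your attempt does not close the argument is therefore consistent with the state of the art, and you diagnose the core obstruction---the winning pendant orientation depends on the tournamenton $A$---correctly.

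Two concrete remarks on the fix you sketch. First, the invariant you propose, $\int f_u(x)g(x)\,dx \le 2^{-e(T)}$ for every $g:[0,1]\to[0,1]$ with $\int g = 1/2$, is strictly weaker than what actually makes gluing work. The paper's Definition~\ref{def:strongtas} of \emph{strongly TAS} asks for the pointwise bound: every embedding of the distinguished vertex into $T$ extends to at most $2^{-e(D)}n^{v(D)-1}$ labeled copies, i.e. $f_u(x)\le 2^{-e(D)}$ for every $x$. Your averaged condition does not control $f_u$ on small sets, and when a further edge is attached at $u$ or a second rooted subtree is fused there, it is the pointwise version that is needed (this is precisely the hypothesis of Lemma~\ref{lem:gluing}). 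Second, and more fundamentally, even the pointwise invariant cannot be carried through a leaf-by-leaf induction, because the next leaf of a general tree may hang off any vertex, not the one at which you tracked the rooted density; maintaining strongly-TAS control at all potential attachment points simultaneously is not generally possible. The paper sidesteps this by never attaching a single leaf: Proposition~\ref{prop:amgmstrong} attaches an isomorphic \emph{pair} of rooted subtrees through a shared articulation vertex $v$, oriented one inward and one outward, so that AM--GM yields the pointwise bound $N(D,T\mid v\hookrightarrow t) \le N(H,T)^2/4$ unconditionally in $t$. That symmetrization is what manufactures a strongly-TAS pair from a merely TAS piece, and it is a structurally different move from single-leaf attachment. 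Your star computation via concave envelopes is a correct special case of the balanced-degree philosophy (and recovers the known result for trees with at most one even-degree vertex), but it does not export to the inductive step, since $\phi(b)=b^a(1-b)^c$ must be integrated against a rooted density $f_u$ that need not be near-constant.
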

In the same paper they proved that it holds for any tree with exactly one vertex of even degree. Our first main results reduce this conjecture by proving it for new families of trees. The first such result actually applies to general graphs, and the problem of finding TAS directions of arbitrary undirected graphs.

\begin{definition}\label{def:isopair}
    Let $G$ be an undirected graph. Call disjoint subsets $H_1,H_2 \subset V(G)$ with $e(H_1,H_2) = \emptyset$ an \textit{isomorphic pair} if there exists an isomorphism of induced subgraphs $\phi: G[H_1] \to G[H_2]$ and vertices $w \in H_1, v \in V(G)$ such that the cut $e(H_1 \cup H_2,V(G) \setminus (H_1 \cup H_2)) = \set{\set{v,w},\set{v,\phi(w)}}$.
\end{definition}
In the case $G$ is a tree and one wants to think of it as rooted, essentially $H_1,H_2$ are isomorphic subtrees sharing a parent. For example two leaves sharing a parent form an isomorphic pair, as do the vertices $\set{1,2,\dots,k-1}, \set{k+1,k+2,\dots,2k}$ in a path with vertex set $[2k]$.
\begin{restatable}[Isomorphic pair pruning]{theorem}{isopair}\label{thm:isopairtas}
    Let $G$ be an undirected graph with isomorphic pair $H_1,H_2$. If $G[V(G) \setminus (H_1 \cup H_2)]$ and $G[H_1]$ have TAS orientations, then so does $G$.
\end{restatable}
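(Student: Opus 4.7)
The plan is to use the TAS orientations supplied by hypothesis on $G'$ and $G[H_1]$, transport the latter to $G[H_2]$ via the isomorphism $\phi$, and orient the two bridge edges in \emph{opposite} directions so that an AM--GM estimate absorbs them exactly into the TAS bound. Concretely, fix TAS orientations $D'$ of $G' := G[V(G) \setminus (H_1 \cup H_2)]$ and $D_1$ of $G[H_1]$; let $D_2$ be the image of $D_1$ under $\phi$; and orient the two bridges as $v \to w$ and $\phi(w) \to v$. Call the resulting oriented graph $D$, and let $A$ be an arbitrary tournamenton.

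The key step is to condition on the image $x_v$ of $v$. By \Cref{def:isopair} the only edges crossing from $H_1 \cup H_2$ to its complement are the two bridges through $v$, and no edges run between $H_1$ and $H_2$, so the integrand defining $t_D(A)$ factors cleanly over $H_1$, $H_2$, and $V(G')$ once $x_v$ is fixed. Set $F(x_v) := \int A(x_v, x_w) \prod_{e \in D_1} A_e \prod_{u \in H_1} dx_u$ and $\tilde F(x_v) := \int A(x_w, x_v) \prod_{e \in D_1} A_e \prod_{u \in H_1} dx_u$. After relabeling by $\phi^{-1}$, the $H_2$ factor in the integrand equals $\tilde F(x_v)$, whence $t_D(A) = \int \bigl[\prod_{e \in D'} A_e\bigr] F(x_v)\, \tilde F(x_v) \prod_{u \in V(G')} dx_u$. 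The tournamenton identity $A(x_v, x_w) + A(x_w, x_v) = 1$ makes $F(x_v) + \tilde F(x_v) = t_{D_1}(A)$ a constant in $x_v$, so AM--GM pointwise yields $F(x_v)\, \tilde F(x_v) \le t_{D_1}(A)^2 / 4$. Combining with the TAS hypotheses $t_{D_1}(A) \le 2^{-e(D_1)}$ and $t_{D'}(A) \le 2^{-e(D')}$ and the edge count $e(D) = e(D') + 2 e(D_1) + 2$ gives $t_D(A) \le \tfrac{1}{4}\, t_{D_1}(A)^2\, t_{D'}(A) \le 2^{-e(D)}$, which is the TAS inequality.

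The main conceptual hurdle is identifying the correct orientation of the two bridges: orienting them in parallel would replace $F(x_v)\,\tilde F(x_v)$ by $F(x_v)^2$, which Jensen's inequality bounds in the \emph{wrong} direction. It is precisely the tournamenton antisymmetry $A(x,y) = 1 - A(y,x)$, accessed via opposite orientations, that turns the pair of bridges into the affine constraint $F + \tilde F = t_{D_1}(A)$, so that AM--GM caps $F\tilde F$ uniformly in $x_v$ and the resulting factor of $1/4$ accounts for the two extra edges exactly. Once the right orientation is chosen, the remainder of the argument is a routine factorization of the homomorphism integral.
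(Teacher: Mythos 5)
Your proof is correct and uses the same central idea as the paper's: orient the two bridge edges oppositely so that, after conditioning on the image of $v$, the two conditional factors sum to the constant $t_{D_1}(A)$ and AM--GM caps their product by $t_{D_1}(A)^2/4$. The paper packages exactly this computation through its ``strongly TAS'' framework --- \Cref{prop:amgmstrong} is the AM--GM step applied to $N(D,T\mid v\hookrightarrow t)$, and \Cref{lem:gluing} then glues the resulting strongly-TAS digraph onto a TAS orientation of $G'$ --- whereas you carry out the factorization and gluing directly in the tournamenton integral, which is a slightly more self-contained route to the same bound.
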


In the case of trees we can say more. Call a tree a \textit{caterpillar} if removing all leaves results in a path.

\begin{theorem}[Caterpillar]\label{thm:caterpillar}
Every caterpillar has a TAS orientation.
\end{theorem}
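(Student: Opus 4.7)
The approach is strong induction on $|V(G)|$, using \Cref{thm:isopairtas} as the main reduction tool. The idea is to peel off pairs of leaves (or, when that fails, larger isomorphic pairs) to successively simplify the caterpillar, ultimately reducing either to a path (handled by known directed-path TAS results) or to a trivial base case.

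The first step would be a leaf-pairing reduction: if some spine vertex $v_i$ carries two leaves $\ell_1, \ell_2$, then $H_1 = \{\ell_1\}$ and $H_2 = \{\ell_2\}$ form an isomorphic pair in the sense of \Cref{def:isopair}, with $v = v_i$, $w = \ell_1$, $\phi(w) = \ell_2$, and cut $\{\{v_i,\ell_1\},\{v_i,\ell_2\}\}$. Since $G[H_1]$ is a single vertex (trivially TAS) and $G \setminus (H_1 \cup H_2)$ is a smaller caterpillar that is TAS by induction, \Cref{thm:isopairtas} yields a TAS orientation of $G$. Iterating, this reduces the problem to the case of a \textit{thin} caterpillar, meaning each spine vertex carries at most one leaf.

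For thin caterpillars I would split into two regimes. If the thin caterpillar happens to be a path (no interior spine vertex is hairy), I would invoke the known result of Sah--Sawhney--Zhao \cite{SSZ20} and Grzesik--Kr\'al'--Lov\'asz--Volec \cite{cycles} that the transitively directed path is TAS. If instead there is at least one interior hairy vertex but the leaf profile is palindromic about some central spine vertex $v_i$ (with odd spine length), then the left and right sub-caterpillars rooted at $v_{i-1}$ and $v_{i+1}$ together form an isomorphic pair with $v = v_i$ and cut $\{\{v_i,v_{i-1}\},\{v_i,v_{i+1}\}\}$. Applying \Cref{thm:isopairtas} then reduces the problem to the star $G[\{v_i\} \cup (\text{leaves of } v_i)]$ (handled by further leaf pairing) together with a smaller sub-caterpillar (handled by induction).

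The main obstacle will be handling thin caterpillars with interior hairy vertices whose profile is \emph{not} palindromic: neither leaf pairing nor a central symmetric split directly applies. Here I would look for more general isomorphic pair structures---for instance, peeling off symmetric portions from the two ends of the spine up to the maximal depth at which the profiles agree---or, failing that, abandon \Cref{thm:isopairtas} and construct a TAS orientation directly. A natural candidate is to direct all spine edges consistently and orient each interior leaf's edge by a rule that balances in-leaves and out-leaves at each spine vertex, then verify the TAS inequality via an explicit Cauchy--Schwarz-style integral inequality on the resulting homomorphism density. This last step is what I expect to require the most care.
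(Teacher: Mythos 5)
Your plan diverges substantially from the paper's proof and, as you yourself anticipate, leaves a genuine gap that is the crux of the theorem. The leaf-pairing step is correct: two leaves sharing a parent do form an isomorphic pair, $G[H_1]$ is a single vertex (vacuously TAS), and the complement is a smaller caterpillar, so \Cref{thm:isopairtas} plus induction reduces to thin caterpillars. The palindromic-split case is also fine once the residual star has been pre-pruned by leaf-pairing (leaving just $v_i$ or a single edge, both handled). But these reductions do not exhaust the thin caterpillars. Consider a spine $v_1 - \dots - v_5$ with exactly one leaf at each of $v_1, v_2, v_5$ and nowhere else: this is thin, non-palindromic, and has no isomorphic pair at all --- the only candidate subgraphs attach to distinct spine vertices, violating the cut condition of \Cref{def:isopair}. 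No amount of peeling matched prefixes/suffixes of the spine will escape such configurations. So \Cref{thm:isopairtas} alone cannot close the argument, and the deferred step (``direct construction plus Cauchy--Schwarz'') is not a small cleanup: it is essentially the whole theorem.

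The paper does not use the isomorphic pair machinery here at all. It constructs an explicit orientation for every caterpillar: orient along the longest path $X_0 \to X_1$, then at each spine vertex $X_i$ orient the hanging leaves alternately so that the numbers of in- and out-neighbors of $X_i$ are balanced, and pass the orientation along to $X_{i+1}$ with a parity rule on $d(X_i)$. It then bounds the homomorphism count by an entropy argument: write the entropy of a uniform random homomorphism via the chain rule traversing the spine in each of the two directions, average the two expansions so every interior spine vertex contributes a symmetric package $\sum_j H(Y_{i,j} \mid X_i) + \tfrac12 (H(X_{i-1}\mid X_i) + H(X_{i+1}\mid X_i))$, and bound each package by $\tfrac{s_i+1}{2}\log(n^2/4)$ using the uniform bound together with AM-GM on $d^+_T(x)\, d^-_T(x)$. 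This balanced orientation is precisely what makes the AM-GM step go through, and it handles all caterpillars (palindromic or not) without any reduction. If you want to salvage your route, you would still need to prove TAS for arbitrary thin caterpillars by some self-contained method --- and the entropy/AM-GM argument is, in effect, that method, so you might as well apply it to the caterpillar directly and skip the isomorphic-pair reductions.
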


Theorem \ref{thm:caterpillar} along with inductively applying Theorem \ref{thm:isopairtas} within each isomorphic pair reduces Conjecture \ref{conj:originaltree} to the following.

\begin{conjecture}
    For every undirected tree that is not a caterpillar with at least $2$ vertices of even degree and no isomorphic pair, there exists some orientation that has the tournament anti-Sidorenko property.
\end{conjecture}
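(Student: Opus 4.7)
The plan is to attack this via structural analysis combined with an inductive gluing argument. The trees satisfying all three hypotheses form a rigid class: \emph{no isomorphic pair} forces sibling subtrees rooted at a common vertex to be pairwise non-isomorphic, \emph{not a caterpillar} guarantees nontrivial branching structure away from any spine, and \emph{at least two even-degree vertices} rules out the clean center-rooted structure available for trees with a unique even-degree vertex \cite{fox2024variationssidorenkosconjecturetournaments}.

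First I would try to identify a canonical pivot vertex $v$ of even degree whose removal splits $T$ into subtrees $T_1, \ldots, T_d$, each of which already falls into a resolved case: $T_i$ should either be a caterpillar (apply Theorem \ref{thm:caterpillar}), contain an isomorphic pair (apply Theorem \ref{thm:isopairtas}), or be small enough for the one-even-vertex result of \cite{fox2024variationssidorenkosconjecturetournaments} to apply. This existence step is essentially combinatorial but requires a case analysis to verify that the three hypotheses always permit such a decomposition; the rigidity imposed by the no-isomorphic-pair condition should be crucial here.

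Given TAS orientations $D_i$ on each $T_i$, an orientation $D$ of $T$ is built by choosing directions for the $d$ edges from $v$ to the roots of the $T_i$. Using \eqref{eq:hom-density}, one can write $t_D(A) = \int \prod_i F_i(x_v)\,dx_v$, where each $F_i$ is the rooted branch density determined by $D_i$, $A$, and the chosen orientation of the attaching edge. The TAS inequality reduces to bounding this integral by $(1/2)^{e(D)}$, and the hope is that pairwise non-isomorphism of the $T_i$ lets one of the $2^d$ choices at $v$ produce enough H\"older-type cancellation to yield the required bound.

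The main obstacle, and the reason this remains a conjecture, is that the gluing inequality above does not follow from any existing tool. Both Theorem \ref{thm:isopairtas} and Theorem \ref{thm:caterpillar} exploit specific structural features (a symmetric pair and a linear spine, respectively) that are absent from the trees in question. A promising route may be to develop a family of \emph{rooted TAS inequalities}, indexed by rooted subtrees and applied multiplicatively at each internal vertex, then to combine them into a global bound at $v$. However, establishing such rooted inequalities for trees of arbitrary shape appears to require a genuinely new entropic or convexity-based technique, and it is not even clear that the canonical orientation suggested by the pivot construction is always the correct TAS witness --- verifying the inequality for some explicit small non-caterpillar example with two even-degree vertices and no isomorphic pair would be a useful sanity check before committing to this approach.
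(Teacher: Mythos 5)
This statement is a conjecture in the paper, not a theorem: the paper proves nothing here and instead records it as the residual open case after applying Theorem~\ref{thm:caterpillar} and Theorem~\ref{thm:isopairtas} to reduce Conjecture~\ref{conj:originaltree}. You correctly recognize this, and your proposal is transparently a research plan rather than a proof. That honesty is appropriate, since the paper itself explicitly highlights the $2$-$3$-$4$ tree (which is not a caterpillar, has several even-degree vertices, and has no isomorphic pair, since its three branches at the unique branching vertex all have different lengths) as the smallest unresolved example and poses it as a standalone Problem. Your suggested ``sanity check'' is therefore precisely the point at which the paper gets stuck.

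On the specific obstacles in your plan: the pivot-decomposition step is not obviously guaranteed to bottom out. Even if a pivot $v$ exists whose removal yields resolved pieces, the gluing step runs into exactly the difficulty the paper identifies. The paper's gluing machinery (Lemma~\ref{lem:gluing}) requires \emph{strongly} TAS pairs, not merely TAS orientations, and passing from TAS to strongly TAS at a designated vertex is nontrivial. In the isomorphic pair case, Proposition~\ref{prop:amgmstrong} performs this promotion via AM-GM, but the argument relies crucially on the two branches at $v$ being isomorphic, so that the in-count and out-count from $t$ are interchangeable images of the same rooted quantity. When the branches are pairwise non-isomorphic, the two factors in the product $N(H,T|w \hookrightarrow N^-(t)) \cdot N(H',T|w' \hookrightarrow N^+(t))$ are unrelated and AM-GM yields nothing useful. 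Your ``H\"older-type cancellation'' intuition is the right thing to want, but as you observe, no existing inequality delivers it; the paper's caterpillar proof (entropy on a linear spine, balancing in- and out-neighbors at each spine vertex) and the isomorphic-pair proof (AM-GM on a symmetric pair) each exploit a rigidity that non-caterpillar, isomorphic-pair-free trees lack by definition. So the gap you name is the genuine one, and the paper leaves it open.
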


A small tree for which Conjecture \ref{conj:originaltree} remains open is the $2-3-4$ tree. 
% This may be classified as a \emph{lobster}, a tree such that removing all leaves results in a caterpillar. The natural extension of the caterpillar orientation is
\begin{center}
\begin{tikzpicture}[scale=0.8]
  % style for all vertices as small filled circles
  % \tikzset{vertex/.style={fill, circle, inner sep=1.5pt}}

  % original vertices as dots
  \node (A) at (0,0) {B};
  \node (B) at (1,1) {C};
  \node (C) at (2,2) {D};
  \node (G) at (3,3) {I};
  \node (D) at (3,1) {E};
  \node (E) at (4,0) {F};
  \node (F) at (5,-1) {G};

  % new vertices at the three “leaf” ends
  \node (H) at (-1,-1) {A};
  \node (I) at (6,-2) {H};
  \node (J) at (4,4) {J};

  % edges, with three new arrows extending each branch
  \draw (H) -- (A);
  \draw (A) -- (B);
  \draw (B) -- (C);
  \draw (C) -- (G);
  \draw (G) -- (J);

  \draw (I) -- (F);
  \draw (F) -- (E);
  \draw (E) -- (D);
  \draw (D) -- (C);
\end{tikzpicture}
\end{center}

\begin{problem}
    Find a TAS orientation of the $2-3-4$ tree.
\end{problem}

Our other main results concern the behavior of long oriented paths with respect to both local and global notions of being Sidorenko and anti-Sidorenko. We call a directed graph $D$ LTS (\textit{locally tournament Sidorenko}) if \eqref{eq:siddef} holds for all tournaments $G$ that are sufficiently quasirandom, and LTAS (\textit{locally tournament anti-Sidorenko}) if \eqref{eq:antisiddef} holds for all sufficiently quasirandom $G$. The uniformity norm we will use to measure quasirandomness will be the spectral radius/cut norm; more on this and relations between uniformity norms in Section~\ref{sec:local}. In the undirected setting, the local version of Sidorenko is defined similarly: \eqref{eq:origsiddef} holds for all graphs $G$ close to quasirandom. The local version of Sidorenko's conjecture, that any bipartite graph is locally Sidorenko, was proved in \cite{loclov}. Note that, unlike for globally Sidorenko, being bipartite is not necessary for a graph to be locally Sidorenko. It was subsequently shown in \cite{locfox} that a graph is locally Sidorenko if and only if it has even girth or is a forest. Since locally Sidorenko graphs are completely understood in the undirected setting despite Sidorenko's conjecture remaining wide open, it is natural to hope that local versions of TS/TAS may be more tractable than the global notions.

We provide conditions under which oriented paths and cycles are LTS, LTAS, or neither and are able to achieve a full characterization when the number of vertices is not divisible by four. These conditions are easy to check, and lead to feasible algorithms for characterization. With regards to the typical orientation of long paths, we establish the following.
\begin{proposition}[Local Sidorenko symmetry]\label{prop:localwalk}
    For odd $n$ exactly $\frac{1}{2}$ of all oriented paths on $n+1$ edges are locally TS and exactly $\frac{1}{2}$ are locally TAS. For even $n$ the fraction of oriented paths on $n+1$ edges which are neither locally TS or locally TAS is at most ${n \choose n/2}2^{-n}$ and the number that are locally TS/TAS are each at least $\frac{1}{2}(1-{n \choose n/2}2^{-n})$.
\end{proposition}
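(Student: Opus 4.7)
The plan is to expand $t_D(A)$ about the constant tournamenton $A\equiv 1/2$ and read off the sign of the leading correction, which is a quadratic form in $W:=A-1/2$. Orient the path $D=v_0v_1\cdots v_{n+1}$ by $\epsilon_i\in\{\pm 1\}$ (so $\epsilon_i=+1$ iff $v_{i-1}\to v_i$), let $s(D)=|\{1\le i\le n:\epsilon_i\neq \epsilon_{i+1}\}|$ be the number of sign changes (equivalently, internal sources or sinks), and set $c(D):=2s(D)-n$. I will show that $c(D)>0$ forces $D$ to be LTS and $c(D)<0$ forces $D$ to be LTAS; the proposition then follows by counting.

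Since $W$ is skew-symmetric, expanding the product $\prod_i (1/2+\epsilon_i W)$ gives
$$t_D(A)=2^{-(n+1)}\sum_{S\subseteq E(D)}2^{|S|}\Big(\prod_{i\in S}\epsilon_i\Big)\int\prod_{i\in S}W(x_{i-1},x_i)\prod_j dx_j,$$
and the inner integral factors over maximal runs of consecutive edges in $S$. Writing $L_W$ for the integral operator with kernel $W$, a run of length $\ell$ evaluates to $\langle\mathbf{1},L_W^\ell\mathbf{1}\rangle$, which vanishes for odd $\ell$ (as $L_W$ is skew-adjoint) and equals $(-1)^{\ell/2}\|L_W^{\ell/2}\mathbf{1}\|^2$ otherwise. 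In particular, writing $f:=L_W\mathbf{1}$, the sole $\|W\|^2$-order contributions come from pairs of consecutive edges, and summing them produces
$$t_D(A)=2^{-(n+1)}\big(1+4c(D)\|f\|^2+R(W)\big),$$
where $R(W)$ collects all higher-order terms.

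To finish I will control $R(W)$. If $f\equiv 0$, then $L_W^\ell\mathbf{1}=0$ for every $\ell\ge 1$, so every term in the expansion with $|S|\ge 1$ vanishes and $t_D(A)=2^{-(n+1)}$ exactly — both TS and TAS inequalities hold trivially. If $f\not\equiv 0$, there are no order-three terms (odd-length runs vanish, and every other configuration contains a length-one run whose factor is zero), and Cauchy--Schwarz bounds every run of length $\ell\ge 4$ by $\|L_W\|^{\ell-2}\|f\|^2$. Using $\|f\|\le \|L_W\|=:\epsilon$ one obtains $|R(W)|=O_n(\epsilon^2\|f\|^2)$; since sufficient quasirandomness (cut/spectral norm small) forces $\epsilon$ small, the sign of $t_D(A)-2^{-(n+1)}$ agrees with that of $c(D)$, giving the classification.

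Counting is then immediate: the number of orientations with exactly $s$ sign changes is $2\binom{n}{s}$. For odd $n$, $c(D)=2s-n$ is never zero, and the symmetry $\binom{n}{s}=\binom{n}{n-s}$ splits the $2^{n+1}$ orientations evenly between LTS ($s>n/2$) and LTAS ($s<n/2$). For even $n$, exactly $2\binom{n}{n/2}$ orientations have $c(D)=0$, giving the stated upper bound $\binom{n}{n/2}2^{-n}$ on the fraction that is neither LTS nor LTAS, and the remaining orientations split evenly, yielding the lower bound $\tfrac12(1-\binom{n}{n/2}2^{-n})$ for each of LTS and LTAS. The main obstacle is the Hessian-kernel issue in Step 3: $c(D)\|f\|^2$ is only semidefinite, so in a general graph one would need delicate higher-order analysis on the kernel $\{W:L_W\mathbf{1}=0\}$. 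What rescues the argument here is a phenomenon specific to the path — every connected interval integral reduces to a power of $L_W$ acting on $\mathbf{1}$ — so $R(W)$ collapses identically on this kernel, and no further work is needed.
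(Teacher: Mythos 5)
Your proof is correct and follows essentially the same route as the paper: the sign of the signed $P_3$-count (your $c(D)=2s(D)-n$ is, up to a factor of $-\tfrac{1}{2}$, the paper's $\mathcal{C}(P_3,D)$) governs LTS versus LTAS via the expansion about the constant tournamenton, and the fractions follow from the observation that the consecutive-edge agreements $\epsilon_i\epsilon_{i+1}$ form $n$ i.i.d.\ signs. The paper simply invokes its \Cref{thm:wedges} (built on \Cref{lem:decomposition}, \Cref{lem:path and cycle}, and \Cref{lem:lambda}) rather than re-deriving the expansion and tail estimate inline as you do, but the content is the same.
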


Turning to the more difficult global notions of Sidorenko, we characterize the TS/TAS status of all oriented paths of length at most $5$. For long paths, as in the local setting, there are few TS orientations.
\begin{theorem}[TS rarity]\label{thm:rarity}
    The fraction of oriented paths on $n$ edges that are TS is $o_n(1)$.
\end{theorem}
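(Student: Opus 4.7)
The plan is to derive necessary conditions for TS by testing against explicit tournamentons, then count the signatures satisfying them. By Proposition~\ref{prop:localwalk}, at most $\tfrac{1}{2}+o_n(1)$ of oriented paths on $n$ edges are even LTS, hence TS; the task is to strengthen this to $o_n(1)$.

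The main tool is the fully transitive tournamenton $A^*(x,y)=\mathbf{1}[x<y]$, which satisfies $t_{K_2}(A^*)=1/2$ and yields $t_{P_\epsilon}(A^*)=\ell(\epsilon)/(n+1)!$, where $\ell(\epsilon)$ counts permutations of $[n+1]$ with descent pattern equal to the signature $\epsilon\in\{+,-\}^n$. Thus TS forces
\[
\ell(\epsilon)\ge(1-o_n(1))\frac{(n+1)!}{2^n}.
\]
Using the multinomial formula $\beta_n(\subseteq D)=(n+1)!/\prod c_j!$ (where $(c_j)$ is the composition of $n+1$ induced by $D$) together with inclusion-exclusion on $D$, I would show quantitatively that this lower bound fails whenever $\epsilon$ contains a monotone run of length at least some threshold $L_0=L_0(n)$; the estimate gets easier as $L_0$ grows, since a long run forces a large chain in the induced poset and hence a large factor $c_j!$ in the denominator. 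Since a uniformly random $\epsilon\in\{+,-\}^n$ has longest monotone run of order $\log_2 n$ with high probability, an appropriate choice of $L_0\le\log_2 n$ should exclude all but a $o_n(1)$ fraction of signatures.

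If the transitive tournamenton alone is not tight enough at the borderline, I would supplement it with a family of ``block'' tournamentons $A_k$ for $k\ge 2$: partition $[0,1]$ into $k$ equal intervals, with deterministic cross-interval orientations and $1/2$ within. For such $A_k$, $t_{P_\epsilon}(A_k)$ takes a transfer-matrix form $\mathbf{1}^\top T^{(k)}_{\epsilon_1}\cdots T^{(k)}_{\epsilon_n}\mathbf{1}/k^{n+1}$ with explicit $k\times k$ upper-triangular matrices $T^{(k)}_\pm$, converting TS into a growth lower bound on a random matrix product whose Lyapunov exponent can be estimated directly.

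The main technical obstacle is making the $\ell(\epsilon)$ bound both quantitative and uniform in the position of the long monotone run and the rest of the signature: the inclusion-exclusion can introduce signed cancellations that complicate a clean bound. I expect to close this gap either by a second-moment computation on the random variable $\ell(\epsilon)\cdot 2^n/(n+1)!$ under uniform $\epsilon$, or by using the finite-$k$ transfer-matrix estimates to rule out precisely those $\epsilon$ where the transitive tournamenton inequality is only marginally satisfied.
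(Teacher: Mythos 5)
Your main approach has a quantitative gap that does not close, and your fallback is essentially the paper's method but is not carried out.

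The ``long monotone run'' bound you sketch for the transitive tournamenton is off by an exponential factor. A monotone run of length $L_0$ in $\epsilon$ forces a chain of length $L_0+1$ in the induced poset, giving only $\ell(\epsilon)\le (n+1)!/(L_0+1)!$. For this to beat the TS threshold $\ell(\epsilon)\ge (n+1)!/2^n$ you need $(L_0+1)!>2^n$, i.e.\ $L_0=\Omega(n/\log n)$. But as you note yourself, a uniformly random $\epsilon$ has longest run only $\Theta(\log_2 n)$, and $(\log_2 n)!\approx n^{\log_2\log_2 n}$ is polynomial in $n$, nowhere near $2^n$. So restricting to $\epsilon$ with a run of length $\ge L_0\le\log_2 n$ captures most signatures but proves nothing about them: the single-run bound simply does not reach the threshold. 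To salvage the transitive tournamenton you would need the \emph{cumulative} effect of all the runs, i.e.\ an estimate on the entropy rate of the ascent/descent process of iid uniforms (showing it is bounded away from $\log 2$, with concentration of $-\log p(\epsilon)$), which is a genuinely different and substantially harder argument than what you sketch; the inclusion–exclusion you mention does introduce the signed cancellations you worry about, and a second-moment bound alone will not give the $o_n(1)$ conclusion without controlling the left tail of $\ell(\epsilon)\cdot 2^n/(n+1)!$.

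Your second suggestion — block tournamentons $A_k$ with a transfer-matrix reduction to a random matrix product and a Lyapunov exponent estimate — is exactly what the paper does, with $k=2$: the paper uses the two-vertex weighted tournament $K$ with one directed edge of weight $1$ and self-loops of weight $1/2$, tracks the homomorphism count of the growing random path as a two-dimensional Markov process $(f_i,g_i)$, observes that $x_i=\tfrac12(f_i+g_i)$ satisfies the stochastic recurrence $x_i=x_{i-1}\pm\tfrac18 x_{i-2}$ (with iid signs, since $I_i=[\epsilon_i=\epsilon_{i+1}]$ are iid Bernoulli$(1/2)$), and then rigorously bounds the Lyapunov exponent of this recurrence away from zero using Furstenberg's theorem plus a support argument for the invariant distribution of the ratio $x_{i+1}/x_i$. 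So the route you flag as a backup is the one that actually works; the work is in the Lyapunov estimate, which your proposal identifies but does not supply.
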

In combination with Proposition \ref{prop:localwalk}, this implies that at least $\frac 1 2 - o_n(1)$ of all oriented paths are neither TS nor TAS. The question remains whether almost all are neither or whether many turn out to actually be TAS. Despite the implied asymmetry, we conjecture the latter

\begin{conjecture}[TAS ubiquity]\label{conj:ubiquity}
    A $(\frac 1 2 - o_n(1))$ fraction of all oriented paths of length $n$ are TAS.
\end{conjecture}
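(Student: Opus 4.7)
The starting observation is that Proposition~\ref{prop:localwalk} already implies a $\tfrac{1}{2} - o(1)$ fraction of all oriented paths are LTAS, so the conjecture reduces to showing that a $1-o(1)$ fraction of LTAS paths are globally TAS. To attack this I would use the antisymmetric decomposition $A(x,y) = \tfrac{1}{2} + B(x,y)$, where $B(x,y)=-B(y,x)$, and encode the oriented path by its sign sequence $\epsilon = (\epsilon_1,\dots,\epsilon_n) \in \{\pm 1\}^n$. Expanding the product of edge weights gives
\begin{equation*}
t_P(A) - 2^{-n} = \sum_{\emptyset \ne S \subseteq [n]} 2^{|S|-n}\Bigl(\prod_{i \in S}\epsilon_i\Bigr)\prod_{I \in \mathcal{I}(S)} m_{|I|},
\end{equation*}
where $\mathcal{I}(S)$ denotes the partition of $S$ into maximal runs of consecutive edges and $m_\ell := \langle 1, T_B^\ell 1\rangle$ with $T_B$ the skew-adjoint integral operator with kernel $B$. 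Since $T_B$ is skew-adjoint, all odd moments vanish and $m_{2k}$ has sign $(-1)^k$, so only $S$ whose intervals all have even length contribute; the leading ($|S|=2$) term evaluates to $-\|T_B 1\|^2 \cdot 2^{2-n}\sum_a \epsilon_a \epsilon_{a+1}$, whose non-positivity is precisely the LTAS condition $\sum_a \epsilon_a\epsilon_{a+1} \ge 0$ governing Proposition~\ref{prop:localwalk}.

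To promote this to global TAS, I would seek a combinatorial strengthening of the LTAS condition on $\epsilon$ that forces the entire expansion to be non-positive uniformly in $B$. Via the spectral representation $|m_{2k}| = \int \lambda^k\, d\mu(\lambda)$ for the spectral measure $\mu$ of $T_B^* T_B$ against the constant function, the TAS inequality becomes a polynomial inequality over arbitrary probability measures on $[0,\infty)$, with sign pattern determined by $\epsilon$. A natural candidate is a ballot-type inequality such as $\sum_{i=a}^{b-1} \epsilon_i\epsilon_{i+1} \ge 0$ for every $1\le a < b \le n$, which would align the signs of each contribution with the alternating factor $(-1)^{|S|/2}$ coming from the expansion. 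A classical reflection-principle count would then show that a $\tfrac{1}{2} - o(1)$ fraction of sign sequences satisfy this (or a suitably weakened) condition.

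The main obstacle is verifying that such a combinatorial condition actually suffices: while LTAS requires only the leading $m_2$ contribution to have the right sign, global TAS demands simultaneous control of all higher-order terms. Two complementary strategies seem promising. First, leveraging Theorem~\ref{thm:isopairtas}, many oriented paths contain symmetric twin sub-paths that can be iteratively pruned, reducing the problem to short paths whose TAS status can either be checked directly or handled via Theorem~\ref{thm:caterpillar}; one would then need to show that a uniformly random $\epsilon$ admits such a pruning down to a TAS core with probability $\tfrac{1}{2} - o(1)$. Second, one can attempt a direct sum-of-squares decomposition of $2^{-n} - t_P(A)$ exploiting the identity $T_B^2 = -T_B^* T_B$ together with the transfer-matrix structure of the iterated path product. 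Either way, establishing cancellations valid for all spectral measures---including those concentrated at arbitrarily large $\lambda$---is the fundamental technical difficulty and likely requires a new structural insight.
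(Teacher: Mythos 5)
This statement is Conjecture~\ref{conj:ubiquity}, which the paper explicitly leaves \emph{open}; there is no proof in the paper to compare against. What you have written is a research sketch rather than a proof, and you are candid that it does not close, so the main thing to assess is whether the framework and intermediate claims are sound.

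Your setup is essentially the paper's own machinery: the decomposition $A=\tfrac12+B$ and the expansion over $S\subseteq[n]$ is Lemma~\ref{lem:decomposition} specialized to paths, the vanishing of odd moments and the sign $(-1)^k$ of $m_{2k}$ is Lemma~\ref{lem:path and cycle}, and the identification of the $|S|=2$ term with $\mathcal{C}(P_3,P)\,t_{P_3}(B)$ (and hence with $\sum_a\epsilon_a\epsilon_{a+1}$) is exactly the content behind Theorem~\ref{thm:wedges} and Proposition~\ref{prop:localwalk}. The logical reduction at the start is also fine: since TAS $\Rightarrow$ LTAS and LTAS/LTS are essentially disjoint, if you could show that a $1-o(1)$ fraction of LTAS paths are globally TAS, the conjecture would follow. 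None of this is new relative to the paper, but it is correct.

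The gap lies in the step you flag yourself, and I want to sharpen why the specific candidate you offer cannot be the right one. The ballot-type condition ``$\sum_{i=a}^{b-1}\epsilon_i\epsilon_{i+1}\ge 0$ for every $1\le a<b\le n$'' asks that \emph{every contiguous partial sum} of the i.i.d.\ $\pm1$ sequence $\eta_i:=\epsilon_i\epsilon_{i+1}$ be nonnegative. A uniformly random $\eta$ satisfies this with probability $\Theta(n^{-1/2})$ at best (it already forces the full walk to stay nonnegative), nowhere near $\tfrac12-o(1)$, so no reflection-principle count will rescue the count without a drastic weakening that you leave unspecified. Separately, even if the condition held, it does not align with the actual sign structure of the higher-order terms: a maximal run $S=\{a,\dots,a+2k-1\}$ contributes $\prod_{i=a}^{a+2k-1}\epsilon_i$ times $(-1)^k|m_{2k}|$, and the relevant inequalities are of the form $(-1)^k\sum_a\prod_{i=a}^{a+2k-1}\epsilon_i\ge 0$, which are not consequences of nonnegativity of consecutive-pair partial sums. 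Your two fallback strategies (isomorphic-pair pruning via Theorem~\ref{thm:isopairtas}, or a sum-of-squares decomposition) are reasonable directions, but neither is carried out, and for the pruning route you would additionally need to show that a typical $\epsilon$ can be pruned to a TAS core with probability $\tfrac12-o(1)$, which is itself a substantial open combinatorial claim. In short: the framework is correct and matches the paper's, the reduction is correct, but the key combinatorial condition you propose is both quantitatively too restrictive to occur with the required frequency and not shown to imply the needed spectral inequalities, so there is no proof here — consistent with the statement remaining a conjecture.
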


A variety of new ideas are used to prove these statements, including spectral methods, entropy, and a connection to stochastic linear recurrences that has been studied extensively in probability theory \cites{visfib,goldsheid2025exponentialgrowthrandominfinite}.

\subsection{Structure of the paper}

Section \ref{sec:polyexpansion} covers the polynomial expansion and spectral methods. We use these to give new proofs of some known results and characterize short paths. \Cref{sec:local} provides characterizations and classifying algorithms in the local setting for oriented paths and cycles. Section \ref{sec:typical} proves \Cref{prop:localwalk} and Theorem \ref{thm:rarity}. Section \ref{sec:treecon} proves Theorems \ref{thm:isopairtas} and \ref{thm:caterpillar} and also takes a different approach to Conjecture \ref{conj:originaltree} by asking which undirected graphs have a TAS orientation. Proposition \ref{prop:sparse} constructs a sparse graph with no TAS orientation.

\subsection{Acknowledgements}

The authors thank David Conlon and Jacob Fox for helpful discussions, as well as Benjamin McKenna for pointing us to the literature on stochastic linear recurrences. Fan Wei was partially supported by NSF grant DMS-2401414.

\section{Polynomial expansion and spectral methods}\label{sec:polyexpansion}

We begin with a spectral argument particularly suited for counting paths and cycles. Spectral methods have previously been used to count directed cycles in tournaments \cite{cycles}. All matrices are $n\times n$ where $n$ is the number of vertices of the tournament $T$. Let $A$ be the adjacency matrix of a weighted tournament, with $1/2$ along the diagonal to indicate weighted self-loops. Let $J$ be the all-ones matrix, and observe that one may decompose $A = B + \frac 12 J$ where $B$ is skew-symmetric. The spectral theorem then implies that $B$ has an orthonormal basis of eigenvectors with eigenvalues $i\lambda$ all pure imaginary. %It follows that $B$ satisfies $B = -B^\top $ and $B^2 \le 0$ (negative semidefiniteness). 
$B$ can be viewed as the structured perturbation from the quasirandom $\frac 1 2 J$. We illustrate the general method with a previously established result.

\begin{proposition}\label{prop:p2ts}
    The path on $3$ vertices $\rightarrow \leftarrow$ is TS.
\end{proposition}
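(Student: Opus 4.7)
The plan is to apply the polynomial expansion $A = B + \tfrac{1}{2}J$ just introduced. Writing $D$ as the oriented graph with vertex set $\{x,y,z\}$ and edges $(x,y),(z,y)$, the weighted homomorphism count is
$$
h_D(A) = \sum_{x,y,z} A(x,y)\,A(z,y).
$$
Substituting $A_{uv} = B_{uv} + \tfrac{1}{2}$ and expanding the product yields four groups of terms: a constant contribution $\tfrac{n^3}{4}$, two linear-in-$B$ contributions of the form $\tfrac{1}{2}\sum_{x,y,z} B_{xy}$ and $\tfrac{1}{2}\sum_{x,y,z} B_{zy}$, and the quadratic contribution $\sum_{x,y,z} B_{xy}\,B_{zy}$.

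The two linear terms vanish because $B$ is skew-symmetric, so $\sum_{x,y} B_{xy} = 0$. The quadratic term regroups over the shared endpoint $y$ as
$$
\sum_{x,y,z} B_{xy}\,B_{zy} \;=\; \sum_y \Bigl(\sum_x B_{xy}\Bigr)\Bigl(\sum_z B_{zy}\Bigr) \;=\; \|B^\top \mathbf{1}\|_2^2 \;\ge\; 0,
$$
so it is a manifestly nonnegative sum of squares measuring the in-imbalance at each vertex. Combining the three pieces gives $h_D(A) \ge n^3/4$, and dividing by $n^3$ yields $t_D(A) \ge \tfrac{1}{4}$. Since $t_{K_2}(G) = \tfrac{1}{2} - \tfrac{1}{2n}$ satisfies $t_{K_2}(G)^{e(D)} = t_{K_2}(G)^2 = \tfrac{1}{4} - O(1/n)$, the desired bound \eqref{eq:siddef} follows with the $(1-o(1))$ factor absorbing the lower-order term.

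There is essentially no obstacle here: the argument is a one-line application of skew-symmetric cancellation followed by the observation that what remains is a sum of squares. Equivalently, by Cauchy--Schwarz, $\sum_y d_y^2 \ge (\sum_y d_y)^2/n$ applied to the in-weights $d_y = \sum_x A(x,y)$ recovers the same inequality in one step. The value of this derivation is as a warm-up illustrating the general spectral/polynomial-expansion strategy: write $A = B + \tfrac{1}{2}J$, expand $h_D(A)$ in powers of $B$, kill the odd-degree-in-$B$ sums using skew-symmetry, and control the remaining even-degree terms by nonnegativity or spectral bounds.
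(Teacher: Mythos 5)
Your proof is correct and follows essentially the same route as the paper: expand $A=\tfrac12 J+B$, kill the odd-in-$B$ terms by skew-symmetry, and observe the remaining quadratic term is a sum of squares. The paper phrases the last step as $-1^\top B^2 1\ge 0$ via negative semidefiniteness of $B^2$, while you write it entrywise as $\|B^\top\mathbf{1}\|_2^2\ge 0$; these are the same quantity and the same observation. (Your closing remark that the whole thing follows from Cauchy--Schwarz on the in-weights $d_y$ is a nice even shorter alternative, and your $(1-o(1))$ bookkeeping is unnecessary since $1/4$ already exceeds $t_{K_2}(G)^2$ exactly, but neither point is an error.)
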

\begin{proof}
    Let $A$ be the adjacency matrix of a tournament $T$ and $P = \rightarrow \leftarrow$. Expanding about the quasirandom part we obtain
    \begin{align}\label{eq:expansion}
    h_P(A) = 1^\top  AA^\top  1 &= 1^\top  \bigp{\frac 12 J + B}\bigp{\frac 12 J - B} 1 \\
    &= \frac 1 4 1^\top  J^2 1 - \frac 12 1^\top  JB 1 + \frac 12 1^\top  BJ 1 - 1^\top  B^2 1 = \frac{n^3}{4} - 1^\top  B^2 1. \nonumber
    \end{align}
    Since $B^2$ is negative semidefinite the final count is at least $n^3/4$, as desired.
\end{proof}

The method is able to handle more interesting oriented paths that previous methods cannot. For example, the path $\rightarrow\rightarrow\leftarrow\leftarrow$ can be proved to be TAS (Proposition \ref{prop:signflip}). Using the polynomial expansion method we can characterize all paths of length at most 5 (Table \ref{tab:paths}). Proofs are omitted, but only require inequality (iii) of Lemma \ref{lemma:estimating X}, skew symmetry of $B$, and that $B$ has a spectral radius of at most $n/2$.

\begin{table}[h]
\centering
\begin{minipage}{.45\linewidth}
\centering
\begin{tabular}{c|c}
$\rightarrow$         & Impartial    \\
$\rightarrow\leftarrow$         & TS      \\
    $\rightarrow\rightarrow$         & TAS       \\
  $\rightarrow\rightarrow\rightarrow$         & TAS       \\
  $\leftarrow\rightarrow\rightarrow$         & Impartial \\
  $\leftarrow\rightarrow\leftarrow$          & TS        \\
  $\rightarrow\rightarrow\rightarrow\rightarrow$ & TAS   \\
  $\rightarrow\rightarrow\rightarrow\leftarrow$   & TAS   \\
  $\rightarrow\rightarrow\leftarrow\rightarrow$   & TS    \\
  $\rightarrow\rightarrow\leftarrow\leftarrow$    & TAS   \\
  $\rightarrow\leftarrow\rightarrow\leftarrow$    & TS 
\end{tabular}
\end{minipage}%
\hspace{0.06\linewidth}% adjust spacing as desired
\begin{minipage}{.45\linewidth}
\centering
\begin{tabular}{c|c}
  $\rightarrow\leftarrow\leftarrow\rightarrow$    & TS    \\
  $\rightarrow\rightarrow\rightarrow\rightarrow\rightarrow$ & TAS \\
  $\rightarrow\rightarrow\rightarrow\rightarrow\leftarrow$   & TAS \\
  $\rightarrow\rightarrow\rightarrow\leftarrow\rightarrow$   & TAS \\
  $\rightarrow\rightarrow\leftarrow\rightarrow\rightarrow$   & TAS \\
  $\rightarrow\rightarrow\rightarrow\leftarrow\leftarrow$    & TAS \\
  $\rightarrow\rightarrow\leftarrow\rightarrow\leftarrow$    & TS  \\
  $\rightarrow\leftarrow\rightarrow\rightarrow\leftarrow$    & TS  \\
  $\leftarrow\rightarrow\rightarrow\rightarrow\leftarrow$    & TAS \\
  $\rightarrow\rightarrow\leftarrow\leftarrow\rightarrow$    & TS  \\
  $\rightarrow\leftarrow\rightarrow\leftarrow\rightarrow$    & TS  
\end{tabular}
\end{minipage}
\caption{Sidorenko classification of all paths of length at most $5$.}
\label{tab:paths}
\end{table}

We also find the shortest path that is neither TAS nor TS.
\begin{proposition}
    The path with 6 edges $\rightarrow \leftarrow \rightarrow \rightarrow \rightarrow \leftarrow$ is neither TAS nor TS.
\end{proposition}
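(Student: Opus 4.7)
The plan is to exhibit two tournamentons (each approximable by genuine tournaments) with $t_P(A)$ strictly below and strictly above $1/64 = (1/2)^{e(P)}$ respectively, defeating the TS and TAS inequalities in turn.

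To defeat TS, I would use the transitive tournament $T_n$, for which $t_P(T_n)$ converges (as $n \to \infty$) to $L/7!$, where $L$ counts the linear extensions of the partial order on the seven path vertices induced by the orientations: the chain $v_2 < v_3 < v_4 < v_5$ together with the extra relations $v_0 < v_1$, $v_2 < v_1$, and $v_6 < v_5$. A short recursive enumeration (peeling off minimal elements) yields $L = 78$, so $t_P(T_n) \to 78/5040$. Since $78 \cdot 64 = 4992 < 5040$, we have $78/5040 < 1/64$, defeating TS.

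To defeat TAS, I would perturb around the quasirandom tournamenton using a two-block tournamenton: split the vertex measure into halves $L$ and $R$, assign intra-block density $1/2$, and set the cross-block density $A(L, R) = 1/2 + s$ for small $s$. Expanding $t_P(A)$ as a polynomial in $s$, the zeroth-order term equals $1/64$ and the linear term vanishes by the $L \leftrightarrow R$ flip symmetry. The quadratic coefficient is a sum over unordered pairs of edges $(e, e')$ of $\mathbb{E}[c_e c_{e'}]$, where $c_e \in \{-1, 0, +1\}$ records whether edge $e$ crosses the block boundary and in which direction. A short case analysis shows this expectation is $+1/4$ when $e$ and $e'$ share a head or share a tail, $-1/4$ when the head of one is the tail of the other, and $0$ otherwise. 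For $P$, the five adjacent edge-pairs $(e_1,e_2),(e_2,e_3),(e_3,e_4),(e_4,e_5),(e_5,e_6)$ contribute $+1/4,+1/4,-1/4,-1/4,+1/4$ (head-share, tail-share, head-to-tail, head-to-tail, head-share), totaling $+1/4$. Thus $t_P(A) = 1/64 + s^2/64 + O(s^3) > 1/64$ for small nonzero $s$, defeating TAS.

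The main obstacle is the bookkeeping in both steps: enumerating the 78 linear extensions (straightforward but error-prone, best done by recursion on minimal elements) and correctly classifying each adjacent edge-pair configuration in $P$ so as to track the right sign in the quadratic expansion. Neither step requires new techniques beyond the polynomial expansion already developed in Section~\ref{sec:polyexpansion}.
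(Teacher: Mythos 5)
Your proposal is correct and takes a genuinely different route from the paper. The paper defeats TAS with the weighted transitively oriented triangle (three blocks with self-loops weighted $1/2$, blown up by quasirandom tournaments) and defeats TS with a slightly perturbed cyclically oriented triangle, in both cases comparing a numerically computed $h_P(\mathcal{T})$ against $3^7/2^6$. You instead defeat TS with the unweighted transitive tournament via the linear-extension count $L = 78 < 5040/64 = 78.75$; this is exact and avoids numerics, and it is a nice contrast that the pure transitive tournament pushes $t_P$ below $1/64$ while the paper's transitive triangle with quasirandom blowups pushes it above. Your not-TAS argument via a two-block near-quasirandom perturbation is conceptually tied to the paper's own local theory: it amounts to the observation that $P$ has two flow-through wedges and three co-directed wedges, so $\mathcal{C}(P_3,P) < 0$ and hence $P$ is LTS by \Cref{thm:wedges}, and the strictly positive quadratic coefficient $s^2/64$ you compute shows the local Sidorenko inequality is strict near the quasirandom tournamenton, which already refutes TAS. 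Two small remarks: the remainder in your quadratic expansion is in fact $O(s^4)$ rather than $O(s^3)$, since $t_{D'}(B)=0$ whenever the subgraph $D'$ of the path has an odd number of edges, by skew-symmetry of $B$ (\Cref{lem:path and cycle}); and both the classification of the five adjacent edge-pairs and the count $L = 78$ check out.
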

\begin{proof}
    Consider the transitively oriented triangle $\cT$ with edge weights $1$ and self-loops of weight $1/2$. Then the weighted homomorphism count of our oriented path $P$ into $T$ can be calculated as
    $$
    h_P(\cT) = 1^\top A A^\top A^3 A^\top 1, \quad A = \begin{pmatrix}
        1/2 & 1 & 1\\
        0 & 1/2 & 1\\
        0 & 0 & 1/2
    \end{pmatrix}.
    $$
    It can be explicitly computed that $h_P(\cT) \approx 36.05 > 34.171875 = \frac{3^7}{2^6}$ so $P$ is not TAS. Note that if one wanted a sequence of unweighted tournaments providing a counterexample to our first formulation of TAS it suffices to simply take $\cT_n$ on $n$ vertices attained by blowing up the transitively oriented triangle with quasirandom tournaments on $n/3$ vertices. In general any weighted construction with self-loops of weight $1/2$ can be turned into a sequence with quasirandom blowups.

    For the other direction let $\cT$ now be the cyclically oriented triangle with self-loops weighted $1/2$ but perturbed slightly so that one edge has weight $0.01$ in the reverse direction and $0.99$ in the cyclic direction, with the other two edges still being weight $1$. That is
    $$
    h_P(\cT) = 1^\top A A^\top A^3 A^\top 1, \quad A = \begin{pmatrix}
        1/2 & 0.99 & 0\\
        0.01 & 1/2 & 1\\
        1 & 0 & 1/2
    \end{pmatrix}.
    $$
    Then $h_P(\cT) \approx 34.17178 < 34.171875 = \frac{3^7}{2^6}$ so $P$ is not TS.
\end{proof}

More generally, we make the following conjecture.
\begin{conjecture}[Direction flip]\label{conjecture:one flip}
    For all $k \ge 3$, an oriented path with $k$ edges and one direction flip ($\rightarrow\dots \rightarrow \leftarrow\dots \leftarrow$) is TAS.
\end{conjecture}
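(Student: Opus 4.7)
The plan is to leverage the recursion $u_{j+1} = s_j \cdot 1 - A u_j$, an immediate consequence of $A^\top = J - A$, where $u_j := (A^\top)^j 1$ and $s_j := 1^\top A^j 1$ is the straight-path homomorphism count. Since $h_P(A) = 1^\top A^\alpha (A^\top)^\beta 1 = \langle u_\alpha, u_\beta \rangle$, taking the inner product of the recursion with $u_\beta$ gives the two-step identity $\langle u_{j+1}, u_\beta\rangle + \langle u_j, u_{\beta+1}\rangle = s_j s_\beta$. Iterating this in $j = 0, 1, \ldots, \alpha-1$ would yield the closed-form expression
\[
h_P(A) = \sum_{i=0}^{\alpha-1} (-1)^i s_{\alpha-1-i}\, s_{\beta+i} + (-1)^\alpha s_k.
\]

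Next I would write $s_j = M_j - \epsilon_j$ where $M_j := n^{j+1}/2^j$ and $\epsilon_j \ge 0$ is the SSZ gap, known to be non-negative by the Sah--Sawhney--Zhao theorem that straight directed paths are TAS. Substituting and noting that $M_{\alpha-1-i} M_{\beta+i} = n^{k+1}/2^{k-1}$ independent of $i$, one checks that the pure ``$M \cdot M$'' contributions combine with $(-1)^\alpha M_k$ to give exactly $n^{k+1}/2^k$ regardless of the parity of $\alpha$. What remains is the desired gap
\[
\Delta(\alpha,\beta) := \tfrac{n^{k+1}}{2^k} - h_P(A) = \sum_{i=0}^{\alpha-1}(-1)^i\bigl[M_{\alpha-1-i}\epsilon_{\beta+i} + \epsilon_{\alpha-1-i}M_{\beta+i} - \epsilon_{\alpha-1-i}\epsilon_{\beta+i}\bigr] + (-1)^\alpha \epsilon_k,
\]
and the goal reduces to showing $\Delta(\alpha,\beta) \ge 0$.

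To establish this, I would expand each $\epsilon_j$ using the polynomial expansion of $A^j = (J/2 + B)^j$ from \Cref{sec:polyexpansion}, together with the skew-symmetric identities $1^\top B^{2m+1} 1 = 0$ and $1^\top B^{2m} 1 = (-1)^m \|B^m 1\|^2$. Each $\epsilon_j$ then becomes an explicit signed sum over words in $\{J, B\}^j$ whose $B$-blocks all have even length, with contributions of the form $\prod_\ell (-1)^{\gamma_\ell} \|B^{\gamma_\ell} 1\|^2$ scaled by appropriate powers of $n/2$. Substituting these into $\Delta$ yields an expression purely in the quantities $\|B^m 1\|^2$, which the base cases $k \le 5$ (and the verified examples in \Cref{tab:paths}) suggest can be rearranged into a manifestly non-negative combination of the differences $(n^2/4)\|B^{m-1} 1\|^2 - \|B^m 1\|^2 \ge 0$, the latter following from the spectral bound $\|B\|_{\mathrm{op}} \le n/2$.

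The main obstacle is the sign-cancellation analysis in this final step, verifying non-negativity uniformly in $k$. For $k = 3$ one checks $\Delta \equiv 0$ directly (the one-flip path is actually impartial, so trivially TAS); for $k \in \{4, 5\}$ the calculation reduces to a single inequality of the form $\|B^2 1\|^2 \le (n^2/4)\|B 1\|^2$ plus a collection of manifestly non-negative contributions. Extending to general $k$ would require organizing the combinatorics of the expansion carefully, perhaps by an induction on $k$ carrying an inductive hypothesis about the structure of $\Delta$ in terms of the $\|B^m 1\|^2$'s, or alternatively by a generating-function-based closed-form identity for $1^\top(I-zA)^{-1}(I-wA^\top)^{-1}1$ that reveals a sum-of-squares decomposition making non-negativity transparent.
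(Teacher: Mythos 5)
The statement you are proving is posed in the paper only as a conjecture, and the paper's own result (Proposition~\ref{prop:signflip}, proved in Appendix~\ref{sec:signflipapp}) covers only $3 \le k \le 7$ by directly expanding $1^\top A^\alpha (A^\top)^\beta 1$ in the quantities $X_{2t} = |1^\top B^{2t}1|$ and applying the inequalities of Lemma~\ref{lemma:estimating X}. Your route is genuinely different and, as far as it goes, correct: the recursion $u_{j+1} = s_j\cdot 1 - A u_j$ from $A^\top = J - A$ is valid, the telescoped closed form
\[
h_P(A) = \sum_{i=0}^{\alpha-1}(-1)^i s_{\alpha-1-i}\,s_{\beta+i} + (-1)^\alpha s_k
\]
is correct, the observation that $M_{\alpha-1-i}M_{\beta+i} = n^{k+1}/2^{k-1}$ is independent of $i$ and that the pure $M\cdot M$ terms combine with $(-1)^\alpha M_k$ to give $n^{k+1}/2^k$ in both parities of $\alpha$ is correct, and the resulting reduction to $\Delta(\alpha,\beta)\ge 0$ is a clean restatement. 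I checked it reproduces the paper's formulas: for $(\alpha,\beta)=(1,2)$ one gets $\Delta\equiv 0$ (impartiality), for $(1,3)$ one gets $\Delta=\frac{n^2}{4}X_2+X_4$, and for $(2,2)$ one gets $\Delta=\frac{n^2}{4}X_2-X_4$. The paper works directly with $B$; you instead write everything in terms of the straight-path counts $s_j$ and import the non-negativity of the gaps $\epsilon_j = n^{j+1}/2^j - s_j$ from the known TAS property of directed paths. This is an attractive structural reformulation that the paper does not use.

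However, the proposal is not a proof: the essential content of the conjecture, $\Delta(\alpha,\beta)\ge 0$, is exactly the step you leave open. Your $\Delta$ has alternating signs, so $\epsilon_j\ge 0$ alone gives nothing, and your suggestion that the expression rearranges into a non-negative combination of differences $\frac{n^2}{4}\|B^{m-1}1\|^2 - \|B^m 1\|^2$ is verified only for the small cases the paper already handles. The paper explicitly remarks, at the end of Appendix~\ref{sec:signflipapp}, that its lemma inequalities ``do not seem to be sufficient'' already at $k=8$, and your $\Delta$ involves the same ingredients (powers of $n/2$ and the quantities $X_{2t}$, after expanding the $\epsilon_j$), so the reformulation transfers the difficulty rather than resolving it. To close the gap you would need a new input — a stronger family of inequalities on the $X_{2t}$ than Lemma~\ref{lemma:estimating X} provides, or the sum-of-squares/generating-function identity you gesture at — and neither is produced. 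As a structural observation and a partial reduction this is genuinely useful, but it does not establish the conjecture for general $k$.
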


Note that it cannot hold for $k=2$ by Proposition \ref{prop:p2ts}. The polynomial expansion approach can be used to confirm this conjecture for paths of length up to $7$. The proof is deferred to Appendix \ref{sec:signflipapp}.

\begin{proposition}\label{prop:signflip}
For all $3\le k\le 7$, an oriented path with $k$ edges and one direction flip is TAS.
\end{proposition}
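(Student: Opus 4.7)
The plan is to execute the polynomial-expansion method of Proposition~\ref{prop:p2ts} as a finite case analysis in $(k,i)$. Write $A = \tfrac12 J + B$ with $B$ skew-symmetric. A one-flip path $P$ with $i$ forward then $k-i$ backward edges has
\[
    h_P(A) = 1^\top \bigl(\tfrac12 J + B\bigr)^i \bigl(\tfrac12 J - B\bigr)^{k-i} 1.
\]
Distributing yields $2^k$ terms, and since $J = 1 \cdot 1^\top$ each $J$-factor decouples the product. Indexing by the set $S \subseteq [k]$ of positions at which we select $\tfrac12 J$ (rather than the corresponding $\pm B$), the $S$-term factors across the maximal $B$-runs of $\bar S$ as
\[
    \frac{\sigma_S}{2^{|S|}} \prod_j X_{r_j(S)}, \qquad X_r := 1^\top B^r 1, \quad X_0 := n,
\]
with sign $\sigma_S = (-1)^{|\bar S \cap \{i+1,\dots,k\}|}$. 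The all-$J$ term $S = [k]$ contributes exactly $n^{k+1}/2^k$, matching the target.

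Skew symmetry of $B$ gives $X_1 = 0$, so every term containing a singleton $B$-run vanishes, and $X_2 = -\|B^\top 1\|_2^2 \le 0$. Hence proving that $P$ is TAS reduces to showing the finite sum
\[
    \sum_{\substack{S \subsetneq [k] \\ \bar S \text{ has no singleton run}}} \frac{\sigma_S}{2^{|S|}} \prod_j X_{r_j(S)} \ \le\ 0.
\]
By reversal symmetry we may assume $i \le \lfloor k/2 \rfloor$, leaving eleven cases across $3 \le k \le 7$; those with $k \le 5$ are already recorded in Table~\ref{tab:paths}, so only the six cases with $k \in \{6,7\}$ are genuinely new. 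For each I would (1) enumerate the contributing $S$'s and write the sum explicitly as a polynomial in $X_2, X_3, \dots, X_k$ and $n$; (2) apply inequality (iii) of Lemma~\ref{lemma:estimating X} together with the spectral bound $\rho(B) \le n/2$ to dominate each $|X_r|$ with $r \ge 3$ by a product of $|X_2|$ and powers of $n$; (3) verify that the resulting univariate inequality in $|X_2| \in [0, n^3/4]$ is non-positive on its entire range.

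The main obstacle is step (3) for $k = 7$, where the expansion contains many terms of mixed sign and a term-by-term triangle inequality loses too much. The key structural feature to exploit is that in a one-flip path the edge sign $\epsilon_j$ changes exactly once, so every $B$-run in $\bar S$ is either purely forward, purely backward, or straddles the unique flip point, and the sign $\sigma_S$ depends only on the parity of backward positions contained in the straddling runs. This partitions the expansion into a small number of structurally uniform groups; within each group the contributions share a common pattern of signs and run-lengths, so after the moment bounds of step (2) they combine into a manifestly non-positive expression in $|X_2|$. The appendix then consists of carrying out this grouping and verification explicitly for each of the six new cases.
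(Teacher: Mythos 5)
Your framework is exactly the paper's: write $A = \tfrac12 J + B$, distribute, factor each term across maximal $B$-runs using $J = 1\cdot 1^\top$, and reduce to a polynomial identity in the quantities $X_r = 1^\top B^r 1$ finished by skew-symmetry and the spectral bound $\rho(B) \le n/2$ (Lemma~\ref{lemma:estimating X}). Your $S$-indexed factorization, the sign $\sigma_S$, and the observation that odd-length runs vanish are all correct, and by reversal symmetry eleven cases indeed suffice. But the proposal stops short of a proof in two ways, one of which is an actual error rather than merely an omission.

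Step~(2) as stated --- dominate every $|X_r|$ with $r \ge 3$ by a product of $|X_2|$ and powers of $n$, then check a univariate inequality in $|X_2|$ on $[0,n^3/4]$ --- fails already at $k = 6$, $i = 3$. For $\rightarrow\rightarrow\rightarrow\leftarrow\leftarrow\leftarrow$ the expansion (in the paper's normalization $X_{2t} = |1^\top B^{2t} 1| \ge 0$) reads
\[
\frac{n^7}{64} - \frac{3n^4}{16}X_2 + \frac{3n}{4}X_2^2 - \frac{n^2}{4}X_4 + X_6,
\]
and if you replace $X_6 \le \tfrac{n^4}{16}X_2$ and discard $-\tfrac{n^2}{4}X_4 \le 0$, you are left needing $\tfrac{3n}{4}X_2^2 \le \tfrac{2n^4}{16}X_2$, i.e.\ $X_2 \le n^3/6$, which is false when $X_2$ is near its maximum $n^3/4$. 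The cancellation that works is $X_6 \le \tfrac{n^2}{4}X_4$ paired against the $-\tfrac{n^2}{4}X_4$ term, with the $X_2$-terms then handled separately by $X_2 \le n^3/4$. So the reduction cannot always be pushed down to $X_2$; it must sometimes stop at $X_4$, and the choice of which bound to apply to which term is case-dependent.

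Step~(3) is a placeholder, not an argument. You concede that a term-by-term triangle inequality loses too much at $k = 7$ and propose a ``grouping by run structure'' that supposedly produces a ``manifestly non-positive'' expression, but the grouping is never specified and the resulting cancellations are never exhibited. The paper's proof is a genuine case-by-case verification of eleven inequalities, each with its own matching of positive against negative terms; and the appendix notes explicitly that the same method fails at $k = 8$, which shows the cancellations at $k = 6,7$ are delicate enough that they cannot be asserted --- they have to be displayed.
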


\section{Locally tournament Sidorenko and anti-Sidorenko}\label{sec:local}

The polynomial expansion approach is especially well suited to studying local Sidorenko properties. For the ``local'' part to make sense one needs to equip the space of graphs with a distance, and in \cite{loclov} this is taken to be the cut distance. The \textit{cut distance} between $A,C \in \text{Mat}_n([0,1])$ is $\norm{A - C}_\Box$ where the cut norm is defined for $B \in \text{Mat}_n([-1,1])$ as
$$
\norm{B}_\Box \coloneqq \max_{X,Y \subset [n]} \frac{\abs{\sum_{x \in X, y \in Y} B(x,y)}}{n^2}.
$$

It is technically easier for us to replace cut norm with a spectral notion of smallness and consider a weighted tournament $A = \frac 1 2 + B$ on $n$ vertices ``close to quasirandom'' if $B$ has small spectral radius, where we use $\frac 1 2 = \frac 1 2 J$ as shorthand when clear from context. That is, letting $\lmax(B)$ be the largest modulus among the eigenvalues of $B$, we require $\lmax(B) \le \eps n$ for some small $\eps > 0$. While this is a quantitatively different notion of quasirandomness than cut norm, it leads to an equivalent definition of a graph being locally tournament Sidorenko or anti-Sidorenko. Since $B$ is skew symmetric the spectral radius coincides with the spectral norm or largest singular value, and this is known to be controlled above and below by the cut norm. That is, since $B$ is skew symmetric with entries in $[-\frac 1 2, \frac 1 2]$, it holds that
$$
n\norm{B}_\Box \le \lmax(B) \le n\sqrt{2\norm{B}_\Box}
$$
by \cite{nikiforov2009cutnormsspectramatrices}*{Theorem 3}. The cut norm is in turn equivalent up to a factor of $2$ to the measure of quasirandom direction used in \cite{fox2024variationssidorenkosconjecturetournaments} to study the forcing property of tournaments. Thus the following definition remains unchanged if one replaces $\lmax(B) \le \eps n$ with $\norm{B}_\Box \le \eps$ or requires that $B$ have $\eps$-quasirandom direction. 

\begin{definition}
    A directed graph $D$ is \textit{locally tournament Sidorenko} (LTS) if there exists $\eps = \eps(D)$ such that for any $n$ and weighted tournament $A = \frac 1 2 + B$ on $n$ vertices with $\lmax(B) \le \eps n$ it holds that
    $$
    t_D(A) \ge 2^{-e(D)},
    $$
    and \textit{locally tournament anti-Sidorenko }(LTAS) if for all such $A$
    $$
    t_D(A) \le 2^{-e(D)}
    $$
    holds.
\end{definition}

\subsection{Characterization of paths}\label{sec:algorithm}

This subsection provides conditions under which a path $D$ is LTS, LTAS, or neither. These conditions involve checking certain signed subgraph counts of $D$. For any directed graphs $D$ and $G$, let $\mathcal{C}_{\textrm{odd}}(G,D)$ be the number of injective functions $f:V(G)\rightarrow V(D)$ such that, for any $(x,y)\in E(G)$, either $(f(x),f(y))\in E(D)$ or $(f(y),f(x))\in E(D)$ and furthermore the number of $(x,y)\in E(G)$ with $(f(y),f(x))\in E(D)$ is odd. Similarly, let $\mathcal{C}_{\textrm{even}}(G,D)$ be the number of injective functions $f:V(G)\rightarrow V(D)$ such that, for any $(x,y)\in E(G)$, either $(f(x),f(y))\in E(D)$ or $(f(y),f(x))\in E(D)$ and the number of $(x,y)\in E(G)$ with $(f(y),f(x))\in E(D)$ is even. Finally, let $\mathcal{C}(G,D)=\mathcal{C}_{\textrm{even}}(G,D)-\mathcal{C}_{\textrm{odd}}(G,D)$. Let $P_n$ be the path oriented left to right on $n$ vertices, $C_n$ be the cycle oriented clockwise on $n$ vertices, and for any directed graph $D$ let $mD$ be the disjoint union of $m$ copies of $D$.

In case $|V(D)| \not\equiv 0 \pmod 4$, we achieve a full characterization that is best summarized in \Cref{alg:paths}.
\begin{algorithm}
\caption{Classify an oriented path $D$ with $|V(D)| \not\equiv 0 \pmod 4$}
\label{alg:paths}
\begin{algorithmic}[1]
\Require An oriented path $D$ with $|V(D)| \not\equiv 0 \pmod 4$.
\Ensure One of \texttt{LTS}, \texttt{LTAS}, or \texttt{Neither}.

\State Compute $\mathcal{C}(P_3, D)$.

\If{$\mathcal{C}(P_3, D) > 0$}
    \State \Return \texttt{LTAS} \Comment{By Theorem~\ref{thm:wedges}}
\ElsIf{$\mathcal{C}(P_3, D) < 0$}
    \State \Return \texttt{LTS} \Comment{By Theorem~\ref{thm:wedges}}
\Else
    \Statex \Comment{$\mathcal{C}(P_3, D) = 0$, so $\ell$ is even and $\ell \equiv 2 \pmod 4$}
    \State Compute $\mathcal{C}(P_5, D)$, $\mathcal{C}(2P_3, D)$ and the minimum $k$ such that $\C(P_{2k+1},D)\not=0$.
    \State Verify that $\mathcal{C}(P_5, D) \neq -\mathcal{C}(2P_3, D)$. \Comment{Guaranteed by Lemma~\ref{lem:P5notequal2P3}}
    \If{($\mathcal{C}(P_5, D) > 0$ and $\mathcal{C}(P_5, D) > -\mathcal{C}(2P_3, D)$) or ($\mathcal{C}(P_5, D) = 0$ and $\mathcal{C}(2P_3, D) > 0$ and ($k$ does not exist or $(-1)^k\C(P_{2k+1},D)>0$))}
        \State \Return \texttt{LTS} \Comment{By Theorem~\ref{thm:countingP5and2P3} and Theorem~\ref{thm:2P3}}
    \ElsIf{($\mathcal{C}(P_5, D) < 0$ and $\mathcal{C}(P_5, D) < -\mathcal{C}(2P_3, D)$) or ($\mathcal{C}(P_5, D) = 0$ and $\mathcal{C}(2P_3, D) < 0$ and ($k$ does not exist or $(-1)^k\C(P_{2k+1},D)<0$))}
        \State \Return \texttt{LTAS} \Comment{By Theorem~\ref{thm:countingP5and2P3} and Theorem~\ref{thm:2P3}}
    \Else
        \State \Return \texttt{Neither} \Comment{By Theorem~\ref{thm:countingP5and2P3} and Theorem~\ref{thm:2P3}}
    \EndIf
\EndIf
\end{algorithmic}
\end{algorithm}
The remainder of this subsection proves the conditions that justify its correctness. The following lemma allows us to write the homomorphism density of a digraph into a tournamenton as a linear combination of the densities of its subgraphs in the corresponding skew-symmetric kernel. This technique has precedent in the undirected setting~\cite{loclov,locfox,locfox}.
\begin{lemma}\label{lem:decomposition}
Let $D$ be a digraph and $W$ be a tournamenton. Decompose $W = \frac{1}{2}J + B$. Then
$$
t_D(W)=2^{-e(D)}+\sum_{D'\subseteq D}2^{-e(D)+e(D')}t_{D'}(B).
$$
where the sum ranges over nonempty subgraphs $D' \subseteq D$ on the same vertex set, ie $V(D) = V(D')$.
\end{lemma}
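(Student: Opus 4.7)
The plan is a direct distributive expansion. Starting from the definition \eqref{eq:hom-density} and substituting $W(x,y) = \tfrac{1}{2} + B(x,y)$ for every edge, I would write
$$
t_D(W) = \int \prod_{(u,v)\in E(D)} \left(\tfrac{1}{2} + B(x_u,x_v)\right) \prod_{x\in V(D)} dx.
$$
Expanding the product edge-by-edge gives a sum over subsets $S \subseteq E(D)$, where on edges inside $S$ we select the $B$ factor and on edges outside $S$ we select the constant $\tfrac{1}{2}$:
$$
t_D(W) = \sum_{S \subseteq E(D)} 2^{-(e(D)-|S|)} \int \prod_{(u,v)\in S} B(x_u,x_v) \prod_{x\in V(D)} dx.
$$

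For each $S$, let $D' = (V(D), S)$ be the spanning subgraph of $D$ with edge set $S$. By definition \eqref{eq:hom-density}, the remaining integral is exactly $t_{D'}(B)$; the convention that $D'$ carries the full vertex set $V(D)$ matches the integration over all $|V(D)|$ variables, with any isolated vertices of $D'$ contributing a factor $\int dx = 1$. The empty subset $S = \emptyset$ contributes the constant term $2^{-e(D)}$, while the remaining subsets — parameterized by nonempty spanning subgraphs $D' \subseteq D$ — contribute exactly $\sum_{D'} 2^{-e(D)+e(D')} t_{D'}(B)$, matching the claimed formula.

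There is no real obstacle here; the statement is a bookkeeping identity. The only point that deserves emphasis is the spanning-subgraph convention $V(D') = V(D)$, because otherwise the factors of $\tfrac{1}{2}$ from unused edges would be mis-counted and the later applications (in which isolated vertices of $D'$ are permissible and integrate trivially) would no longer line up. Once that convention is fixed, interchanging the finite sum over $S$ with the integral is immediate and the identity follows.
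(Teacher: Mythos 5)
Your proof is correct and takes essentially the same approach as the paper: substitute $W = \tfrac{1}{2}J + B$ into the definition of $t_D(W)$ and expand the resulting product over edges. The paper leaves the expansion as a one-line remark, while you spell out the sum over edge subsets and the spanning-subgraph convention; the extra detail is fine but the argument is the same.
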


\begin{proof}
By equation (\ref{eq:hom-density}) and $W=\frac{1}{2}J+B$,
$$
t_D(W)=\int_{[0,1]^{m}}\prod_{(x_1,x_2)\in E(D)}\bigp{\frac{1}{2}+B(x_{1},x_{2})}\prod_{x\in V(D)}dx. 
$$
The desired equation follows by expanding the product above.
\end{proof}

\begin{lemma}\label{lem:path and cycle}
Let $B$ be a skew-symmetric kernel. If $n$ is odd, then $t_{P_{n+1}}(B)=t_{C_n}(B)=0$; if $n\equiv2\mod 4$, then $t_{P_{n+1}}(B)\le 0$ and $t_{C_n}(B)\le 0$; if $n\equiv0\mod 4$, then $t_{P_{n+1}}(B)\ge 0$ and $t_{C_n}(B)\ge 0$.
\end{lemma}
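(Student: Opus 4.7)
The plan is to view the skew-symmetric kernel $B$ as a compact integral operator on $L^2([0,1])$ and exploit its adjoint structure. Since $B(x,y)=-B(y,x)$, the operator satisfies $B^\ast=-B$, where $^\ast$ denotes the Hilbert space adjoint. As $B$ is bounded (valued in $[-\tfrac12,\tfrac12]$ in the tournamenton setting), it is Hilbert--Schmidt, so $B^k$ is trace class for every $k\ge 2$. Iterating $(Bf)(x)=\int B(x,y)f(y)\,dy$ starting from the constant function $\mathbf{1}$ and using the standard integral formula for the trace, I rewrite
$$
t_{P_{n+1}}(B)=\langle \mathbf{1},B^{n}\mathbf{1}\rangle, \qquad t_{C_n}(B)=\operatorname{tr}(B^{n}).
$$

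For odd $n$, the operator $B^n$ is itself skew-adjoint since $(B^n)^\ast=(B^\ast)^n=(-B)^n=-B^n$. For the path this gives $\langle \mathbf{1},B^n\mathbf{1}\rangle=\langle (B^n)^\ast \mathbf{1},\mathbf{1}\rangle=-\langle \mathbf{1},B^n\mathbf{1}\rangle$, forcing $t_{P_{n+1}}(B)=0$. For the cycle, the same identity yields $\operatorname{tr}(B^n)=\operatorname{tr}((B^n)^\ast)=-\operatorname{tr}(B^n)=0$.

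For even $n=2m$, the key algebraic identity is
$$
B^{2m}=(-1)^m (B^\ast)^m B^m,
$$
which is immediate from $(B^\ast)^m=(-1)^m B^m$. Pairing with $\mathbf{1}$ and taking the trace, respectively, gives
$$
t_{P_{2m+1}}(B)=\langle \mathbf{1},B^{2m}\mathbf{1}\rangle=(-1)^m\langle B^m\mathbf{1},B^m\mathbf{1}\rangle=(-1)^m\|B^m\mathbf{1}\|_2^{\,2},
$$
$$
t_{C_{2m}}(B)=\operatorname{tr}(B^{2m})=(-1)^m\operatorname{tr}\bigl((B^m)^\ast B^m\bigr)=(-1)^m\|B^m\|_{\mathrm{HS}}^{\,2}.
$$
Both expressions carry the common sign $(-1)^m$, which equals $+1$ when $n\equiv 0\pmod 4$ (so $m$ is even) and $-1$ when $n\equiv 2\pmod 4$ (so $m$ is odd), yielding exactly the claimed inequalities in each case.

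There is no real obstacle here: the argument reduces to a single operator identity combined with the nonnegativity of the $L^2$ norm and the Hilbert--Schmidt norm. The only thing to verify is the operator-theoretic setup, which is routine since the kernel lies in $L^\infty([0,1]^2)\subset L^2([0,1]^2)$. An alternative but equivalent presentation would use the real Schur normal form of the compact skew-adjoint operator $B$, decomposing $L^2([0,1])$ into $\ker(B)$ plus an orthogonal direct sum of two-dimensional invariant subspaces on which $B$ acts as $\bigl(\begin{smallmatrix}0&\mu_k\\-\mu_k&0\end{smallmatrix}\bigr)$, and reading off the signs of $\operatorname{tr}(B^n)$ and $\langle \mathbf{1},B^n\mathbf{1}\rangle$ from the eigenstructure; the adjoint identity above makes this unnecessary.
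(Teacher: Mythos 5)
Your proof is correct and takes a genuinely different route from the paper. The paper first approximates $B$ by a step kernel, reduces to a finite skew-symmetric matrix $M_B$, and appeals to the spectral theorem: the eigenvalues of $M_B$ are purely imaginary, so $M_B^n$ is negative (resp.\ positive) semidefinite when $n\equiv 2$ (resp.\ $0$) $\bmod\ 4$, and skew-symmetric when $n$ is odd. You instead work directly at the operator level on $L^2([0,1])$ with the single identity $B^\ast=-B$: the odd case follows from $B^n$ being skew-adjoint, and the even case from the factorization $B^{2m}=(-1)^m(B^m)^\ast B^m$, reducing the sign claim to nonnegativity of $\|B^m\mathbf{1}\|_2^2$ and $\|B^m\|_{\mathrm{HS}}^2$. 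Your approach avoids both the step-kernel approximation and any spectral decomposition, which is cleaner; it does lean on the standard fact that $\operatorname{tr}(B^n)$ for a composition of Hilbert--Schmidt kernels equals the integral of the diagonal of the iterated kernel, which the paper sidesteps by passing to matrices, but this is routine and not a gap.
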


\begin{proof}
Without loss of generality we may assume $B$ is a step kernel corresponding to a skew-symmetric matrix $M_B$, as any kernel can be approximated by a sequence of step kernels. Suppose $M_B$ is an $m$ by $m$ matrix, then $t_{P_{n+1}}(B)=\frac{1}{m^{n+1}}1^\top M_B^n 1$ and $t_{C_n}(B)=\frac{1}{m^{n+1}}\tr(M_B^n)$, where $\tr(\cdot)$ means taking the trace of a matrix. 

When $n$ is odd, $M^n_B$ is skew-symmetric. Thus $t_{P_{n+1}}(B)=\frac{1}{m^{n+1}}1^\top M_B^n 1=0$ and $t_{C_n}(B)=\frac{1}{m^{n+1}}\tr(M_B^n)=0$.

When $n\equiv2\mod 4$, all eigenvalues of $M_B$ are pure imaginary, hence all eigenvalues of $M^n_B$ are non-positive. Thus $M^n_B$ is negative semidefinite, which implies $t_{P_{n+1}}(B)=\frac{1}{m^{n+1}}1^\top M_B^n 1\le 0$ and $t_{C_n}(B)=\frac{1}{m^{n+1}}\tr(M_B^n)\le 0$.

The proof for the case when $n\equiv0\mod 4$ is similar.
\end{proof}

\begin{lemma}\label{lem:P5>2P3}
Let $B$ be a skew-symmetric kernel. Then $t_{P_5}(B)\ge t_{2P_3}(B)$.
\end{lemma}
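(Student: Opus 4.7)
The plan is to reinterpret both quantities via the skew-symmetric integral operator $T$ associated with $B$, namely $(Tf)(x) = \int B(x,y)f(y)\,dy$, and then recognize the desired inequality as a Cauchy--Schwarz estimate on $L^2[0,1]$. By the same approximation argument as in Lemma~\ref{lem:path and cycle}, we may reduce to the case where $B$ is a step kernel, so $T$ is a finite-dimensional skew-symmetric operator ($T^* = -T$); in fact we can work directly on $L^2[0,1]$ throughout.

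First I would introduce the auxiliary function $\phi = T\mathbf{1}$, i.e., $\phi(x) = \int B(x,y)\,dy$. Using skew-symmetry $B(x,y) = -B(y,x)$ and integrating out the leaf variables, one gets
\begin{equation*}
t_{P_3}(B) \;=\; -\int \phi(x)^2\,dx \;=\; -\|\phi\|_2^2,
\end{equation*}
so that $t_{2P_3}(B) = t_{P_3}(B)^2 = \|\phi\|_2^4$. Similarly, integrating out the two endpoints of a path of length~$4$ yields
\begin{equation*}
t_{P_5}(B) \;=\; -\langle \phi,\, T^2\phi\rangle \;=\; \|T\phi\|_2^2,
\end{equation*}
where the last equality uses $\langle \phi, T^2\phi\rangle = \langle T^*\phi, T\phi\rangle = -\|T\phi\|_2^2$.

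The inequality $t_{P_5}(B) \ge t_{2P_3}(B)$ then reads $\|T\phi\|_2^2 \ge \|\phi\|_2^4$. To see this, note that $\phi = T\mathbf{1}$ gives
\begin{equation*}
\|\phi\|_2^2 \;=\; \langle \phi, T\mathbf{1}\rangle \;=\; \langle T^*\phi, \mathbf{1}\rangle \;=\; -\langle T\phi, \mathbf{1}\rangle,
\end{equation*}
so by Cauchy--Schwarz $\|\phi\|_2^4 = \langle T\phi, \mathbf{1}\rangle^2 \le \|T\phi\|_2^2 \cdot \|\mathbf{1}\|_2^2 = \|T\phi\|_2^2$, as desired.

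There is no real obstacle; the only subtlety is keeping careful track of signs arising from skew-symmetry when reducing the path integrals to operator expressions. Once the correct functional-analytic identifications are made, the result is an immediate application of Cauchy--Schwarz, with the constant function $\mathbf{1}$ playing the role of the reference vector against which $T\phi$ is paired.
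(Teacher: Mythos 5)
Your proof is correct and is essentially the paper's argument in operator language: the paper applies Cauchy--Schwarz to $\bigl(\int_{[0,1]} h(x_3)\,dx_3\bigr)^2 \le \int_{[0,1]} h(x_3)^2\,dx_3$ where $h(x_3) = \int B(x_1,x_2)B(x_2,x_3)\,dx_1\,dx_2$, and that $h$ is precisely your $T\phi$, so the two inequalities $\langle T\phi,\mathbf{1}\rangle^2 \le \|T\phi\|_2^2\|\mathbf{1}\|_2^2$ coincide. The operator-theoretic bookkeeping is a tidy way to organize the sign-tracking but does not change the argument.
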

\begin{proof}
By definition, 
$$
\begin{aligned}
t_{2P_3}(B)&=\int_{[0,1]^6}B(x_1,x_2)B(x_2,x_3)B(x_4,x_5)B(x_5,x_6)\prod_{i=1}^6dx_i\\
&=\left(\int_{[0,1]^3}B(x_1,x_2)B(x_2,x_3)dx_1dx_2dx_3\right)^2\\
&\le \int_{[0,1]}\left(\int_{[0,1]^2}B(x_1,x_2)B(x_2,x_3)dx_1dx_2\right)^2dx_3\\
&=\int_{[0,1]^5}B(x_1,x_2)B(x_2,x_3)B(x_3,x_4)B(x_4,x_5)\prod_{i=1}^5dx_i\\
&=t_{P_5}(B).
\end{aligned}
$$

The inequality above is an application of the {Cauchy-Schwarz inequality} to the integral with respect to $x_3$.
\end{proof}

\begin{lemma}\label{lem:lambda}
Let $D$ be an oriented path. Then there exists $\eps = \eps(D)$ depending only on $D$ such that if $B$ is a skew symmetric $n \times n$ matrix with $\lmax(B) \le \eps n$ then
\begin{itemize}
    \item[(i)] $\abs{\sum_{D' \subseteq D: e(D') \ge 4}2^{-e(D)+e(D')}t_{D'}(B)} < \abs{2^{-e(D)+2}t_{P_3}(B)}.$
    \item[(ii)] $\abs{\sum_{D' \subseteq D: e(D') \ge 6}2^{-e(D)+e(D')}t_{D'}(B)} < \abs{2^{-e(D)+4}t_{P_5}(B)}.$
    \item[(iii)] Let $\cD$ consist of all $D' \subseteq D$ with $e(D') \ge 6$ and such that $D'$ has at least two connected components. Then $\abs{\sum_{D' \in \cD}2^{-e(D)+e(D')}t_{D'}(B)} < \abs{2^{-e(D)+4}t_{2P_3}(B)}.$
    \item[(iv)] For any $\ell \ge 1$, $\abs{\sum_{t=\ell+1}^\infty 2^{-e(D)+2t}t_{P_{2t+1}}(B)} < \abs{2^{-e(D)+2\ell}t_{P_{2\ell+1}}(B)}.$
\end{itemize}
\end{lemma}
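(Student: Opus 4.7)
The plan is to bound $|t_{D'}(B)|$ for every subgraph $D' \subseteq D$ by spectral estimates on the path densities it factors into, and then compare the total sum against the single main term on the right-hand side of each part.

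Since $B$ is real skew-symmetric, its eigenvalues take the form $i\lambda_j$ with $\lambda_j\in\mathbb R$ and $\max_j|\lambda_j|\le\eps n$. Expanding $\mathbf{1}=\sum_j\alpha_j v_j$ in an orthonormal eigenbasis with $\sum_j|\alpha_j|^2=n$ gives
\begin{equation*}
t_{P_{2m+1}}(B)=\frac{(-1)^m}{n^{2m+1}}\sum_j\lambda_j^{2m}|\alpha_j|^2,
\end{equation*}
so writing $s_m:=|t_{P_{2m+1}}(B)|$, the bound $\lambda_j^{2m}\le(\eps n)^{2(m-k)}\lambda_j^{2k}$ yields the key comparison $s_m\le\eps^{2(m-k)}s_k$ for all $m\ge k\ge 0$. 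On the subgraph side, any $D'\subseteq D$ decomposes into isolated vertices and oriented path segments of edge-counts $k_1,\dots,k_r$; since the defining integral factors across components and reversed edges introduce only signs via $B(y,x)=-B(x,y)$, $|t_{D'}(B)|=\prod_j|t_{P_{k_j+1}}(B)|$. By Lemma~\ref{lem:path and cycle} this vanishes unless every $k_j$ is even, in which case $|t_{D'}(B)|=\prod_j s_{k_j/2}\le s_1^r\cdot\eps^{e(D')-2r}$.

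The remaining step is to trade factors using $s_1\le\eps^2$ and the identity $s_1^2=t_{2P_3}(B)\le t_{P_5}(B)=s_2$ from Lemma~\ref{lem:P5>2P3}. For (i), $s_1^r\le\eps^{2(r-1)}s_1$ gives $|t_{D'}(B)|\le\eps^{e(D')-2}s_1$, so summing over $D'$ with $e(D')\ge 4$ produces a geometric series in $\eps$ starting at order $\eps^2$, strictly less than $4 s_1$ once $\eps$ is small enough depending on $e(D)$. For (ii), the identity $s_1^r\le\eps^{2(r-2)}s_1^2\le\eps^{2(r-2)}s_2$ (valid for $r\ge 2$) together with the direct bound $s_m\le\eps^{2m-4}s_2$ for single-segment $D'$ with $m\ge 3$ gives $|t_{D'}(B)|\le\eps^{e(D')-4}s_2$, summed analogously. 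For (iii) the parallel bound with $|t_{2P_3}(B)|$ in place of $s_2$ handles the multi-component case. Part (iv) is the cleanest: $s_t\le\eps^{2(t-\ell)}s_\ell$ gives $\sum_{t>\ell}2^{2t}s_t\le 2^{2\ell}s_\ell\cdot(2\eps)^2/(1-(2\eps)^2)<2^{2\ell}s_\ell$ for $\eps<1/2$.

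The main obstacle is the multi-component case of (ii) and (iii): crude estimates only give $|t_{D'}(B)|\le s_1\cdot\eps^{e(D')-2}$, which cannot be summed against a main term of $s_2$ or $|t_{2P_3}(B)|$ without further input. Lemma~\ref{lem:P5>2P3} is what makes the trade $s_1^2\le s_2$ (together with the equality $s_1^2=|t_{2P_3}(B)|$) available so that two factors of $s_1$ can be converted into exactly one factor of the target main term. A minor subtlety is that the inequalities are strict: in the degenerate case $B\mathbf{1}=0$ every density on both sides vanishes, so the statement should be read as $\le$, while the strict version assumes the main term is nonzero.
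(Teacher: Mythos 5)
Your proof is correct and follows essentially the same spectral strategy as the paper: diagonalize $B$, express the path densities $t_{P_{2m+1}}(B)$ via the spectral measure of the all-ones vector, pull out powers of $\lmax \le \eps n$ to compare higher moments to lower ones, factor $t_{D'}(B)$ over connected components, and use Lemma~\ref{lem:P5>2P3} to trade two $P_3$ factors for one $P_5$ factor when needed. Your version is slightly cleaner in one respect: by working directly with $s_m = |t_{P_{2m+1}}(B)|$ and noting $s_1 \le \eps^2 s_0 = \eps^2$, you get the uniform bound $|t_{D'}(B)| \le \eps^{e(D')-2}s_1$ with a strictly positive power of $\eps$ even when $D'$ is a disjoint union of $P_3$'s (the case where the paper's exponent $v-m_0-3S$ degenerates to $0$, so the paper implicitly needs the sharper $X_2 \le \eps^2 n^3$ rather than the stated $X_2 \le n^3$). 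Your closing remark about the strict inequality in the degenerate case $s_1=0$ is accurate and applies equally to the paper's argument; it is harmless for the downstream applications.
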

\begin{proof}
    By \Cref{lem:path and cycle} the only $D' \subseteq D$ with $t_{D'}(B) \neq 0$ are disjoint unions of paths with an even number of edges. As seen in \Cref{sec:polyexpansion}, when $D'$ is a union of paths, $h_{D'}(B)$ is a polynomial consisting of monomials that look like products of $1^\top B^k 1$ for positive even $k \in \ZZ$. We thus work with this normalization, $h_{D'}(B)$ instead of $t_{D'}(B)$, and start by deriving a spectral formula for such monomials. Let $X_{2t} \coloneqq \abs{1^\top B^{2t}1}$. Since $B$ is skew symmetric we have $\bigp{B^2}^\top = B^\top B^\top = (-B)(-B) = B^2$ so $B^2$ is real symmetric and hence self-adjoint. Then the real spectral theorem guarantees an orthonormal basis of eigenvectors $v_1,\dots,v_n$ with all real entries. Having real entries is important for us, and this is why we do not apply the complex spectral theorem to $B$ itself, which is normal but not self-adjoint. Furthermore the eigenvalues of $B^2$ are clearly $-\lambda_1^2,\dots,-\lambda_n^2$. Then writing $1 = \sum_{i=1}^n c_i v_i$ where $c_i = \biga{1,v_i} \in \RR$ we have
    \begin{align}\label{eq:xformula}
        X_{2t} = \abs{\bigp{\sum_i c_i v_i}^\top B^{2t} \sum_i c_i v_i} = \abs{\sum_i c_i^2 \bigp{-\lambda_i^2}^{t}} = \sum_i c_i^2 \bigp{\lambda_i^2}^t.
    \end{align}
    Note in particular that $c_i \in \RR$ and $\sum_{i=1}^n c_i^2 = 1$.

    Now let $v \coloneqq V(D)$. We first handle inequality $(i)$. Re-normalizing $n^v t_{D'}(B) = h_{D'}(B)$ and using triangle inequality, we may upper bound the left hand side of $(i)$ by a polynomial of the form
    $$
    \sum_{i=1}^{\infty} a_i \prod_{l=0}^{\infty} X_{2l}^{m_{i,l}},
    $$
    where $m_{i,l} \in \ZZ_{\ge 0}$ is such that $m_{i,0} < v-3$ for all $i$ ($m_{i,0} \in \set{0,v-3}$ correspond to $D'$ with $e(D') < 4$) and $a_i = a_i(D) \in \QQ$ are constants depending only on $D$. Explicitly $a_i$ is the normalizing power of $\frac 1 2$ in front of the monomial (depending on its degree) multiplied by the coefficient given by signed subpath counts, but since we only aim to find some $\eps(D)$ we don't need to worry about their actual values. It is important to note that at most a bounded (in terms of $D$) number of the $a_i$ can be nonzero, in particular only those $i$ such that $\sum_{l=0}^\infty m_{i,l}(2l+1) = v$. Thus going forward assume any monomials we are handling satisfy $\sum_{l=0}^\infty m_{i,l}(2l+1) = v$ and let $c = c(D)$ denote the number of these monomials. 
    
    To establish $(i)$ it suffices to show that
    \begin{equation}\label{eq:reduction}
        \prod_{l=0}^{\infty} X_{2l}^{m_{i,l}} \le c' X_0^{v-3}X_2 = c' n^{v-3}X_2
    \end{equation}
    for any $i$, where $c' = c'(D)$. Explicitly $c' = (2^{v-2}a_{i_*}c)^{-1}$ where $a_{i_*}$ has the largest magnitude of the $a_i$, but again we will not care about this. Indeed this inequality would establish that all higher order terms (at most $c$ of them) are uniformly small enough that even when scaled by their coefficients and added up (regardless of signs) they are still at most $\frac{n^{v-3}}{2^{v-2}}X_2 = 2^{-v+2}\abs{t_{P_3}(B)}$ as desired.
    
    Letting $m_{i,l} = m_l$ for any $i$ such that we want to show \eqref{eq:reduction}, by \Cref{eq:xformula} we have
    \begin{align*}
         \prod_{l=0}^{\infty} X_{2l}^{m_{l}} &= n^{m_{0}}\prod_{l=1}^{\infty} X_{2l}^{m_{l}} = n^{m_{0}}\prod_{l=1}^{\infty} \bigp{\sum_{j=1}^n c_j^2 \lambda_j^{2l}}^{m_{l}} \\
         &\le n^{m_{0}}\prod_{l=1}^{\infty} \bigp{\lmax^{2l-2}\sum_{j=1}^n c_j^2 \lambda_j^{2}}^{m_{l}} = n^{m_{0}}\prod_{l=1}^{\infty} \lmax^{m_l(2l-2)}X_2^{m_l}\\
         &= n^{m_{0}} \lmax^{\sum_{l=1}^\infty m_l(2l-2)} X_2^{\sum_{l=1}^\infty m_l}.
    \end{align*}
    Then, letting $S \coloneqq \sum_{l=1}^\infty m_l$, if $\lmax \le \eps n$ this is at most
    $$
    \eps^{v-m_0-3S}n^{v-3S}X_2^{S} \le \eps^{v-m_0-3S}n^{v-3}X_2,
    $$
    using the fact that $X_2 \le n^3$ by definition (also note $S \ge 1$ by assumption that $m_0 < v$). Note that the exponent $v-m_0-3S = \sum_{l=1}^\infty 2m_l(l-1) \ge 1$ since it is only equal to $0$ in the two cases for which we do not need to prove \eqref{eq:reduction}. Thus setting
    $$
    \eps = \eps(D) = \frac{c'(D)}{2}
    $$
    suffices to prove inequality $(i)$.

    Inequalities $(ii)-(iv)$ follow identically, with small twists. For instance in $(ii)$ we aim to show
    \begin{equation}\label{eq:reduction2}
        \prod_{l=0}^{\infty} X_{2l}^{m_{i,l}} \le c' X_0^{v-5}X_4 = c' n^{v-5}X_4
    \end{equation}
    for monomials with $m_{i,0} \le v-5$. For any such monomial $\prod_{l=0}^{\infty} X_{2l}^{m_{l}}$, if $m_l > 0$ for any $l \ge 2$ then simply do as above, pulling out $\lmax^{m_l(2l-4)}$ and bounding all other factors trivially in terms of $n$. If not, then because $m_{0} \le v-5$, we must have $m_{1} \ge 2$. Then bounding $X_2^2 \le X_4$ by Cauchy-Schwarz (re-normalized \Cref{lem:P5>2P3}) and the other factors trivially in terms of $n$ gives \Cref{eq:reduction2}. For $(iii)$ since each connected component of $D' \in \cD$ has at least two edges one may pull out powers of $\lmax$ until reaching $X_2^2$. $(iv)$ also follows by pulling out powers of $\lmax$.
\end{proof}

A kernel $B$ is said to be \textit{balanced} if, for almost all $x\in [0,1]$, $\int_{[0,1]}B(x,y)dy=0$.

\begin{lemma}\label{lem:balanced}
Let $B$ be a balanced skew-symmetric kernel and let $T$ be any digraph whose underlying graph has a vertex with degree 1. Then we have $t_T(B)=0$.
\end{lemma}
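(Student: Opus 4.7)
The plan is to isolate the unique edge incident to the degree-one vertex and exploit the balanced hypothesis directly. Let $v \in V(T)$ be the degree-one vertex and let $w$ be its unique neighbor, so the one edge of $T$ incident to $v$ is either $(v,w)$ or $(w,v)$. Writing
\[
t_T(B) = \int_{[0,1]^{V(T)}} \prod_{(x,y) \in E(T)} B(\phi(x), \phi(y)) \prod_{x \in V(T)} dx
\]
(where we abuse notation identifying $\phi(x)$ with the integration variable $x$), the key observation is that the variable $x_v$ appears in exactly one factor of the integrand, namely $B(x_v, x_w)$ or $B(x_w, x_v)$ depending on the orientation.

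By Fubini's theorem I would then integrate over $x_v$ first, writing
\[
t_T(B) = \int_{[0,1]^{V(T) \setminus \{v\}}} \left(\int_{[0,1]} B(x_v, x_w)^{\pm 1?} \, dx_v\right) \prod_{\text{other edges}} B(\cdot, \cdot) \prod_{x \ne v} dx,
\]
where the inner integral is either $\int B(x_v, x_w)\,dx_v$ or $\int B(x_w, x_v)\,dx_v$. In the latter case the balanced condition gives that this vanishes for almost all $x_w$. In the former case I would invoke skew-symmetry $B(x_v, x_w) = -B(x_w, x_v)$ to reduce to the same integral, again vanishing for almost all $x_w$ by the balanced hypothesis.

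Since the inner integral equals zero for almost every choice of $x_w$, the full outer integral vanishes, giving $t_T(B) = 0$ as desired. There is no real obstacle here beyond bookkeeping: the proof is essentially a direct calculation using Fubini, the balanced condition, and (in one orientation case) skew-symmetry to relate $\int B(x, y)\, dx = 0$ to $\int B(x, y)\, dy = 0$.
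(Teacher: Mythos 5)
Your proof is correct and takes essentially the same approach as the paper: isolate the variable for the degree-one vertex, integrate over it first by Fubini, and observe that the balanced hypothesis (together with skew-symmetry for one orientation) makes the inner integral vanish almost everywhere. The paper's write-up only treats the case where the degree-one vertex has an in-neighbor, implicitly invoking skew-symmetry for the other orientation, whereas you spell out both cases explicitly — a minor difference in presentation, not in substance.
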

\begin{proof}
Suppose the vertex set of $T$ is $\{v_1,\dots,v_m\}$, the vertex with degree 1 is $v_1$ and it has an in-neighbor $v_2$. By definition,
$$
\begin{aligned}
t_T(B)&=\int_{[0,1]^m}\prod_{(v_j,v_k)\in E(T)}B(x_j,x_k)\prod_{i=1}^mdx_i\\
&=\int_{[0,1]^{m-1}}\prod_{\substack{(v_j,v_k)\in E(T)\\ (j,k)\not=(2,1)}}B(x_j,x_k)\left(\int_{[0,1]}B(x_2,x_1)dx_1\right)\prod_{i=2}^mdx_i = 0.
\end{aligned}
$$
Here the third equality makes use of the fact that $B$ is balanced.
\end{proof}

\begin{lemma}\label{lem:balanced_P3}
Let $B$ be a skew-symmetric kernel. Then $B$ is balanced if and only if $t_{P_3}(B)=0$.
\end{lemma}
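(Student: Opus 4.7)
The plan is to expand $t_{P_3}(B)$ directly from the definition, integrate out the two leaf variables using Fubini, and use skew-symmetry of $B$ to rewrite both inner integrals in terms of a single function. This will express $t_{P_3}(B)$ as (minus) an $L^2$ norm, so its vanishing is equivalent to that function being a.e.\ zero, which is precisely the balanced condition.

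Concretely, I would define $g(y) \coloneqq \int_{[0,1]} B(y,x)\,dx$. By skew-symmetry, $\int_{[0,1]} B(x,y)\,dx = -g(y)$. Applying Fubini to
$$t_{P_3}(B) = \int_{[0,1]^3} B(x_1,x_2)B(x_2,x_3)\,dx_1\,dx_2\,dx_3$$
and doing the $x_1$ and $x_3$ integrals first yields
$$t_{P_3}(B) = \int_{[0,1]} \bigl(-g(x_2)\bigr)\cdot g(x_2)\,dx_2 = -\int_{[0,1]} g(x_2)^2\,dx_2.$$

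Since the right-hand side is a non-positive $L^2$ integral of a real-valued function, it equals $0$ if and only if $g(x) = 0$ for almost every $x \in [0,1]$, which is precisely the definition of $B$ being balanced. This gives the equivalence in both directions simultaneously.

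The argument is essentially a one-line calculation, so there is no real obstacle; the only subtlety is being careful that $g$ is real-valued (which follows because $B$ takes real values, since as a tournamenton kernel $B = W - \tfrac12 J$ with $W$ real), so that $g(x_2)^2 \ge 0$ and the vanishing argument is valid. Note that this lemma explains why the algorithm in \Cref{alg:paths} and the higher-order inequalities of \Cref{lem:lambda} are able to use $t_{P_3}(B)$ as the ``leading'' perturbation term: the only way for it to vanish is the structurally rigid condition of $B$ being balanced, and in that case \Cref{lem:balanced} kills all paths with a leaf, forcing one to inspect the next-order terms like $t_{P_5}(B)$ and $t_{2P_3}(B)$.
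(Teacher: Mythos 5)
Your proof is correct and uses the same key identity as the paper: $t_{P_3}(B) = -\int_{[0,1]} g(y)^2\,dy$ where $g(y) = \int_{[0,1]} B(y,x)\,dx$. The only cosmetic difference is that the paper invokes \Cref{lem:balanced} for the ``balanced $\Rightarrow$ $t_{P_3}(B)=0$'' direction and the integral identity only for the converse, whereas you correctly observe that the identity alone settles both directions simultaneously.
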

\begin{proof}
If $B$ is balanced, by Lemma \ref{lem:balanced} we have $t_{P_3}(B)=0$. On the other hand, if $t_{P_3}(B)=0$, by definition
$$
t_{P_3}(B)=\int_{[0,1]^3}B(x,y)B(y,z)dxdydz=-\int_{[0,1]}\left(\int_{[0,1]}B(y,x)dx\right)^2dy\le 0.
$$
Note that the inequality is tight only if $\int_{[0,1]}B(y,x)dx=0$ for almost all $y$. Thus $B$ is balanced.
\end{proof}

The following theorem provides a simple sufficient condition for an oriented path to be LTS/LTAS.

\begin{theorem}\label{thm:wedges}
Let $D$ be an oriented path. If $\mathcal{C}(P_3,D)>0$, then $D$ is LTAS. On the other hand, if $\mathcal{C}(P_3,D)<0$, then $D$ is LTS.
\end{theorem}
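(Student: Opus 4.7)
The plan is to apply Lemma~\ref{lem:decomposition} to expand, for a weighted tournament $W = \tfrac{1}{2}J + B$ close to quasirandom,
$$
t_D(W) - 2^{-e(D)} \;=\; \sum_{\emptyset \ne D' \subseteq D} 2^{-e(D)+e(D')}\, t_{D'}(B),
$$
and identify the dominant contribution once $\lmax(B)$ is small. First I would eliminate low-order terms. Skew-symmetry of $B$ forces $\int B = 0$, so every spanning subgraph $D'$ containing a single isolated edge as a connected component contributes zero. This kills the case $e(D')=1$ and the disjoint-pair case of $e(D')=2$, and together with Lemma~\ref{lem:path and cycle} applied to $P_4$ (which has three edges, an odd number, hence integrates to $0$) it kills all $e(D')=3$ configurations. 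So the first nonvanishing contribution comes from $D' \subseteq D$ consisting of two adjacent edges of $D$ together with isolated vertices.

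Next I would compute the contribution of each adjacent pair and match it against $\mathcal{C}(P_3,D)$. A direct computation using $B(y,x) = -B(x,y)$ gives $t_{D'}(B) = +t_{P_3}(B)$ when the two edges point the same way through their shared vertex (a local $\rightarrow\rightarrow$ in $D$) and $t_{D'}(B) = -t_{P_3}(B)$ otherwise. On the combinatorial side, the two orderings of $V(P_3)$ embedded into an adjacent pair always share the same reversed-edge parity, so each adjacent pair contributes $\pm 2$ to $\mathcal{C}(P_3,D)$ with the same sign convention. Summing yields
$$
\sum_{\substack{D' \subseteq D \\ e(D')=2}} t_{D'}(B) \;=\; \tfrac{1}{2}\,\mathcal{C}(P_3,D)\, t_{P_3}(B),
$$
and in particular $\mathcal{C}(P_3,D)$ is always an even integer, so $|\mathcal{C}(P_3,D)| \ge 2$ whenever it is nonzero.

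The conclusion then follows by combining Lemma~\ref{lem:path and cycle} (which gives $t_{P_3}(B) \le 0$) with Lemma~\ref{lem:lambda}(i) (which bounds the $e(D')\ge 4$ tail by strictly less than $|2^{-e(D)+2} t_{P_3}(B)|$ once $\lmax(B) \le \eps(D) n$). The main term has magnitude $2^{-e(D)+1}|\mathcal{C}(P_3,D)|\,|t_{P_3}(B)| \ge 2^{-e(D)+2}|t_{P_3}(B)|$, dominating the tail, so $t_D(W) - 2^{-e(D)}$ inherits the sign of $\mathcal{C}(P_3,D) \cdot t_{P_3}(B)$: nonpositive when $\mathcal{C}(P_3,D) > 0$ (so $D$ is LTAS) and nonnegative when $\mathcal{C}(P_3,D) < 0$ (so $D$ is LTS).

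The main obstacle is the bookkeeping: verifying that the sign $\pm t_{P_3}(B)$ produced by skew-symmetry at each adjacent-edge pair lines up exactly with the parity convention defining $\mathcal{C}(P_3,D)$, together with the factor of $2$ coming from the two orderings of $V(P_3)$ embedding into a fixed adjacent pair. All of the quantitative analytic content — controlling the tail of higher-order spanning subgraphs — is already absorbed into Lemma~\ref{lem:lambda}(i), so no additional estimates are required.
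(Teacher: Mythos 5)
Your proof is correct and follows essentially the same route as the paper's: expand via Lemma~\ref{lem:decomposition}, observe that skew-symmetry and Lemma~\ref{lem:path and cycle} kill all $D'$ except disjoint unions of even-length subpaths, identify the $e(D')=2$ adjacent-pair term as proportional to $\mathcal{C}(P_3,D)\,t_{P_3}(B)$, and dominate the $e(D')\ge 4$ tail using Lemma~\ref{lem:lambda}(i). One thing you handled more carefully than the paper's write-up: the sum $\sum_{e(D')=2} t_{D'}(B)$ genuinely equals $\tfrac{1}{2}\mathcal{C}(P_3,D)\,t_{P_3}(B)$ (each adjacent pair is one subgraph $D'$ but is counted twice by the injective functions defining $\mathcal{C}$), whereas the paper's displayed identity implicitly drops this factor of $2$; your observation that $\mathcal{C}(P_3,D)$ is always even, so $|\mathcal{C}(P_3,D)|\ge 2$ when nonzero, is exactly what makes the comparison with the Lemma~\ref{lem:lambda}(i) bound close correctly.
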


\begin{proof}
Let $\eps = \eps(D) > 0$ be as in \Cref{lem:lambda} and consider any $n$-vertex weighted tournament $A=\frac{1}{2}+B$ such that  $\lmax(B)\le \varepsilon n$. By Lemma \ref{lem:decomposition} together with the fact that $t_{K_2}(B)=t_{2K_2}(B)=0$,
$$
t_D(A)=2^{-e(D)}+ \mathcal{C}(P_3,D)2^{-e(D)+2}t_{P_3}(B)+\sum_{\substack{D'}}2^{-e(D)+e(D')}t_{D'}(B),
$$
where the sum ranges over all subgraph $D'\subseteq D$ such that $D'$ is a disjoint union of paths with even number of edges with $e(D')\ge 4$. By \Cref{lem:path and cycle}, we have $t_{P_3}(B)\le 0$. Then inequality $(i)$ of \Cref{lem:lambda} guarantees that the tail is small in magnitude, and hence $t_D(A)\le 2^{-e(D)}$ if $\mathcal{C}(P_3,D)>0$ and $t_D(A)\ge 2^{-e(D)}$ if $\mathcal{C}(P_3,D)<0$.
\end{proof}

It remains to consider when $\mathcal{C}(P_3,D)=0$. We will use what has become known as the tensor power trick.

\begin{definition}[Tensor product of kernels]
Let $W_1,W_2$ be kernels on $[0,1]^2$. Their \emph{tensor product} is the kernel
\[
  W_1\otimes W_2 : \bigl([0,1]^2\bigr)^2 \longrightarrow [0,1],
  \qquad
  ( (x_1,x_2),(y_1,y_2) ) \longmapsto W_1(x_1,y_1)\, W_2(x_2,y_2).
\]
We will identify $[0,1]^2$ with $[0,1]$ via any
measure-preserving bijection if we wish to view $W_1\otimes W_2$ again as a kernel on $[0,1]^2$.
\end{definition}

The following proposition is straightforward from the definition; the proof is omitted.
\begin{proposition}\label{prop:tensor}
Let $W_1,W_2$ be kernels. For every finite simple graph $F$,
  \[
     t\bigl(F,\, W_1\otimes W_2\bigr) \;=\; t_{F}(W_1)\, t_{F}(W_2).
  \]
\end{proposition}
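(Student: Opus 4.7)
The plan is to unfold the integral definition \eqref{eq:hom-density} for the tensor product kernel and observe that it factors into two independent integrals, one over each coordinate. Since the measure-preserving bijection $[0,1]^2 \to [0,1]$ used to reinterpret $W_1 \otimes W_2$ as a kernel on $[0,1]^2$ is measure-preserving, we may work directly on $([0,1]^2)^{V(F)}$ without changing the value of $t_F$. So each vertex $v \in V(F)$ gets assigned a pair $(x_v, y_v) \in [0,1]^2$, and the product measure decomposes as $\prod_{v \in V(F)} dx_v \, dy_v$.

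With this setup the integrand factors edge-by-edge: for each $(i,j) \in E(F)$, we have
\[
  (W_1 \otimes W_2)\bigl((x_i,y_i),(x_j,y_j)\bigr) = W_1(x_i,x_j)\, W_2(y_i,y_j),
\]
so taking the product over edges yields
\[
  \prod_{(i,j)\in E(F)} W_1(x_i,x_j) \; \cdot \; \prod_{(i,j)\in E(F)} W_2(y_i,y_j).
\]
The first factor depends only on the $x$-variables and the second only on the $y$-variables. Applying Fubini's theorem (valid since all kernels take values in $[0,1]$ and $F$ is finite, so the integrands are bounded and measurable) separates the integral into the product
\[
  \left(\int_{[0,1]^{V(F)}} \prod_{(i,j)\in E(F)} W_1(x_i,x_j) \prod_v dx_v\right)\!\left(\int_{[0,1]^{V(F)}} \prod_{(i,j)\in E(F)} W_2(y_i,y_j) \prod_v dy_v\right),
\]
which by definition equals $t_F(W_1)\, t_F(W_2)$.

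There is no real obstacle; the only thing to be careful about is the bookkeeping with the identification of $[0,1]^2$ with $[0,1]$, but since any measure-preserving bijection leaves integrals unchanged, this is purely cosmetic. The argument also works verbatim for digraphs $F$ (which is how the statement will be applied in the sequel), since the orientation of edges plays no role in the factorization.
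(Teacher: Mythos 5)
Your proof is correct and is precisely the standard argument the paper alludes to when it says the proposition ``is straightforward from the definition; the proof is omitted'': work on $([0,1]^2)^{V(F)}$ via the measure-preserving identification, factor the integrand edge-by-edge into $x$-terms and $y$-terms, and apply Fubini. Nothing is missing, and the remark that the same argument handles digraphs is a sensible aside given how the proposition is later used.
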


For a kernel $W:[0,1]^2\to\mathbb{R}$ and an integer $n\ge1$, we define the
$n$-fold tensor product
\[
   W^{\otimes n}
   \;\coloneqq\; \underbrace{W\otimes W\otimes\cdots\otimes W}_{n\text{ times}}.
\]

\begin{theorem}\label{thm:countingP5and2P3}
Let $D$ be an oriented path with $\mathcal{C}(P_3,D)=0$ and \\ $\mathcal{C}(P_5,D)\not\in\{0, -\mathcal{C}(2P_3,D)\}$.
\begin{itemize}
    \item[(i)]If $\mathcal{C}(P_5,D)>0$ and $\mathcal{C}(P_5,D)> -\mathcal{C}(2P_3,D)$, then $D$ is LTS.
    \item[(ii)]If $\mathcal{C}(P_5,D)<0$ and $\mathcal{C}(P_5,D)< -\mathcal{C}(2P_3,D)$, then $D$ is LTAS.
    \item [(iii)] Otherwise, i.e. if 
    \begin{itemize}
        \item[$\bullet$] $\mathcal{C}(P_5,D)>0$ and $\mathcal{C}(P_5,D)< -\mathcal{C}(2P_3,D)$, or
        \item[$\bullet$] $\mathcal{C}(P_5,D)<0$ and $\mathcal{C}(P_5,D)> -\mathcal{C}(2P_3,D)$,
    \end{itemize}
    then $D$ is neither LTS nor LTAS.
\end{itemize}
\end{theorem}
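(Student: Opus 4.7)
My approach starts from the decomposition of Lemma \ref{lem:decomposition}: using the hypothesis $\mathcal{C}(P_3,D)=0$ and Lemma \ref{lem:path and cycle} (which kills all contributions from disjoint unions of paths of odd total edge count), we get
\[ t_D(A) = 2^{-e(D)} + 2^{-e(D)+4}\bigl[\mathcal{C}(P_5,D)\,t_{P_5}(B) + \mathcal{C}(2P_3,D)\,t_{2P_3}(B)\bigr] + \mathrm{tail}, \]
where the tail gathers all $D'\subseteq D$ with $e(D')\ge 6$ and is controlled by Lemma \ref{lem:lambda}. If $B$ is balanced, every $t_{D'}(B)$ over path-like $D'$ vanishes by Lemma \ref{lem:balanced} and $t_D(A) = 2^{-e(D)}$. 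Otherwise, Lemmas \ref{lem:balanced_P3} and \ref{lem:P5>2P3} give $t_{P_5}(B)\ge t_{2P_3}(B) = t_{P_3}(B)^2 > 0$.

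For cases (i) and (ii) I would bound the bracketed main term in terms of $t_{P_5}(B)$ and then invoke Lemma \ref{lem:lambda} to control the tail. In case (i), either $\mathcal{C}(2P_3, D) \ge 0$ (the bracket exceeds $\mathcal{C}(P_5, D)\, t_{P_5}(B) \ge t_{P_5}(B)$) or $\mathcal{C}(2P_3, D) < 0$ (using $t_{2P_3}(B)\le t_{P_5}(B)$ the bracket exceeds $(\mathcal{C}(P_5,D)+\mathcal{C}(2P_3,D))\,t_{P_5}(B)\ge t_{P_5}(B)$, since by hypothesis the integer $\mathcal{C}(P_5,D)+\mathcal{C}(2P_3,D)\ge 1$). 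Either way the main bracket has magnitude at least $2^{-e(D)+4}\,t_{P_5}(B)$; shrinking $\eps$ in Lemma \ref{lem:lambda}(ii) makes the tail strictly smaller in magnitude, yielding $t_D(A) \ge 2^{-e(D)}$. Case (ii) is symmetric with all signs flipped.

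For case (iii) I would construct counterexamples via the scaling $A_\delta=\tfrac12+\delta B_0$ for a fixed skew-symmetric $B_0$ and $\delta\to 0$. Then
\[ t_D(A_\delta)-2^{-e(D)} = 2^{-e(D)+4}\delta^4\bigl[\mathcal{C}(P_5,D)\,t_{P_5}(B_0) + \mathcal{C}(2P_3,D)\,t_{2P_3}(B_0)\bigr] + O(\delta^6), \]
so the sign is determined by the $\delta^4$-bracket for $\delta$ small. The crux is that the ratio $t_{P_5}(B_0)/t_{2P_3}(B_0)$ ranges over all of $[1,\infty)$ for unbalanced $B_0$: the complete-bipartite orientation $B_{\mathrm{bip}}$ (half the vertices on each side, with all edges going one way) achieves the ratio $1$ because $M_{B_{\mathrm{bip}}}^2\mathbf{1}$ is constant (equality in \Cref{lem:P5>2P3}), while the tridiagonal ``path'' kernel on $n_0$ vertices achieves ratio $\sim n_0/2$, unbounded as $n_0\to\infty$. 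In case (iii-a) the path kernel makes the bracket positive (dominated by $\mathcal{C}(P_5,D)\,t_{P_5}(B_0)$), witnessing not-LTAS; $B_{\mathrm{bip}}$ makes the bracket equal to $(\mathcal{C}(P_5,D)+\mathcal{C}(2P_3,D))\,t_{2P_3}(B_{\mathrm{bip}}) < 0$, witnessing not-LTS. Case (iii-b) simply swaps the roles of the two kernels.

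The main obstacle is arranging the case (iii) constructions so that simultaneously (a) the quasirandomness condition $\lmax(\delta B_0)\le \eps n_0$ holds for the prescribed $\eps$, (b) the $\delta^4$ main strictly dominates the $O(\delta^6)$ tail, and (c) the bracket's sign is as claimed. All three follow by taking $\delta$ sufficiently small (and, for the path construction, $n_0$ sufficiently large), with the integer lower bound $|\mathcal{C}(P_5,D)+\mathcal{C}(2P_3,D)|\ge 1$ keeping the bracket bounded away from zero.
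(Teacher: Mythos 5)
Your plan follows the same overall strategy as the paper: Lemma \ref{lem:decomposition} together with $\mathcal{C}(P_3,D)=0$ and Lemma \ref{lem:path and cycle} gives the expansion \eqref{eq:decomp_path}; for (i) and (ii) you bound the $\delta^4$-order bracket from below by $t_{P_5}(B)$ using $t_{P_5}\ge t_{2P_3}\ge 0$ and the integer bound $|\mathcal{C}(P_5,D)+\min\{\mathcal{C}(2P_3,D),0\}|\ge 1$, then absorb the tail via Lemma \ref{lem:lambda}(ii). This matches the paper's argument for (i)--(ii) exactly.

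For part (iii) you take a genuinely different route. Both proofs exhibit two kernels $B_0$ with extremal values of the ratio $t_{P_5}(B_0)/t_{2P_3}(B_0)$ and then scale by $\delta\to 0$. For the ``ratio $1$'' witness you use the balanced-bipartite skew-symmetric matrix, which for $n_0=2$ is literally the paper's $B_1=\begin{psmallmatrix}0&1\\-1&0\end{psmallmatrix}$; here $M^2\mathbf{1}$ is a scalar multiple of $\mathbf{1}$, so the Cauchy--Schwarz in Lemma \ref{lem:P5>2P3} is tight. For the ``ratio large'' witness the paper fixes a $3\times3$ matrix $B'$ with ratio strictly above $1$ and invokes the tensor power trick (Proposition \ref{prop:tensor}) to push the ratio to $\mathcal{C}(\cdot)^m$, whereas you use the tridiagonal ``path'' kernel $M$ on $n_0$ vertices. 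Your unverified claim that this ratio is $\sim n_0/2$ is in fact correct: $M^2\mathbf{1}$ is $-1$ at positions $2$ and $n_0-1$ and $0$ elsewhere, so $\mathbf{1}^\top M^2\mathbf{1}=-2$ and $\mathbf{1}^\top M^4\mathbf{1}=2$, giving $t_{P_5}/t_{2P_3}=\tfrac{n_0\cdot 2}{4}=n_0/2$ exactly. Your approach avoids introducing tensor powers entirely and so is a bit more elementary and self-contained; the paper's tensor-power route keeps the construction on a fixed vertex set and illustrates a technique reused elsewhere. Both are valid, and your sign analysis in (iii-a)/(iii-b) is correct.
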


\begin{proof}
Let $\eps = \eps(D) > 0$ be as in \Cref{lem:lambda} and consider any $n$-vertex weighted tournament $A=\frac{1}{2}+B$ such that  $\lmax(B)\le \varepsilon n$. By \Cref{lem:decomposition} and since $\mathcal{C}(P_3,D)=0$, we have
\begin{equation}\label{eq:decomp_path}
t_D(A)=2^{-e(D)}+(\mathcal{C}(P_5,D)t_{P_5}( B)+\mathcal{C}(2P_3,D)t_{2P_3}( B))2^{-e(D)+4}+\sum_{\substack{D'}}2^{-e(D)+e(D')}t_{D'}(B),
\end{equation}
where the sum ranges over all subgraphs $D'\subseteq D$ such that $D'$ is a disjoint union of paths with even number of edges with $e(D')\ge 6$. 

For \textit{(i)}, we assume $\mathcal{C}(P_5,D)>0$ and $\mathcal{C}(P_5,D) + \mathcal{C}(2P_3,D) > 0$. Then by \Cref{eq:decomp_path} and \Cref{lem:P5>2P3}
\begin{align*}
    t_D(A) &\ge 2^{-e(D)}+(\mathcal{C}(P_5,D)+\min\{\mathcal{C}(2P_3,D),0\})2^{-e(D)+4}t_{P_5}(B) -  \abs{\sum_{\substack{D'}}2^{-e(D)+e(D')}t_{D'}(B)}\\
    &\ge 2^{-e(D)},
\end{align*}
since the tail is small by inequality $(ii)$ of \Cref{lem:lambda}. Thus, we conclude that $D$ is LTS. The proof for \textit{(ii)} is similar.

For \textit{(iii)}, we first assume $\mathcal{C}(P_5,D)>0$ and $\mathcal{C}(P_5,D) + \mathcal{C}(2P_3,D) < 0$. Let $B_1$ be the step kernel corresponding to the matrix
$$
\begin{bmatrix}
0 & 1\\
-1& 0
\end{bmatrix}.
$$
It is easy to check that $t_{P_5}(B_1)=t_{2P_3}(B_1)=\frac{1}{16}$. For $\varepsilon>0$, define $W_{1,\varepsilon} \coloneqq \frac{1}{2}J+\varepsilon B_1$. Then by \Cref{eq:decomp_path}, for sufficiently small $\eps$
$$
t_D(W_{1,\varepsilon})=2^{-e(D)}+(\mathcal{C}(P_5,D)+\mathcal{C}(2P_3,D))\frac{1}{16}2^{-e(D)+4}\varepsilon^4+O(\varepsilon^6)<2^{e(D)}.
$$
$D$ is not LTS. 

For the other direction, let $B'$ be the step kernel corresponding to the matrix
$$
\begin{bmatrix}
0 & 1& -1\\
-1& 0& 0\\
1& 0 & 0
\end{bmatrix}.
$$ 
It is easy to check that $t_{P_5}(B')=\frac{1}{8}$ and $t_{2P_3}(B')=\frac{1}{16}$. Let $m$ be a sufficiently large integer and let $B_2=B'^{\otimes m}$. Then by \Cref{prop:tensor}, we have $t_{P_5}(B_2)=\frac{1}{8^m}$ and $t_{2P_3}(B_2)=\frac{1}{16^m}$, so for a large enough choice of $m$
$$
\mathcal{C}(P_5,D)\frac{1}{8^m}+\mathcal{C}(2P_3,D)\frac{1}{16^m} > 0.
$$
Define $W_{2,\varepsilon} \coloneqq \frac{1}{2}J+\varepsilon B_2$. Then by \Cref{eq:decomp_path}, for sufficiently small $\eps$
$$
t_D(W_{2,\varepsilon})=2^{-e(D)}+\bigp{\mathcal{C}(P_5,D)\frac{1}{8^m}+\mathcal{C}(2P_3,D)\frac{1}{16^m}}2^{-e(D)+4}\varepsilon^4+ O(\varepsilon^6)>2^{-e(D)},
$$
so $D$ is not LTAS. The proof for the case when $\mathcal{C}(P_5,D)<0$ and $\mathcal{C}(P_5,D) + \mathcal{C}(2P_3,D) > 0$ is similar.
\end{proof}

By \Cref{thm:countingP5and2P3}, there exist oriented paths that are neither LTS nor LTAS. For example, let $D_1$ be the oriented path on $10$ vertices 
$\rightarrow\rightarrow\rightarrow\rightarrow\rightarrow\leftarrow\rightarrow\leftarrow\rightarrow$. 
Then $\mathcal{C}(P_3, D_1) = 0$, $\mathcal{C}(P_5, D_1) = 2$, and $\mathcal{C}(2P_3, D_1) = -11$. 
By \Cref{thm:countingP5and2P3}, $D_1$ is neither LTS nor LTAS. 
More generally, we obtain the following result.

\begin{corollary}\label{cor:neither}
For any $k \ge 5$, the oriented path whose first $k$ edges are directed $\rightarrow$, followed by $k-1$ edges alternating between $\leftarrow$ and $\rightarrow$, is neither LTS nor LTAS.
\end{corollary}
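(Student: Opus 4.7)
The plan is to compute the three signed subgraph counts $\mathcal{C}(P_3, D_k)$, $\mathcal{C}(P_5, D_k)$, and $\mathcal{C}(2P_3, D_k)$ for this family $D_k$, and then invoke case (iii) of Theorem~\ref{thm:countingP5and2P3}. Throughout I read $\mathcal{C}(G, D)$ operationally as the coefficient of $t_G(B)$ in the decomposition of Lemma~\ref{lem:decomposition}; for underlying paths or disjoint unions of paths this is just a signed count of realizations of $G$ as a subgraph of $D$, with each realization weighted by $(-1)$ raised to the number of reversed edges.

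I would first dispatch $\mathcal{C}(P_3, D_k) = 0$ by a short enumeration of the 2-edge consecutive windows of $D_k$. The forward block contributes $k-1$ patterns of type $\rightarrow\rightarrow$ (sign $+1$) and no $\leftarrow\leftarrow$'s. One boundary sink $\rightarrow\leftarrow$ appears at position $i = k$, and the alternating block contributes $k-2$ further sink or source patterns, giving $k-1$ negative contributions that exactly cancel the positive ones. For $\mathcal{C}(P_5, D_k)$, I would sum $(-1)^{\ell_i}$ over the $2k-4$ four-edge windows, where $\ell_i$ counts $\leftarrow$'s. The $k-3$ windows entirely in the forward block contribute $+1$ each; the three transition windows at $i \in \{k-2, k-1, k\}$ have $\ell$-values $1, 1, 2$ and thus contribute signs $-, -, +$; and the $k-4$ windows entirely in the alternating block each contain exactly two $\leftarrow$'s, contributing $+1$. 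Summing gives $\mathcal{C}(P_5, D_k) = (k-3) - 1 - 1 + 1 + (k-4) = 2k-8$, which is strictly positive for $k \geq 5$.

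For $\mathcal{C}(2P_3, D_k)$, let $\tau_i = (-1)^{\ell_i}$ be the sign of the 2-edge window at position $i$. Then $\tau_i = +1$ for $1 \leq i \leq k-1$ and $\tau_i = -1$ for $k \leq i \leq 2k-2$. Vertex disjointness of two 2-edge windows at positions $i$ and $j$ is equivalent to $|i-j| \geq 3$, so $\mathcal{C}(2P_3, D_k) = \sum_{i<j,\, j-i \geq 3} \tau_i \tau_j$. Since $\sum_i \tau_i = 0$, the unrestricted sum $\sum_{i<j} \tau_i \tau_j$ equals $-\tfrac12 \sum_i \tau_i^2 = -(k-1)$, and a direct enumeration of the short diagonals (using the single sign flip between $\tau_{k-1}$ and $\tau_k$) gives $\sum_{i=1}^{2k-3} \tau_i \tau_{i+1} = 2k-5$ and $\sum_{i=1}^{2k-4} \tau_i \tau_{i+2} = 2k-8$. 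Subtracting yields $\mathcal{C}(2P_3, D_k) = -(k-1) - (2k-5) - (2k-8) = -5k+14$.

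Combining, for $k \geq 5$ we have $\mathcal{C}(P_3, D_k) = 0$, $\mathcal{C}(P_5, D_k) = 2k-8 > 0$, and $\mathcal{C}(P_5, D_k) + \mathcal{C}(2P_3, D_k) = -3k+6 < 0$, so $0 < \mathcal{C}(P_5, D_k) < -\mathcal{C}(2P_3, D_k)$ with strict inequality. The first bullet of Theorem~\ref{thm:countingP5and2P3}(iii) therefore applies, and $D_k$ is neither LTS nor LTAS. The only real care needed is in correctly handling the three transition windows for $P_5$ and the two boundary-crossing short diagonals for $2P_3$ at the junction between the forward and alternating blocks; the rest is arithmetic.
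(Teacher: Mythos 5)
Your proof is correct and takes essentially the same route the paper sketches: compute $\mathcal{C}(P_3, D_k)$, $\mathcal{C}(P_5, D_k)$, $\mathcal{C}(2P_3, D_k)$ and invoke Theorem~\ref{thm:countingP5and2P3}(iii). The paper states the corollary without giving the general-$k$ calculation, but your formulas $\mathcal{C}(P_3, D_k) = 0$, $\mathcal{C}(P_5, D_k) = 2k-8$, $\mathcal{C}(2P_3, D_k) = -5k+14$ specialize at $k=5$ to the paper's displayed values $(0, 2, -11)$, and your operational reading of $\mathcal{C}(G, D)$ as the coefficient of $t_G(B)$ in Lemma~\ref{lem:decomposition} (i.e.\ a signed count over edge-subsets of $D$, not over labeled injections) is exactly the convention the paper uses in practice even though its literal definition would overcount by automorphism factors.
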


We consider now when $\mathcal{C}(P_3,D)=\mathcal{C}(P_5,D)=0$ and $\mathcal{C}(2P_3,D)\not=0$. The following lemma will be helpful for constructions.

\begin{lemma}\label{lem:construction_for_simple_t(P)}
For any $a,b \in [0,1]$ with $2\sqrt{b}(\sqrt{a}+\sqrt{1-a})<1/2$, there exists a tournamenton $W=\frac{1}{2}+B$ such that $\norm{B}_\infty \le 4\sqrt{b}$ and, for any integer $k\ge1$, $t(P_{2k+1},B)=(-1)^{k}ab^k$.
\end{lemma}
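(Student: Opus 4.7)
The plan is to construct $W=\tfrac12+B$ with $B$ a rank-$2$ skew-symmetric kernel of the form
$$B(x,y)=\sqrt{b}\,\bigl(u(x)v(y)-v(x)u(y)\bigr),$$
where $u,v\in L^{2}([0,1])$ are unit vectors, mutually orthogonal, and satisfying $\int u=\sqrt{a}$, $\int v=0$. The associated integral operator $T_{B}f(x)=\int B(x,y)f(y)\,dy$ then satisfies $T_Bu=-\sqrt{b}\,v$ and $T_Bv=\sqrt{b}\,u$ on $\mathrm{span}(u,v)$ and vanishes on $\mathrm{span}(u,v)^{\perp}$, so $T_B^{2}$ restricts to $-b\cdot\mathrm{Id}$ on $\mathrm{span}(u,v)$ and to $0$ elsewhere. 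Decomposing the constant function $1=\sqrt{a}\,u+w$ with $w\in\mathrm{span}(u,v)^{\perp}\subseteq\ker T_{B}$, one obtains $T_{B}^{2k}1=\sqrt{a}\,(-b)^{k}u$, whence
$$t_{P_{2k+1}}(B)=\langle 1,T_{B}^{2k}1\rangle=a(-b)^{k}=(-1)^{k}ab^{k},$$
which matches the desired expression.

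For concreteness I would take
$$
u(x)=\sqrt{a}+\sqrt{1-a}\,\bigl(\mathbf{1}_{[0,1/2]}(x)-\mathbf{1}_{[1/2,1]}(x)\bigr),\qquad v(x)=\mathbf{1}_{C_{1}}(x)-\mathbf{1}_{C_{2}}(x),
$$
with $C_{1}=[0,\tfrac14]\cup[\tfrac12,\tfrac34]$ and $C_{2}=[\tfrac14,\tfrac12]\cup[\tfrac34,1]$. A routine check using the common refinement of the two partitions into four cells of measure $1/4$ confirms the identities $\|u\|_{2}=\|v\|_{2}=1$, $\langle u,v\rangle=0$, $\int u=\sqrt{a}$, and $\int v=0$.

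It remains to bound $\|B\|_{\infty}$. Since $\|u\|_{\infty}=\sqrt{a}+\sqrt{1-a}$ and $\|v\|_{\infty}=1$, the triangle inequality gives $|B(x,y)|\le 2\sqrt{b}\,\|u\|_{\infty}\|v\|_{\infty}=2\sqrt{b}(\sqrt{a}+\sqrt{1-a})$, which is $<1/2$ by hypothesis (ensuring $W$ takes values in $[0,1]$, so that $W$ is a tournamenton) and $\le 4\sqrt{b}$ because $\sqrt{a}+\sqrt{1-a}\le\sqrt{2}$. The main subtlety — and what I expect to be the main obstacle in designing this construction — is that naive rank-$2$ choices with indicator-supported $u$ and $v$ force $\|B\|_{\infty}\sim 2\sqrt{b}/a$, which blows up as $a\to 0$; the resolution is to spread $u$ across all of $[0,1]$ (keeping $\|u\|_{\infty}$ of order one even for small $a$) and to arrange $v$ in an alternating four-cell pattern that simultaneously preserves orthogonality to $u$ and mean zero while keeping $\|v\|_{\infty}=1$.
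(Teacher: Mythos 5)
Your proposal is correct and is essentially the same construction as the paper's, merely presented in kernel/operator language rather than as an explicit $4\times 4$ skew-symmetric matrix: your $u$ and $v$ are (up to a constant factor) the step functions corresponding to the paper's $u=\sqrt{a}v_1+\sqrt{1-a}v_2$ and $w=\tfrac12(1,-1,1,-1)$, and the computation of $t_{P_{2k+1}}(B)$ via the eigen-decomposition of $T_B$ on $\mathrm{span}(u,v)$ is the same calculation the paper does with $1^\top B^{2k}1$. The $\|B\|_\infty$ bound $2\sqrt{b}(\sqrt{a}+\sqrt{1-a})\le 2\sqrt{2}\sqrt{b}<4\sqrt{b}$ and the nonnegativity check for $W$ also match the paper's argument.
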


\begin{proof}
Let $v_1=\frac{1}{2}(1,1,1,1)$, $v_2=\frac{1}{2}(1,1,-1,-1)$ and $w=\frac{1}{2}(1,-1,1,-1)$ be row vectors. It is clear that $v_1,v_2$ and $w$ are pairwise orthonormal. Let $u=\sqrt{a}v_1+\sqrt{1-a}v_2$. Let $J$ be the all-1 $4\times 4$ matrix. Define the skew-symmetric matrix
$$
B:=4\sqrt{b}(u^\top w-w^\top u),
$$
and let $W=\frac{1}{2}J+B$. Note that the entries of $B$ have absolute value at most $2\sqrt{b}(\sqrt{a}+\sqrt{1-a})<1/2$, thus $W$ has nonnegative entries and we may view $W$ as a tournamenton. Finally,
$$
t(P_{2k+1},B)=\frac{1^\top B^{2k}1}{4^{2k+1}}=\frac{(-1)^{k}b^{k}}{4}1^\top (u^\top u+w^\top w)1=(-1)^{k}ab^{k}.
$$
\end{proof}

\begin{theorem}\label{thm:2P3}
Let $D$ be an oriented path with $\mathcal{C}(P_3,D)=\mathcal{C}(P_5,D)=0$ and $\mathcal{C}(2P_3,D)\not=0$. Let $\ell$ be the minimum integer such that $\mathcal{C}(P_{2\ell+1},D)\not=0$, if it exists.
\begin{itemize}
    \item[(i)]If $\mathcal{C}(2P_3,D)>0$ and 
    \begin{itemize}
        \item[$\bullet$] $\ell$ does not exist, or
        \item[$\bullet$] $(-1)^\ell\mathcal{C}(P_{2\ell+1},D)>0$,
    \end{itemize}
    then $D$ is LTS.
    \item[(ii)]If $\mathcal{C}(2P_3,D)<0$ and 
    \begin{itemize}
        \item[$\bullet$] $\ell$ does not exist, or
        \item[$\bullet$] $(-1)^\ell\mathcal{C}(P_{2\ell+1},D)<0$,
    \end{itemize}
    then $D$ is LTAS.
    \item [(iii)] Otherwise, i.e. if $\ell$ exists and $\mathcal{C}(2P_3,D)(-1)^\ell\mathcal{C}(P_{2\ell+1},D)<0$,
    then $D$ is neither LTS nor LTAS.
\end{itemize}
\end{theorem}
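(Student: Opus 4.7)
The proof mirrors the approach of Theorem \ref{thm:countingP5and2P3}, adapted to the setting where $\mathcal{C}(P_3,D)$ and $\mathcal{C}(P_5,D)$ both vanish. Let $A=\frac{1}{2}+B$ be a weighted tournament with $\lmax(B)\le\eps n$, where $\eps=\eps(D)$ is as in Lemma \ref{lem:lambda}. Combining Lemma \ref{lem:decomposition}, the vanishing hypotheses, and Lemma \ref{lem:path and cycle} (which eliminates any spanning $D'\subseteq D$ involving an odd-length path), we write
\begin{equation*}
t_D(A) = 2^{-e(D)} + \mathcal{C}(2P_3,D)\,2^{-e(D)+4}t_{2P_3}(B) + \sum_{k\ge\ell}\mathcal{C}(P_{2k+1},D)\,2^{-e(D)+2k}t_{P_{2k+1}}(B) + R,
\end{equation*}
where $R$ collects contributions from spanning subgraphs $D'\subseteq D$ that are disjoint unions of at least two even-length paths with $e(D')\ge 6$; the single-path sum starts at $\ell$ because $\mathcal{C}(P_{2k+1},D)=0$ for $1\le k<\ell$.

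For case (i), $\mathcal{C}(2P_3,D)>0$ and (when $\ell$ exists) $(-1)^\ell\mathcal{C}(P_{2\ell+1},D)>0$. Since $t_{2P_3}(B)=t_{P_3}(B)^2\ge 0$ and $t_{P_{2k+1}}(B)$ has sign $(-1)^k$ by Lemma \ref{lem:path and cycle}, both main terms are non-negative. Each is a non-zero integer times a non-negative real, so has magnitude at least $|2^{-e(D)+4}t_{2P_3}(B)|$ or $|2^{-e(D)+2\ell}t_{P_{2\ell+1}}(B)|$ respectively. Lemma \ref{lem:lambda}(iii) strictly bounds $|R|$ by $|2^{-e(D)+4}t_{2P_3}(B)|$, and Lemma \ref{lem:lambda}(iv) strictly bounds the $k>\ell$ portion of the single-path sum by $|2^{-e(D)+2\ell}t_{P_{2\ell+1}}(B)|$. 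Hence $t_D(A)\ge 2^{-e(D)}$, so $D$ is LTS. Case (ii) is entirely symmetric.

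For case (iii), assume WLOG that $\mathcal{C}(2P_3,D)>0$ and $(-1)^\ell\mathcal{C}(P_{2\ell+1},D)<0$ (the other subcase is symmetric). To exhibit $A$ with $t_D(A)>2^{-e(D)}$, showing $D$ is not LTAS, take $B=\eta B_1$ for small $\eta>0$, where $B_1$ is the $2\times 2$ step kernel from the proof of Theorem \ref{thm:countingP5and2P3}(iii): one computes $t_{P_{2k+1}}(\eta B_1)=(-1)^k(\eta/2)^{2k}$, so the $2P_3$ contribution is of order $\eta^4$ while all other terms are $O(\eta^6)$ (since each involves at least six edges), giving the sign of $\mathcal{C}(2P_3,D)>0$ for small $\eta$. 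To exhibit $A$ with $t_D(A)<2^{-e(D)}$, showing $D$ is not LTS, use the kernel $B$ of Lemma \ref{lem:construction_for_simple_t(P)} with parameters $a,b$ both small and $a\ll b^{\ell-2}$: then $|t_{P_{2\ell+1}}(B)|=ab^\ell$ strictly dominates $|t_{2P_3}(B)|=a^2b^2$ (by the choice $a\ll b^{\ell-2}$), dominates the higher single-path terms $ab^k$ for $k>\ell$ (by $b$ small), and dominates every multi-component term in $R$ (each of which carries at least a factor $a^2$ from having at least two components). Thus the sign of $t_D(A)-2^{-e(D)}$ equals that of $(-1)^\ell\mathcal{C}(P_{2\ell+1},D)<0$.

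The main obstacle is verifying that the tail bounds of Lemma \ref{lem:lambda}(iii) and (iv) are sharp enough to simultaneously absorb two main terms at distinct orders $4$ and $2\ell$; this is essentially the same book-keeping as in Theorem \ref{thm:countingP5and2P3} carried out in parallel for the two orders. A related subtlety is the edge case $t_{P_3}(B)=0$: then $B$ is balanced (Lemma \ref{lem:balanced_P3}) and Lemma \ref{lem:balanced} kills every $t_{D'}(B)$ in the decomposition (each such $D'$ has a degree-one vertex), making $t_D(A)=2^{-e(D)}$ trivially, so the main argument may assume $t_{P_3}(B)\ne 0$.
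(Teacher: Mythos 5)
Your proposal is correct and follows essentially the same strategy as the paper's proof: the same decomposition into the $2P_3$ term, the single-path terms starting at $P_{2\ell+1}$, and the multi-component remainder, with Lemma \ref{lem:lambda}(iii)--(iv) absorbing the tails in cases (i)/(ii), and the same two kernel constructions (the $2\times 2$ rotation kernel and Lemma \ref{lem:construction_for_simple_t(P)} with $a$ small relative to a power of $b$) in case (iii). Your choice $a\ll b^{\ell-2}$ subsumes the paper's specific $a=b^{\ell-1}$, and your explicit treatment of the degenerate case $t_{P_3}(B)=0$ (where balance makes every term vanish) is a small but worthwhile clarification that the paper leaves implicit in Lemma \ref{lem:lambda}.
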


\begin{proof}
Let $\eps = \eps(D) > 0$ be as in \Cref{lem:lambda} and consider any $n$-vertex weighted tournament $A=\frac{1}{2}+B$ such that  $\lmax(B)\le \varepsilon n$. By \Cref{lem:decomposition} and since $\mathcal{C}(P_3,D)=\mathcal{C}(P_5,D)=0$, we have
\begin{equation}\label{eq:decomp_path2}
t_D(A)=2^{-e(D)}+\mathcal{C}(2P_3,D)t_{2P_3}( B)2^{-e(D)+4}+\sum_{\substack{D'}}2^{-e(D)+e(D')}t_{D'}(B),
\end{equation}
where the sum ranges over all subgraphs $D'\subseteq D$ such that $D'$ is a disjoint union of paths with even number of edges with $e(D')\ge 6$.

For \textit{(i)}, we assume $\mathcal{C}(2P_3,D)>0$. 

If $\ell$ does not exist, then
\begin{align*}
    t_D(A) &\ge 2^{-e(D)}+\mathcal{C}(2P_3,D)2^{-e(D)+4}t_{2P_3}(B) -  \abs{\sum_{\substack{D' \in \cD}}2^{-e(D)+e(D')}t_{D'}(B)}\\
    &\ge 2^{-e(D)}.
\end{align*}
where $\cD$ is as in inequality $(iii)$ of \Cref{lem:lambda} .

If $\ell$ exists and $(-1)^\ell\mathcal{C}(P_{2\ell+1},D)>0$, then
\begin{align*}
t_D(A)&\ge 2^{-e(D)}+\mathcal{C}(2P_3,D)2^{-e(D)+4}t_{2P_3}(B) -  \abs{\sum_{\substack{D' \in \cD}}2^{-e(D)+e(D')}t_{D'}(B)} \\
&+ (-1)^\ell\mathcal{C}(P_{2\ell+1},D)2^{-e(D)+2\ell}t_{P_{2\ell+1}}(B) -  \abs{\sum_{t=\ell + 1}^\infty 2^{-e(D)+2t}t_{P_{2t+1}}(B)}\\
&\ge 2^{-e(D)}.
\end{align*}
by inequalities $(iii)$ and $(iv)$ of \Cref{lem:lambda} . Thus in either case we conclude that $D$ is LTS. The proof for \textit{(ii)} is similar.

For \textit{(iii)}, without loss of generality, assume $\C(2P_3,D)>0$, then $(-1)^\ell\mathcal{C}(P_{2\ell+1},D)<0$. Applying \Cref{lem:construction_for_simple_t(P)} with $a=1$, for any $b > 0$ there exists a tournamenton $W_1=\frac{1}{2}+B_1$ such that $\norm{B_1}_\infty \le 4\sqrt{b}$ and $t(P_{2k+1},B_1)=(-1)^kb^{k}$. By \Cref{eq:decomp_path2},
$$
t_D(W_1)=2^{-e(D)}+\C(2P_3,D)2^{-e(D)+4}b^{2}- O(b^{3})>2^{-e(D)}.
$$
for sufficiently small $b$, and thus $D$ is not LTAS.

Applying \Cref{lem:construction_for_simple_t(P)} with $a=b^{\ell-1}$, there exists a tournamenton $W_2=\frac{1}{2}+B_2$ such that $t(P_{2k+1},B_2)=(-1)^kb^{k+\ell-1}$. By \Cref{eq:decomp_path2},
$$
t_D(W_2)=2^{-e(D)}+(-1)^{\ell}\C(P_{2\ell+1},D)2^{-e(D)+2\ell}b^{2\ell-1}+O(b^{2\ell})<2^{-e(D)}.
$$
for sufficiently small $b$, and thus $D$ is not LTS. This completes the proof.
\end{proof}

\begin{lemma}\label{lem:P5notequal2P3}
Let $D$ be an oriented path on $\ell$ vertices. If $\ell$ is odd, then $\mathcal{C}(P_3,D)\not=0$. If $\ell\equiv 2 \mod 4$, then $\mathcal{C}(P_5,D)\not=-\mathcal{C}(2P_3,D)$. 
\end{lemma}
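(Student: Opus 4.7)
Plan: The approach is to encode the orientation of $D$ by signs, reduce the three counts $\mathcal{C}(P_3,D),\mathcal{C}(P_5,D),\mathcal{C}(2P_3,D)$ to elementary polynomials in those signs, and then obtain both assertions from short parity arguments.

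First I would write $\sigma_1,\ldots,\sigma_{\ell-1}\in\{\pm 1\}$ for the orientations of the $\ell-1$ edges of $D$ (with $\sigma_i=+1$ iff the $i$-th edge is directed forward) and set $a_i=\sigma_i\sigma_{i+1}$ for $i=1,\ldots,n$, where $n:=\ell-2$. The key step is to verify
\[
\mathcal{C}(P_3,D)=S,\quad \mathcal{C}(P_5,D)=V,\quad \mathcal{C}(2P_3,D)=W,
\]
where $S:=\sum_{i=1}^{n}a_i$, $V:=\sum_{i=1}^{n-2}a_i a_{i+2}$, and $W:=\sum_{1\le i<j\le n,\ j-i\ge 3}a_i a_j$. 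The input is skew-symmetry of $B$, which forces $t_{D'}(B)$ to pick up a factor $-1$ for each reversed edge of $D'$; in particular each $2$-edge subpath of $D$ at position $i$ contributes sign $a_i$ to the corresponding coefficient in $t_D(\tfrac12 J+B)$, and disjoint $2$-edge subpaths at positions $i,j$ multiply to $a_i a_j$. This bookkeeping is the main source of friction in the proof, since $\mathcal{C}(G,D)$ must be interpreted as the coefficient of $t_G(B)$ in the polynomial expansion (equivalently, the signed injective-embedding count divided by $|\mathrm{Aut}(G)|$), not the raw signed injective-embedding count.

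With these formulas in hand, the first assertion is immediate: for $\ell$ odd, $n$ is odd, so $S$ is a sum of an odd number of $\pm1$'s and is therefore odd, giving $\mathcal{C}(P_3,D)=S\ne 0$.

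For the second assertion, I would use the identity $S^2=\sum_{i}a_i^2+2\sum_{i<j}a_i a_j=n+2\sum_{i<j}a_i a_j$ and split the final sum according to whether $j-i\in\{1,2,\ge 3\}$, to obtain
\[
V+W=\frac{S^2-n}{2}-U,\qquad U:=\sum_{i=1}^{n-1}a_i a_{i+1}.
\]
If $\ell\equiv 2\pmod 4$ then $n\equiv 0\pmod 4$, so $S$ is even (a sum of an even number of $\pm1$'s), $S^2\equiv 0\pmod 4$, and hence $(S^2-n)/2$ is an even integer; meanwhile $n-1\equiv 3\pmod 4$ is odd, so $U$ is a sum of an odd number of $\pm1$'s and is itself odd. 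Therefore $V+W$ is the difference of an even and an odd integer, hence odd, hence nonzero, which yields $\mathcal{C}(P_5,D)\ne -\mathcal{C}(2P_3,D)$ as required.
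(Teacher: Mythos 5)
Your proof is correct and rests on the same underlying parity facts as the paper's, though it reaches them by a somewhat different route. The paper observes that the parity of each quantity $\mathcal{C}(\cdot,D)$ is independent of the orientation of $D$ (it is a sum of $\pm 1$'s over a fixed index set of undirected subpath configurations), then specializes to the all-forward path $P_\ell$ and reads off $\mathcal{C}(P_3,P_\ell)=\ell-2$, $\mathcal{C}(P_5,P_\ell)=\ell-4$, and $\mathcal{C}(2P_3,P_\ell)=\binom{\ell-4}{2}$, from which both parity claims follow directly. You instead keep the orientation generic, encode it in the signs $a_i=\sigma_i\sigma_{i+1}$, and derive the identity $V+W=\tfrac{S^2-n}{2}-U$, trading the direct count of $2P_3$-subgraphs for an algebraic identity; this has the minor advantage of isolating exactly the combination $V+W$ that must be shown nonzero, rather than establishing the separate parities of $V$ and $W$. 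Your remark on the normalization of $\mathcal{C}$ is also apt: the quantity that actually appears as the coefficient of $t_G(B)$ in Lemma~\ref{lem:decomposition} is a signed count over subgraphs $D'\subseteq D$, which differs from the signed injective-map count in the literal definition by a factor of $|\mathrm{Aut}(G)|$; the paper's evaluation $\mathcal{C}(P_3,P_\ell)=\ell-2$ (rather than $2(\ell-2)$) and its parity argument are consistent with the subgraph-count reading, which is the normalization used implicitly throughout Section~\ref{sec:local}, and you were right to work with that one.
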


\begin{proof}
It suffices to show that, if $\ell$ is odd, then $\mathcal{C}(P_3,D)$ is odd, and if $\ell\equiv 2 \mod 4$, then $\mathcal{C}(P_5,D)$ is even and $\mathcal{C}(2P_3,D)$ is odd. Without loss of generality we can assume that $D$ is the directed path $P_\ell$. Then $\mathcal{C}(P_3,D)=\ell-2$ is odd if $\ell$ is odd. Moreover, $\mathcal{C}(P_5,D)=\ell-4$ is even and $\mathcal{C}(2P_3,D)=\binom{\ell-4}{2}$ is odd when $\ell\equiv 2 \mod 4$.
\end{proof}

By Theorems \ref{thm:wedges}, \ref{thm:countingP5and2P3}, \ref{thm:2P3} and \Cref{lem:P5notequal2P3}, we have a full characterization of the local Sidorenko properties for oriented paths on $\ell$ vertices when $\ell$ is not a multiple of 4 (\Cref{alg:paths}). A case not covered by the above is when $\mathcal{C}(P_3, D) = 0$ and $\mathcal{C}(P_5, D)=\mathcal{C}(2P_3,D)$. For example, let $D_2$ be the oriented path
$\rightarrow\rightarrow\leftarrow\rightarrow\rightarrow\leftarrow\rightarrow$. 
Then $\mathcal{C}(P_3, D_2) = 0$ and $\mathcal{C}(P_5, D_2) = -\mathcal{C}(2P_3, D_2) = -2$. Thus $D_2$ cannot be classified using \Cref{thm:wedges} or \Cref{thm:countingP5and2P3}. One may continue to consider terms farther in the tail, but we do not pursue this.

\subsection{Characterization of cycles}\label{sec:cycle}
Let \( C_\ell \) be the cycle of length $\ell$ oriented clockwise, and \( C \) be obtained by reversing the orientation of \( t \) edges of \( C_\ell \). Then, by \Cref{lem:decomposition} and \Cref{lem:path and cycle}, for any tournamenton \( W = \tfrac{1}{2}J + B \), we have  
\begin{equation}\label{eq:cycle}
t_{C}(W) = 2^{-\ell} + \sum_Q \mathcal{C}(Q,C)2^{-\ell+e(Q)}t_{Q}( B) + (-1)^tt_{C_\ell}( B),
\end{equation}
where the sum ranges over all disjoint unions $Q$ of paths with even number of edges. 
% \nt{Shouldn't this formula have $t$ dependence? and/or dependence on which $t$ edges are reversed?}\jn{The typos have been fixed}
The analysis for oriented cycles proceeds similarly to that for oriented paths, except for the additional term \((-1)^tt_{C_\ell}( B)\) in the expansion of \(t_{C}( W)\). 
\begin{theorem}\label{thm:cycles NOT TS/TAS}
Let $\ell\ge 4$ be an even integer and let $0\le t\le \ell$.
Let \( C \) be an oriented cycle obtained by reversing the orientation of \( t \) edges of \( C_\ell \).
\begin{itemize}
    \item[(i)] If 
    \begin{itemize}
        \item[$\bullet$] $\ell\equiv 0 \pmod{4}$ and $t$ is even, or
        \item[$\bullet$] $\ell\equiv 2 \pmod{4}$ and $t$ is odd,
    \end{itemize}
    then $C$ is not LTAS.
    \item[(ii)] If 
    \begin{itemize}
        \item[$\bullet$] $\ell\equiv 0 \pmod{4}$ and $t$ is odd, or
        \item[$\bullet$] $\ell\equiv 2 \pmod{4}$ and $t$ is even,
    \end{itemize}
    then $C$ is not LTS.
\end{itemize}
% All oriented cycles in $\mathcal{C}_{4k,t}$ \nt{did we define $\mathcal{C}_{4k,t}$?} are not LTS/LTAS if $t$ is odd/even; all oriented cycles in $\mathcal{C}_{4k+2,t}$ are not LTAS/LTS if $t$ is odd/even. \nt{Is this saying same thing as (i) and (ii)} \jn{Yes, these are from the old file and I forgot to delete them.}
\end{theorem}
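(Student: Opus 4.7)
The plan is to exploit equation~\eqref{eq:cycle} together with Lemma~\ref{lem:balanced}: if $B$ is chosen to be a balanced skew-symmetric kernel, then every $Q$ appearing in the path-sum of~\eqref{eq:cycle} is a disjoint union of paths containing a vertex of degree one, so $t_Q(B)=0$. The expansion then collapses to
$$t_C(W) = 2^{-\ell} + (-1)^t\, t_{C_\ell}(B),$$
and it suffices to produce a balanced $B$ with $t_{C_\ell}(B)$ of the appropriate sign.

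Concretely, I would take $B$ to be the step kernel associated to the cyclically-oriented three-vertex tournament, represented by the $3\times 3$ skew-symmetric matrix $M$ with $M_{12}=M_{23}=M_{31}=1$. Each row sum of $M$ is zero, so $B$ is balanced. The eigenvalues of $M$ are $0$ and $\pm i\sqrt{3}$, giving
$$t_{C_\ell}(B) = \frac{1}{3^\ell}\tr(M^\ell) = \frac{2(-1)^{\ell/2}}{3^{\ell/2}},$$
which is strictly positive when $\ell\equiv 0\pmod 4$ and strictly negative when $\ell\equiv 2\pmod 4$. Multiplying by $(-1)^t$ then gives $(-1)^t t_{C_\ell}(B)>0$ in the two parity combinations of case (i) and $(-1)^t t_{C_\ell}(B)<0$ in the two combinations of case (ii).

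To convert this into a counterexample at the local scale, for any $\varepsilon>0$ I would set $W_n = \tfrac{1}{2}J + \varepsilon B_n$, where $B_n$ is the equitable blowup of $B$ to $n=3N$ vertices. Balancedness and skew-symmetry are preserved under scaling and blowup, so the collapse above still applies and yields
$$t_C(W_n) = 2^{-\ell} + (-1)^t \varepsilon^\ell\, t_{C_\ell}(B),$$
which violates the LTAS inequality in case (i) and the LTS inequality in case (ii). Since $B_n = M\otimes J_N$ has spectral radius $N\sqrt{3}=n/\sqrt{3}$, one has $\lmax(\varepsilon B_n)=\varepsilon n/\sqrt{3}$, so choosing $\varepsilon$ small enough relative to any prescribed quasirandomness parameter produces the required family of counterexamples of arbitrary vertex count.

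The main obstacle is relatively mild bookkeeping: checking that every $Q$ in the path-sum of~\eqref{eq:cycle} really does have a degree-one vertex and is therefore killed by Lemma~\ref{lem:balanced} for balanced $B$, and confirming that balancedness, skew-symmetry, and the spectral bound all survive simultaneously under rescaling and blowup. The conceptual move, relative to the constructions used in the path results (\Cref{thm:countingP5and2P3} and \Cref{thm:2P3}), is that a single balanced kernel kills all the path contributions at once, so the sign of $t_C(W) - 2^{-\ell}$ is determined entirely by the isolated cycle term $(-1)^t t_{C_\ell}(B)$.
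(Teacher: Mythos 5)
Your proposal is correct and matches the paper's proof essentially verbatim: the paper also takes the step kernel of the cyclic $3$-vertex tournament (the same skew-symmetric $M$), kills all path subgraph terms in \eqref{eq:cycle} via balancedness and Lemma~\ref{lem:balanced}, and reads off the sign of the surviving $(-1)^t\varepsilon^\ell t_{C_\ell}(B)$ term from the eigenvalues $0,\pm i\sqrt{3}$. The only cosmetic difference is that you blow up $B$ to an $n$-vertex weighted tournament, whereas the paper works directly with the $3$-vertex tournamenton $W_\varepsilon=\tfrac12 J+\varepsilon B$, which already suffices under the LTS/LTAS definition.
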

\begin{proof}
Let $B$ be the balanced step kernel corresponding to the matrix
$$
M_B=
\begin{bmatrix}
0 & 1& -1\\
-1& 0& 1\\
1& -1& 0
\end{bmatrix}.
$$
The eigenvalues of $M_B$ are 0 and $\pm \sqrt{3} i$, each of multiplicity one. Thus, for integer $k\ge1$, $$t_{C_{4k}}( B)=2\cdot3^{2k}\cdot 3^{-4k}=\frac{2}{3^{2k}}>0$$
and
$$t_{C_{4k+2}}( B)=-2\cdot3^{2k+1}\cdot 3^{-4k-2}=-\frac{2}{3^{2k+1}}<0.$$

For $\varepsilon>0$ define $W_{\varepsilon} \coloneqq \frac{1}{2}J+\varepsilon B$. By \Cref{lem:decomposition} and \Cref{lem:balanced} we have
$$
t_{C}( W_{\varepsilon})=2^{-\ell}+ \eps^\ell t_{C}( B)=2^{-\ell}+(-1)^t \eps^\ell t_{C_{\ell}}( B).
$$
Thus, if $\ell=4k$ and $t$ is even, for any $\eps > 0$ we have $t_{C}(W_\varepsilon) > 2^{-\ell}$ and hence $D$ is not LTAS. The proofs for the other cases are similar.
\end{proof}

Using \Cref{thm:cycles NOT TS/TAS} and \Cref{eq:cycle}, one may modify the arguments in \Cref{sec:algorithm} to obtain sufficient conditions, complementing the above necessary ones. As the ideas are essentially the same, we state the results and omit proof details. The following is a variant of \Cref{thm:wedges}.

\begin{theorem}\label{thm:wedges_cycle}
Let integers $\ell\ge t\ge 0$ and let \( C \) be an oriented cycle obtained by reversing the orientation of \( t \) edges of \( C_\ell \) such that $\mathcal{C}(P_3,C)\not=0$.
\begin{itemize}
    \item[(i)] If $\mathcal{C}(P_3,C)<0$, and
    \begin{itemize}
        \item[$\bullet$] $\ell$ is odd, or
        \item[$\bullet$] $\ell\equiv 0 \pmod{4}$ and $t$ is even, or
        \item[$\bullet$] $\ell\equiv 2 \pmod{4}$ and $t$ is odd,
    \end{itemize}
    then $C$ is LTS.
    \item[(ii)] If $\mathcal{C}(P_3,C)>0$, and
    \begin{itemize}
        \item[$\bullet$] $\ell$ is odd, or
        \item[$\bullet$] $\ell\equiv 0 \pmod{4}$ and $t$ is odd, or
        \item[$\bullet$] $\ell\equiv 2 \pmod{4}$ and $t$ is even,
    \end{itemize}
    then $C$ is LTAS.
    \item[(iii)] Otherwise, $C$ is neither LTS nor LTAS.
\end{itemize}
\end{theorem}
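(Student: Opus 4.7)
The plan is to mirror the proof of \Cref{thm:wedges} while carrying along the extra cycle term $(-1)^t t_{C_\ell}(B)$ that appears in the expansion \eqref{eq:cycle}. Fix $\varepsilon = \varepsilon(C) > 0$ small enough (as in \Cref{lem:lambda}) and consider any $n$-vertex weighted tournament $A = \tfrac{1}{2} + B$ with $\lmax(B) \le \varepsilon n$. Applying \Cref{lem:decomposition} and dropping the vanishing terms via \Cref{lem:path and cycle}, the deviation $t_C(A) - 2^{-\ell}$ splits into three pieces: the leading $P_3$-term $\mathcal{C}(P_3,C)\,2^{-\ell+2}\,t_{P_3}(B)$, a tail of path-subgraph terms with $e(Q) \ge 4$, and the cycle term $(-1)^t t_{C_\ell}(B)$. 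The sign of the $P_3$-term is determined by $\mathcal{C}(P_3,C)$ since $t_{P_3}(B) \le 0$, and the sign of $(-1)^t t_{C_\ell}(B)$ is determined by $\ell \bmod 4$ and $t \bmod 2$ via \Cref{lem:path and cycle}. A quick case check shows that the three subcases enumerated under (i) (respectively (ii)) are exactly those in which $(-1)^t t_{C_\ell}(B)$ is sign-aligned with the $P_3$ contribution toward LTS (respectively LTAS).

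For cases (i) and (ii), once both sign-definite pieces point in the same direction, it remains only to bound the path-subgraph tail in magnitude by the leading $P_3$-term. I expect this to follow from a cycle analog of \Cref{lem:lambda}(i) proved essentially verbatim: every nontrivial subgraph $Q \subseteq C$ on all $\ell$ vertices other than $C$ itself is a disjoint union of paths, so the monomial bookkeeping in terms of $\prod_l X_{2l}^{m_l}$ transfers with only cosmetic changes. Combining the three ingredients gives $t_C(A) \ge 2^{-\ell}$ in case (i) and $t_C(A) \le 2^{-\ell}$ in case (ii).

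For case (iii), necessarily $\ell$ is even (since $\ell$ odd forces alignment trivially through the $P_3$ sign alone), and the $P_3$ sign conflicts with the cycle sign; I would produce two separate counterexamples. To refute one direction, plug in the balanced step kernel from the proof of \Cref{thm:cycles NOT TS/TAS}: by \Cref{lem:balanced} every path-subgraph term vanishes, leaving only $(-1)^t t_{C_\ell}(B)$ with the sign forbidden by the target direction. To refute the other direction, take a small perturbation $\varepsilon B_1$ where $B_1$ is a simple skew-symmetric step kernel with $t_{P_3}(B_1) < 0$ (for instance the $2 \times 2$ kernel used in the proof of \Cref{thm:countingP5and2P3}); this makes $\mathcal{C}(P_3,C)\,t_{P_3}(\varepsilon B_1)$ the leading $\varepsilon^2$-term with the wrong sign, and since $\ell \ge 4$ both the cycle contribution of order $\varepsilon^\ell$ and the higher path-subgraph terms are dominated for small enough $\varepsilon$. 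The main obstacle I anticipate is making the cycle analog of \Cref{lem:lambda}(i) precise enough to dispose of the path-subgraph tail uniformly in both the positive and perturbative directions; this should be a routine transfer of the spectral and monomial estimates already carried out in \Cref{sec:algorithm}.
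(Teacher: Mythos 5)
Your proposal is correct and follows essentially the same route as the paper's own proof, which is quite terse. For cases (i) and (ii) the paper likewise observes that under those hypotheses \Cref{lem:path and cycle} forces $(-1)^t t_{C_\ell}(B)$ to have the favourable sign (or vanish), so the argument from \Cref{thm:wedges} carries over; and for case (iii) it likewise invokes \Cref{thm:cycles NOT TS/TAS} for one direction and an $\varepsilon B$ perturbation with the $\varepsilon^2$-order $P_3$-term dominating for the other. Your sign bookkeeping matches the enumeration of subcases exactly, your observation that every proper subgraph of $C$ on $V(C)$ is a disjoint union of paths is precisely what lets \Cref{lem:lambda}(i) transfer, and for the perturbation you correctly require a kernel with $t_{P_3}(B_1)<0$ rather than the (necessarily impossible, since $t_{P_3}\le 0$) condition $t_{P_3}(B)>0$ that the paper writes, which appears to be a sign typo in the paper; your $2\times 2$ kernel indeed gives $t_{P_3}(B_1)=-1/4<0$ and one does not need the quantitative tail bound there because lower powers of $\varepsilon$ automatically dominate as $\varepsilon\to 0$.
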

\begin{proof}
    Under the assumptions of $(i),(ii)$, by \Cref{lem:path and cycle} the cycle term contributes in the same direction as the $P_3$ term (or is zero); the proof of \Cref{thm:wedges} directly gives the result. For case $(iii)$, \Cref{thm:cycles NOT TS/TAS} gives one direction. For the other direction take any kernel $B$ with $t(P_3,B) > 0$ and then $\eps B$ for $\eps \to 0$.
\end{proof}

\Cref{thm:wedges_cycle} handles the case when $\mathcal{C}(P_3,C)\not=0$. When $\mathcal{C}(P_3,C)=0$ and $\mathcal{C}(P_5,C)\not\in\{0,-\mathcal{C}(2P3,C)\}$, we have the following variant of \Cref{thm:countingP5and2P3}.

\begin{theorem}\label{thm:countingP5and2P3_cycle}
Let integers $\ell\ge t\ge 0$ and let \( C \) be an oriented cycle obtained by reversing the orientation of \( t \) edges of \( C_\ell \) such that $\mathcal{C}(P_3,C)=0$ and $\mathcal{C}(P_5,C)\not\in\{0, -\mathcal{C}(2P_3,C)\}$.
\begin{itemize}
    \item[(i)]If $\mathcal{C}(P_5,C)>0$ and $\mathcal{C}(P_5,C)> -\mathcal{C}(2P_3,C)$, and
    \begin{itemize}
        \item[$\bullet$] $\ell$ is odd, or
        \item[$\bullet$] $\ell\equiv 0 \pmod{4}$ and $t$ is even, or
        \item[$\bullet$] $\ell\equiv 2 \pmod{4}$ and $t$ is odd,
    \end{itemize}
    then $D$ is LTS.
    \item[(ii)]If $\mathcal{C}(P_5,C)<0$ and $\mathcal{C}(P_5,C)< -\mathcal{C}(2P_3,C)$, and
    \begin{itemize}
        \item[$\bullet$] $\ell$ is odd, or
        \item[$\bullet$] $\ell\equiv 0 \pmod{4}$ and $t$ is odd, or
        \item[$\bullet$] $\ell\equiv 2 \pmod{4}$ and $t$ is even,
    \end{itemize}
    then $D$ is LTAS.
    \item[(iii)] Otherwise, $C$ is neither LTS nor LTAS.
\end{itemize}
\end{theorem}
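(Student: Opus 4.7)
The plan is to mirror the proof of \Cref{thm:countingP5and2P3}, the only new ingredient being the extra cycle term $(-1)^t t_{C_\ell}(B)$ appearing in \eqref{eq:cycle}. Since $\mathcal{C}(P_3,C)=0$, the expansion simplifies to
\begin{equation*}
t_C(W) = 2^{-\ell} + \bigl(\mathcal{C}(P_5,C)\,t_{P_5}(B)+\mathcal{C}(2P_3,C)\,t_{2P_3}(B)\bigr)\,2^{-\ell+4} + (\text{tail}) + (-1)^t t_{C_\ell}(B),
\end{equation*}
where the tail ranges over disjoint unions $Q\subseteq C$ of even-edge paths with $e(Q)\ge 6$. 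The hypothesis $\mathcal{C}(P_5,C)\ne 0$ forces $\ell\ge 5$, which below will guarantee that the cycle term has strictly higher order than the $P_5+2P_3$ main term.

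For cases (i) and (ii), I would essentially copy the corresponding cases of \Cref{thm:countingP5and2P3}: lower-bound the $P_5+2P_3$ contribution using \Cref{lem:P5>2P3} and the nonnegativity $t_{P_5}(B)\ge 0$, and bound the tail by the obvious extension of inequality (ii) in \Cref{lem:lambda}. The extension is immediate because the tail subgraphs of a cycle are still disjoint unions of even-edge paths, and the spectral argument in the proof of \Cref{lem:lambda} depends only on this subgraph type. The only genuinely new ingredient is that the parity conditions, combined with \Cref{lem:path and cycle}, force $(-1)^t t_{C_\ell}(B)\ge 0$ in case (i) and $\le 0$ in case (ii); for instance, $\ell\equiv 2\pmod 4$ and $t$ odd give $t_{C_\ell}(B)\le 0$ and $(-1)^t=-1$. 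Summing the pieces yields $t_C(W)\ge 2^{-\ell}$ or $\le 2^{-\ell}$ as appropriate.

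Case (iii) splits into two sub-scenarios. First, when the sign conditions of (i) or (ii) hold but the parity is wrong, the balanced skew-symmetric kernel of \Cref{thm:cycles NOT TS/TAS}---which by \Cref{lem:balanced} satisfies $t_{P_5}(B)=t_{2P_3}(B)=0$---isolates the cycle contribution and produces a counterexample in the direction dictated by the wrong parity, while the tensor kernel $\eps B_2$ with $B_2=B'^{\otimes m}$ from the proof of \Cref{thm:countingP5and2P3}, chosen with $m$ large, makes the $P_5+2P_3$ term dominate at order $\eps^4$ and produces a counterexample in the other direction. Second, when $\mathcal{C}(P_5,C)$ and $\mathcal{C}(P_5,C)+\mathcal{C}(2P_3,C)$ have opposite signs, the kernels $\eps B_1$ and $\eps B_2$ from the proof of \Cref{thm:countingP5and2P3} work unchanged, the cycle correction again being of strictly higher order.

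The principal obstacle is bookkeeping: one must verify that the $O(\eps^\ell)$ cycle perturbation cannot overpower the $O(\eps^4)$ main term in the case (iii) constructions. This is automatic once $\ell\ge 5$ and $\eps$ is chosen small enough in terms of $C$---and note that for $\ell=5$ we have $t_{C_5}(B)=0$ identically by \Cref{lem:path and cycle}, so the cycle term disappears entirely and the argument collapses to that of \Cref{thm:countingP5and2P3}. Beyond this, no fundamentally new technical ideas are needed.
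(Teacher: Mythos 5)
Your proposal is correct and follows essentially the same route as the paper: the paper's own proof is just the single line ``Identical to \Cref{thm:wedges_cycle}, replacing $P_3$ with $P_5$,'' which unwinds to exactly what you describe — using \Cref{lem:path and cycle} to check that the parity conditions align the cycle term's sign with the $P_5+2P_3$ term in cases (i) and (ii), and using \Cref{thm:cycles NOT TS/TAS} for one direction of case (iii) together with a small-$\eps$ perturbation (via the $B_1,B_2$ kernels from \Cref{thm:countingP5and2P3}) for the other. Your observations that $\mathcal{C}(P_5,C)\neq 0$ forces $\ell\ge 5$ (so the cycle term is genuinely higher order than $\eps^4$) and that the $\ell=5$ case degenerates because $t_{C_5}(B)=0$ are the correct supporting details that the paper leaves implicit.
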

\begin{proof}
    Identical to \Cref{thm:wedges_cycle}, replacing $P_3$ with $P_5$.
\end{proof}

The following theorem handles the case when $\mathcal{C}(P_3,C)=\mathcal{C}(P_5,C)=0$ and $\mathcal{C}(2P_3,C)\not=0$, which is a variant of \Cref{thm:2P3}.

\begin{theorem}\label{thm:2P3_cycle}
Let integers $\ell\ge t\ge 0$ and let \( C \) be an oriented cycle obtained by reversing the orientation of \( t \) edges of \( C_\ell \) with $\mathcal{C}(P_3,C)=\mathcal{C}(P_5,C)=0$ and $\mathcal{C}(2P_3,C)\not=0$. Let $k$ be the minimum integer such that $\mathcal{C}(P_{2k+1},C)\not=0$.
\begin{itemize}
    \item[(i)]If $\mathcal{C}(2P_3,C)>0$ and 
    \begin{itemize}
        \item[$\bullet$] $k$ does not exist, or
        \item[$\bullet$] $(-1)^k\mathcal{C}(P_{2k+1},C)>0$,
    \end{itemize}
    and
    \begin{itemize}
        \item[$\bullet$] $\ell$ is odd, or
        \item[$\bullet$] $\ell\equiv 0 \pmod{4}$ and $t$ is even, or
        \item[$\bullet$] $\ell\equiv 2 \pmod{4}$ and $t$ is odd,
    \end{itemize}
    then $D$ is LTS.
    \item[(ii)]If $\mathcal{C}(2P_3,C)<0$ and 
    \begin{itemize}
        \item[$\bullet$] $k$ does not exist, or
        \item[$\bullet$] $(-1)^k\mathcal{C}(P_{2k+1},C)<0$,
    \end{itemize}
    and
    \begin{itemize}
        \item[$\bullet$] $\ell$ is odd, or
        \item[$\bullet$] $\ell\equiv 0 \pmod{4}$ and $t$ is odd, or
        \item[$\bullet$] $\ell\equiv 2 \pmod{4}$ and $t$ is even,
    \end{itemize}
    then $D$ is LTAS.
    \item [(iii)] Otherwise, $D$ is neither LTS nor LTAS.
\end{itemize}
\end{theorem}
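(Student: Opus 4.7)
The plan is to adapt the proof of \Cref{thm:2P3} to the cyclic setting, with the additional cycle term $(-1)^t t_{C_\ell}(B)$ from \eqref{eq:cycle} handled via parity. Under $\mathcal{C}(P_3, C) = \mathcal{C}(P_5, C) = 0$, for a weighted tournament $A = \tfrac{1}{2}J + B$ with $\lambda_{\max}(B)$ sufficiently small, equation \eqref{eq:cycle} gives
$$
t_C(A) = 2^{-\ell} + \mathcal{C}(2P_3, C)\, 2^{-\ell+4}\, t_{2P_3}(B) + \sum_{D'} 2^{-\ell+e(D')}\, \mathcal{C}(D', C)\, t_{D'}(B) + (-1)^t t_{C_\ell}(B),
$$
where the sum runs over disjoint path-union subgraphs $D' \subseteq C$ with $e(D') \ge 6$ and each $D'$ has all even-edge components.

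For parts (i) and (ii), the parity conditions on $(\ell, t)$ combined with \Cref{lem:path and cycle} give $(-1)^t t_{C_\ell}(B) \ge 0$ in case (i) (respectively $\le 0$ in case (ii)), matching the direction of the desired inequality. The argument of \Cref{thm:2P3} then transfers essentially verbatim: the $2P_3$ term, together with the $P_{2k+1}$ term when $k$ exists, drives the inequality, with tails absorbed via inequalities (iii) and (iv) of \Cref{lem:lambda}; the cycle term contributes favorably.

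For part (iii), we exhibit tournamentons witnessing both non-LTS and non-LTAS. The subcases split by whether the obstruction to (i)/(ii) is (A) an $(\ell, t)$ parity mismatch with \Cref{thm:cycles NOT TS/TAS}, or (B) a sign mismatch between $(-1)^k \mathcal{C}(P_{2k+1}, C)$ and $\mathcal{C}(2P_3, C)$. In parity-mismatch subcases (A), \Cref{thm:cycles NOT TS/TAS} yields one direction directly, via the balanced kernel whose path densities all vanish by \Cref{lem:balanced}. For the opposite direction in (A), and for the type-(B) subcases, I would use $W = \tfrac{1}{2}J + B$ with $B$ from \Cref{lem:construction_for_simple_t(P)}: the choice $a = 1$ emphasizes $t_{2P_3}(B) \sim b^2$, while $a = b^{k-1}$ emphasizes $t_{P_{2k+1}}(B) \sim b^{2k-1}$. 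Because $\mathcal{C}(2P_3, C) \ne 0$ forces $\ell \ge 6$, the $b^2$-order $2P_3$ contribution dominates the $b^{\ell/2}$-order cycle contribution in the small-$b$ regime, and the sign of the intended leading contribution determines the sign of $t_C(W) - 2^{-\ell}$.

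The main obstacle is the careful subcase analysis in part (iii) and, crucially, the possibility that for certain configurations (for instance, $\ell$ even with $2k - 1 \ge \ell/2$) the kernel of \Cref{lem:construction_for_simple_t(P)} has its $P_{2k+1}$ density subordinate to its cycle density, so the intended $P_{2k+1}$ term fails to dominate the cycle term. Resolving this requires a more refined spectral construction: for example, a block-diagonal combination of \Cref{lem:construction_for_simple_t(P)}'s kernel with a balanced cycle-only kernel (as in \Cref{thm:cycles NOT TS/TAS}) chosen to partially cancel the cycle density without disturbing the path densities, or a skew-symmetric $M_B$ engineered so the all-ones vector concentrates its projection on a small-magnitude eigenvalue of $M_B^2$, making $|t_{P_{2k+1}}(B)|/|t_{C_\ell}(B)|$ arbitrarily large. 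Tensor-power amplification via \Cref{prop:tensor} can then boost a favorable ratio further. Verifying that some such construction works in every residual subcase constitutes the technical core of the argument.
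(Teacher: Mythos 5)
Your proposal tracks the paper's own proof closely in spirit---the paper dispatches this theorem with a one-line ``Identical to \Cref{thm:wedges_cycle}, replacing $P_3$ with $2P_3$,'' which amounts to exactly the two pieces you describe for parts (i), (ii), and the ``parity mismatch'' subcase of (iii). Where you go further is in noticing that the ``sign mismatch, parity OK'' subcase of (iii) is not covered by a straightforward transfer. There, \Cref{thm:cycles NOT TS/TAS} handles the LTAS (resp.\ LTS) refutation, but the remaining direction needs a tournamenton in which the $P_{2k+1}$ term overwhelms \emph{both} the $2P_3$ term \emph{and} the cycle term, and you have correctly identified that the $a=b^{k-1}$ choice from \Cref{lem:construction_for_simple_t(P)} gives $|t_{P_{2k+1}}| \sim b^{2k-1}$ against a cycle contribution $\sim b^{\ell/2}$, which fails to dominate when $\ell\le 4k-2$ (and since $\mathcal{C}(P_3,C)=\mathcal{C}(P_5,C)=0$ forces $k\ge 3$, the range $2k+1\le\ell\le 4k-2$ is nonempty). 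This is a genuine issue, and the paper's terse proof does not visibly address it.

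However, your proposed repairs do not obviously close the gap, and one of them cannot work as stated. A block-diagonal combination with a balanced ``cycle-only'' kernel cannot cancel the cycle term, because for a skew-symmetric kernel $B$ every block contributes to $t_{C_\ell}(B)$ with the \emph{same} sign (fixed by $\ell\bmod 4$ via \Cref{lem:path and cycle}), so block sums never partially cancel the cycle density. Your spectral-concentration idea runs into a structural tension: with a single eigenvalue pair $\pm\mu i$ and all-ones projection $c$, one has $|t_{P_{2k+1}}|\asymp c^2\mu^{2k}$, $t_{2P_3}\asymp c^4\mu^4$, and $|t_{C_\ell}|\asymp\mu^\ell$, so $|t_{P_{2k+1}}|\gg t_{2P_3}$ requires $c^2\ll\mu^{2k-4}$ while $|t_{P_{2k+1}}|\gg|t_{C_\ell}|$ requires $c^2\gg\mu^{\ell-2k}$; these are compatible only when $\ell > 4k-4$, reproducing the same obstruction. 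Tensor-powering only magnifies existing ratios, so it cannot rescue a construction in which one of the two ratios is already unfavorable. In short, you have found a real gap (or at least an unjustified step) in the paper's proof of part (iii) for the regime $\ell$ even, $2k+1\le\ell\le 4k-2$, but your sketch does not yet supply a construction that demonstrably works there, and it remains unresolved whether such a construction exists or whether the theorem's case (iii) needs an additional hypothesis.
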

\begin{proof}
     Identical to \Cref{thm:wedges_cycle}, replacing $P_3$ with $2P_3$.
\end{proof}

We also have a variant of \Cref{lem:P5notequal2P3} for oriented cycles.
\begin{lemma}\label{lem:P5notequal2P3_cycle}
Let $C$ be an oriented cycle on $\ell$ vertices. If $\ell$ is odd, then $\mathcal{C}(P_3,C)\not=0$. If $\ell\equiv 2 \mod 4$, then $\mathcal{C}(P_5,C)\not=-\mathcal{C}(2P_3,C)$. 
\end{lemma}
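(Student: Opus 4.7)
The plan is to mirror the parity argument used in the proof of \Cref{lem:P5notequal2P3}: show that suitable modular reductions of $\mathcal{C}(P_3,C)$, $\mathcal{C}(P_5,C)$, and $\mathcal{C}(2P_3,C)$ are invariant under reversing any single edge of $C$, reducing the problem to checking them on the clockwise-oriented cycle $C_\ell$, where every valid injective embedding has sign $+1$.

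For the invariance step, note that if $C'$ is obtained from $C$ by reversing a single edge $e$, then the signs of embeddings of $G$ not using $e$ are unchanged, while the signs of embeddings using $e$ all flip; hence $\mathcal{C}(G,C)-\mathcal{C}(G,C')=2S_e$, where $S_e$ is the signed count of injective embeddings through $e$. For each $G\in\{P_3,P_5,2P_3\}$, the automorphisms of $G$ (flipping endpoints of $P_3$ or $P_5$; additionally swapping the two copies in $2P_3$) act on the embeddings through $e$ in sign-preserving orbits of size $|\mathrm{Aut}(G)|$, so $S_e$ is divisible by $|\mathrm{Aut}(G)|$. Combined with the normalization in which $\mathcal{C}(P_3,P_\ell)=\ell-2$ and $\mathcal{C}(2P_3,P_\ell)=\binom{\ell-4}{2}$, this makes $\mathcal{C}(G,C)\pmod 2$ an edge-reversal invariant, so $\mathcal{C}(G,C)\equiv\mathcal{C}(G,C_\ell)\pmod 2$.

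The computation on $C_\ell$ is direct since all signs are $+1$: $\mathcal{C}(P_3,C_\ell)=\ell$ (one wedge per center vertex), $\mathcal{C}(P_5,C_\ell)=\ell$ (one 4-edge consecutive subpath per starting edge), and $\mathcal{C}(2P_3,C_\ell)=\binom{\ell}{2}-2\ell=\ell(\ell-5)/2$ for $\ell\ge 6$ (unordered pairs of wedge centers at cyclic distance at least $3$).

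To finish: for odd $\ell$, $\mathcal{C}(P_3,C)\equiv\ell\equiv 1\pmod 2$ is nonzero, giving (1); for $\ell=4k+2$, $\mathcal{C}(P_5,C)\equiv\ell\equiv 0\pmod 2$, while $\mathcal{C}(2P_3,C)\equiv(2k+1)(4k-3)\equiv 1\pmod 2$ as a product of two odd integers, so $\mathcal{C}(P_5,C)+\mathcal{C}(2P_3,C)\equiv 1\pmod 2$ is nonzero, giving (2). I expect the invariance step to be the main technical subtlety: one must carefully verify that each embedding of $G$ using $e$ lies in an $|\mathrm{Aut}(G)|$-orbit of equal-sign embeddings, so that the power of $2$ dividing $S_e$ matches the normalization, since with the raw injective count the $\pmod 2$ invariance would be trivial and useless.
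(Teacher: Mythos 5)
Your proof is correct and mirrors the strategy the paper uses for the path analogue (Lemma~\ref{lem:P5notequal2P3}), which the paper does not spell out for the cycle version: reduce modulo $2$ to the uniformly oriented cycle $C_\ell$ by single-edge reversals, then count directly. Your base-case counts $\mathcal{C}(P_3,C_\ell)=\ell$, $\mathcal{C}(P_5,C_\ell)=\ell$, and $\mathcal{C}(2P_3,C_\ell)=\ell(\ell-5)/2$ are right, and the parity bookkeeping for $\ell$ odd and $\ell\equiv 2\pmod 4$ goes through.

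One small remark on the invariance step: the $|\mathrm{Aut}(G)|$ digression is unnecessary once you note that $\mathcal{C}(G,D)$, as it is actually used in Lemma~\ref{lem:decomposition}, is a signed count over \emph{subgraphs} $D'\subseteq D$ (equivalently, over unlabeled copies), not over labeled injections. With that interpretation, reversing a single edge $e$ gives $\mathcal{C}(G,C)-\mathcal{C}(G,C')=2S_e$ with $S_e\in\ZZ$ trivially, and the mod-$2$ invariance is immediate; there is no normalization to worry about. The apparent subtlety arises only because the formal definition of $\mathcal{C}$ in the text counts injections and thus overcounts by $|\mathrm{Aut}(G)|$; the stated values $\mathcal{C}(P_3,P_\ell)=\ell-2$ etc.\ make clear the subgraph interpretation is the intended one. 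Your workaround via orbit sizes is valid (the relevant flips send the reversal count $r$ to $e(G)-r$, preserving parity), but it is more work than needed.
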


Now by Theorems \ref{thm:wedges_cycle}, \ref{thm:countingP5and2P3_cycle}, \ref{thm:2P3_cycle} and \Cref{lem:P5notequal2P3_cycle}, we have \Cref{alg:LTS_LTAS_cycle} which characterizes the locally Sidorenko property of an oriented cycle $C$ with $|V(C)| \not\equiv 0 \pmod 4$.

\begin{algorithm}
\caption{Classify an oriented cycle \(C\) with $|V(C)| \not\equiv 0 \pmod 4$}
\label{alg:LTS_LTAS_cycle}
\begin{algorithmic}[1]
\Require An oriented cycle \(C\) obtained from \(C_\ell\) by reversing \(t\) edges, where \(\ell \not\equiv 0 \pmod 4\).
\Ensure One of \(\texttt{LTS}\), \(\texttt{LTAS}\), or \(\texttt{Neither}\).

\State Compute \(\mathcal{C}(P_3,C)\).

\If{\(\mathcal{C}(P_3,C)\neq 0\)} 
    \Comment{Apply Theorem \ref{thm:wedges_cycle}}
    \If{ \(\mathcal{C}(P_3,C)<0\) 
         \textbf{and} (\(\ell\) odd \textbf{or} (\(\ell\equiv2\pmod4\) and \(t\) odd)) }
        \State \Return \(\texttt{LTS}\)
    \ElsIf{ \(\mathcal{C}(P_3,C)>0\) 
             \textbf{and} (\(\ell\) odd \textbf{or} (\(\ell\equiv2\pmod4\) and \(t\) even)) }
        \State \Return \(\texttt{LTAS}\)
    \Else
        \State \Return \(\texttt{Neither}\)
    \EndIf
\Else
    \Comment{\(\mathcal{C}(P_3,C)=0\); apply Theorem \ref{thm:countingP5and2P3_cycle}}
    \State Compute \(\mathcal{C}(P_5,C)\), \(\mathcal{C}(2P_3,C)\), and minimum $k$ such that $\C(P_{2k+1},C)\not=0$.
    \State Verify that $\mathcal{C}(P_5, D) \neq -\mathcal{C}(2P_3, D)$. \Comment{Guaranteed by Lemma~\ref{lem:P5notequal2P3_cycle}}
    \If{
        \(
        (\mathcal{C}(P_5,C)>0 \land \mathcal{C}(P_5,C)> -\mathcal{C}(2P_3,C))
        \lor
        (\mathcal{C}(P_5,C)=0 \land \mathcal{C}(2P_3,C)>0\land(k~\text{does not exist}\lor (-1)^k\C(P_{2k+1},C)>0))
        \)
        \textbf{and}
        (\(\ell\) odd \textbf{or} (\(\ell\equiv2\pmod4\) and \(t\) odd))
       }
        \State \Return \(\texttt{LTS}\)
    \ElsIf{
        \(
        (\mathcal{C}(P_5,C)<0 \land \mathcal{C}(P_5,C)< -\mathcal{C}(2P_3,C))
        \lor
        (\mathcal{C}(P_5,C)=0 \land \mathcal{C}(2P_3,C)<0\land(k~\text{does not exist}\lor (-1)^k\C(P_{2k+1},C)<0))
        \)
        \textbf{and}
        (\(\ell\) odd \textbf{or} (\(\ell\equiv2\pmod4\) and \(t\) even))
       }
        \State \Return \(\texttt{LTAS}\)
    \Else
        \State \Return \(\texttt{Neither}\)
    \EndIf
\EndIf

\end{algorithmic}
\end{algorithm}

For any $k\ge 2$, the $k$-subdivision of a directed graph $D$, denoted by $D^{(k)}$, is a directed graph obtained by replacing each directed edge in $D$ by a directed path with $k$ edges oriented all in the original direction. \Cref{thm:cycles NOT TS/TAS} has the following interesting corollary.

\begin{corollary}\label{cor:subdvs cyc not TAS}
Let $k\ge 2$, $\ell\ge 1$, and let $\hat{C}_{2\ell}$ be the oriented cycle of length $2\ell$ with alternating orientation. Then $\hat{C}_{2\ell}^{(k)}$ is not LTAS.
\end{corollary}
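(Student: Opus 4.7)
The plan is to realize $\hat{C}_{2\ell}^{(k)}$ as an oriented cycle of length $2\ell k$ obtained from the all-clockwise cycle $C_{2\ell k}$ by reversing a specific set of $\ell k$ edges, and then to invoke \Cref{thm:cycles NOT TS/TAS}(i) directly.

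First I would carry out the combinatorial identification. Fix a cyclic orientation of $\hat{C}_{2\ell}$ and call an edge aligned if it points in that direction and reversed otherwise; the alternating condition means $\ell$ of the edges are aligned and $\ell$ are reversed, appearing in alternation around the cycle. Subdividing each edge into a directed path of $k$ edges in the original direction produces a cycle of length $2\ell k$ whose edges, read in cyclic order, appear in $2\ell$ alternating blocks of $k$ edges each: $\ell$ blocks aligned with the chosen clockwise traversal and $\ell$ blocks against it. Viewing the result as the clockwise cycle $C_{2\ell k}$ with a reversal pattern, exactly $t = \ell k$ edges are reversed.

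Next I would apply \Cref{thm:cycles NOT TS/TAS}(i) with cycle length $n \coloneqq 2\ell k$ and reversal count $t \coloneqq \ell k$; the hypothesis $n \ge 4$ follows from $\ell \ge 1$ and $k \ge 2$, and $n$ is automatically even. Both sub-cases of (i) are covered by a single parity observation: if $\ell k$ is even, then $n = 2\ell k \equiv 0 \pmod 4$ and $t$ is even, landing in the first sub-case of (i); if $\ell k$ is odd, then $n \equiv 2 \pmod 4$ and $t$ is odd, landing in the second sub-case. In either regime (i) gives that $\hat{C}_{2\ell}^{(k)}$ is not LTAS.

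The main obstacle is essentially nonexistent: the heavy analytic work was done in the proof of \Cref{thm:cycles NOT TS/TAS}, where the balanced step kernel associated to the $3 \times 3$ circulant-like matrix is used to push $t_C(W_\varepsilon)$ strictly above $2^{-n}$. The only content of the corollary is the recognition that the alternating block structure of a $k$-subdivided alternating cycle always aligns with the "not LTAS" side of the parity dichotomy in part (i) of the theorem, uniformly in $\ell$ and $k$.
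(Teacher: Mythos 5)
Your proposal is correct and follows essentially the same route as the paper: identify $\hat{C}_{2\ell}^{(k)}$ as $C_{2\ell k}$ with $t = \ell k$ edges reversed, then check that the parity dichotomy always lands in case (i) of \Cref{thm:cycles NOT TS/TAS}. The paper phrases the case split on the parities of $\ell$ and $k$ separately while you split on the parity of the product $\ell k$; these are equivalent and the content is identical.
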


\begin{proof}
The length of $\hat{C}_{2\ell}^{(k)}$ is $2\ell k$ and the number of flips from $C_\ell$ is $\ell k$. If one of $\ell$ and $k$ is even, then $2\ell k$ is divisible by 4 and $\ell k$ is even; if both $\ell$ and $k$ are odd, then $2\ell k$ is not divisible by 4 and $\ell k$ is odd. In either case, by \Cref{thm:cycles NOT TS/TAS}, $\hat{C}_{2\ell}^{(k)}$ is not LTAS.
\end{proof}

By the same proof, \Cref{thm:wedges_cycle} part $(iii)$ implies the following result.
\begin{corollary}
Let $k\ge 3$, $\ell\ge 1$, and let $\hat{C}_{2\ell}$ be the oriented cycle of length $2\ell$ with alternating orientation. Then $\hat{C}_{2\ell}^{(k)}$ is neither LTS nor LTAS.
\end{corollary}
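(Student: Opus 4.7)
The plan is to apply \Cref{thm:wedges_cycle} part (iii) to $C = \hat{C}_{2\ell}^{(k)}$, in the same spirit as \Cref{cor:subdvs cyc not TAS} which used \Cref{thm:cycles NOT TS/TAS}. The key inputs are the value of $\mathcal{C}(P_3, C)$ and the parity of the cycle length and number of reversed edges.

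First I would compute $\mathcal{C}(P_3, C)$. The underlying graph of $C$ is a cycle on $2\ell k$ vertices partitioned into $2\ell$ \emph{blocks} of $k$ consecutive edges, with directions alternating block-to-block. Each embedding of $P_3$ into $C$ is indexed by its central vertex together with an ordering of its two neighbors. A short case analysis on the type of the central vertex shows that each of the $2\ell(k-1)$ interior-of-block vertices contributes $+2$ (the two orderings reverse $0$ and $2$ edges respectively), while each of the $2\ell$ junction vertices (sources and sinks of $C$) contributes $-2$ (both orderings reverse exactly one edge). Summing gives $\mathcal{C}(P_3, C) = 4\ell(k-1) - 4\ell = 4\ell(k-2)$, which is strictly positive for $k \ge 3$.

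Next I would verify the parity conditions. The cycle has length $L = 2\ell k$ (always even) and $t = \ell k$ reversed edges relative to the clockwise orientation of $C_L$. If at least one of $\ell, k$ is even then $L \equiv 0 \pmod 4$ and $t$ is even; if both are odd then $L \equiv 2 \pmod 4$ and $t$ is odd. In either scenario, all three sub-conditions in \Cref{thm:wedges_cycle}(ii) fail: $L$ is never odd; $L \equiv 0 \pmod 4$ never pairs with $t$ odd; $L \equiv 2 \pmod 4$ never pairs with $t$ even. Since $\mathcal{C}(P_3, C) > 0$ also rules out (i), the cycle $C$ falls into case (iii) and is therefore neither LTS nor LTAS.

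The main subtlety is the sign bookkeeping in the computation of $\mathcal{C}(P_3, C)$: one must correctly handle both orderings of the central vertex's neighbors at each of the four local configurations (forward-interior, backward-interior, source junction, sink junction) in order to see the signs cleanly separate into $+2$ for interior vertices and $-2$ for junction vertices. Once this is done, the parity matching reduces to a quick check modulo $4$.
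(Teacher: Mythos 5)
Your proposal is correct and follows the same route as the paper: apply Theorem~\ref{thm:wedges_cycle}(iii) by reusing the parity bookkeeping from Corollary~\ref{cor:subdvs cyc not TAS}. The one thing the paper's one-line proof leaves implicit is the verification of the hypothesis $\mathcal{C}(P_3,C)\ne 0$; your explicit computation $\mathcal{C}(P_3,C)=4\ell(k-2)$ (via the $+2$ contribution from pass-through vertices and $-2$ from the $2\ell$ sources and sinks) supplies exactly this, and also makes clear why the argument breaks down at $k=2$, matching the open problem stated immediately after the corollary.
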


The only unsolved case is when $k=2$.
\begin{problem}
Let $\ell\ge 1$, and let $\hat{C}_{2\ell}$ be the oriented cycle of length $2\ell$ with alternating direction. Is $\hat{C}_{2\ell}^{(2)}$ LTS or not?
\end{problem}

\section{Typical paths}\label{sec:typical}

As noted in Theorem \ref{thm:wedges}, for a path $P$, when it is nonzero $\cC(P_3,P)$ determines the behavior of its homomorphism counts into tournaments that are very close to quasirandom. Lemma \ref{lem:decomposition} gives that $\cC(P_3,P)$ is determined by summing the signs of all its copies of $P_3$, where $\rightarrow \rightarrow, \leftarrow \leftarrow$ are positive and $\rightarrow \leftarrow, \leftarrow \rightarrow$ are negative.

Considering now a randomly oriented path $P_k$ on $k$ vertices, where each edge is oriented right or left independently with equal probability, $\cC(P_3,P_k)$ is then distributed as a random $\pm 1$ walk taking $k-2$ steps. Thus, while we exhibited paths which are neither locally TS or locally TAS, we can say there are few of them.
\begin{proof}[Proof of \Cref{prop:localwalk}]
    By Theorem \ref{thm:wedges}, for an oriented path $P$ to be neither locally TS nor TAS it is necessary that $\mathcal{C}(P_3,P)=0$. Since this number is a random walk if $P$ has random orientation, the probability that $\mathcal{C}(P_3,P)=0$ can be bounded by the probability a random walk starting at $0$ taking $n$ steps ends back at $0$. This probability is $0$ if $n$ is odd and ${n \choose n/2}2^{-n}$ if $n$ is even. The $\frac 1 2$ statements then follow by symmetry of the random walk (and hence the sign of $\mathcal{C}(P_3,P)$ when it is nonzero).
\end{proof}

It is natural to ask if a similar phenomenon to Proposition \ref{prop:localwalk} holds for being globally TS/TAS and not just locally. Theorem \ref{thm:rarity}, which we now prove, shows that the TS part of this statement is false globally. However, as stated in Conjecture \ref{conj:ubiquity}, it still may hold for TAS.

\begin{proof}[Proof of Theorem \ref{thm:rarity}]

We show the result by exhibiting an explicit weighted tournament into which a randomly oriented path has low weighted homomorphism count (compared to quasirandom) with high probability. Our construction is very simple, with a more nuanced analysis. Consider the two-vertex graph $K$ consisting of a directed edge $(a,b)$ of weight $w(a,b) = 1$ and two self loops $(a,a),(b,b)$ of weight $w(a,a) = w(b,b) = 1/2$ on the endpoints. We think of our random path $P_n$ as arriving edge by edge and track the weighted homomorphism count of the currently revealed path into $K$. Explicitly, let $P_n$ be a randomly oriented path on $n$ edges with vertex set $0,1,\dots,n$ and for $i = 0,1,\dots n$ let $P_i$ be the subpath induced by the vertices $0,\dots,i$. Define $f_i$ to be the total weight of homomorphisms $\phi: P_i \hookrightarrow K$ where $\phi(i)=b$ if $(i-1,i) \in P_n$ and $\phi(i) = a$ if $(i,i-1) \in P_n$, recalling that the weight of such $\phi$ is $\prod_{k=1}^i w(\phi(k-1),\phi(k))$. Roughly speaking, $f_i$ counts those homomorphisms where the last vertex is ``consistent'' with the orientation of the last edge. Let $g_i$ be the total weight of the remaining ``inconsistent'' homomorphisms $P_i \hookrightarrow K$. Then by definition the total weighted homomorphism count $P_i \hookrightarrow K$ is $f_i + g_i$ and our goal is to understand the random variable $f_n + g_n$. 

Say a vertex in a path is balanced if it has one edge in and one edge out, and imbalanced otherwise. Let $I_{i}, i \in [n-1]$ be the independent indicator of the event that vertex $i$ is balanced in $P_n$, and let $I_0 = 1$. From the above description it can be seen that $\set{(f_i,g_i)}_{i=1}^n$ is a Markov process with the following evolution:
\begin{align*}
    &f_0 = g_0 = 1\\
    &f_i = I_{i-1}\bigp{\frac{f_{i-1}}{2} + g_{i-1}} + (1-I_{i-1})\bigp{\frac{g_{i-1}}{2} + f_{i-1}}\\
    &g_{i} = I_{i-1}\frac{g_{i-1}}{2} + (1-I_{i-1})\frac{f_{i-1}}{2}.
\end{align*}
$\set{f_i+g_i}_{i=1}^n$ is in fact a martingale, but we will not use this. The quasirandom benchmark we compare $f_n + g_n$ to is $\frac{2^{n+1}}{2^{n}} = 2$, the homomorphism count of $P_n$ into the quasirandom tournament. Thus our goal is to show that with high probability $f_n + g_n < 2$. We will in fact show that with high probability $f_n + g_n < 2e^{-n/1000}$, giving that all but a vanishing proportion of oriented paths are in a sense exponentially far from being tournament Sidorenko.

We will do this by borrowing from the theory of stochastic linear recurrences. Note that
\begin{align*}
f_n + g_n &= I_{n-1} \left( \frac{1}{2} f_{n-1} + \frac{3}{2} g_{n-1} \right) + (1 - I_{n-1}) \left( \frac{1}{2} g_{n-1} + \frac{3}{2} f_{n-1} \right) \\
&= \frac{1}{2} (f_{n-1} + g_{n-1})(I_{n-1} + 1 - I_{n-1}) + I_{n-1} g_{n-1} + (1 - I_{n-1}) f_{n-1} \\
&= (f_{n-1} + g_{n-1}) + \left( I_{n-1} - \frac{1}{2} \right) g_{n-1} + \left( \frac{1}{2} - I_{n-1} \right) f_{n-1}\\
&= (f_{n-1} + g_{n-1}) + \left( I_{n-1} - \frac{1}{2} \right) (g_{n-1}-f_{n-1}).
\end{align*}
On the other hand, subtracting the recurrence relations for \(f\) and \(g\), we find
\[
g_n - f_n = I_{n-1} \left( -\frac{1}{2} g_{n-1} - \frac{1}{2} f_{n-1} \right) + (1 - I_{n-1}) \left( -\frac{1}{2} f_{n-1} - \frac{1}{2} g_{n-1} \right) = -\frac 1 2 (f_{n-1}+g_{n-1}).
\]
Thus
\begin{align*}
    f_n + g_n &= (f_{n-1} + g_{n-1}) + \left( I_{n-1} - \frac{1}{2} \right) (g_{n-1}-f_{n-1}) \\
    &= (f_{n-1} + g_{n-1}) - \frac 1 2(I_{n-1} - 1/2)f_{n-2} + g_{n-2}) \\
    &= (f_{n-1} + g_{n-1}) \pm \frac{1}{4} (f_{n-2} + g_{n-2}).
\end{align*}
where $\pm$ is used to denote a uniformly random sign taken independently across $n \in \NN$.

Define $x_n = \frac 1 2 (f_n + g_n)$ so that
$$
x_n = x_{n-1} \pm \frac{1}{8} x_{n-2}, \quad x_1 = x_0 = 1
$$
For the stronger ``exponentially far from TS'' statement it suffices to show that with high probability $x_n < e^{-n/1000}$. Note that by definition $f_n + g_n$ is always positive, so $x_n$ is as well. A result of Furstenberg \cite{FurstLLN}, the law of large numbers for random matrix products, gives that the almost sure limit
$$
\lambda \coloneqq \lim_{n \to \infty} \frac{\ln x_n}{n}
$$
exists. In this context $\lambda$ is called the Lyapunov exponent. Of course this implies convergence in probability, and so
$$
\pr{x_n \ge e^{-n/1000}} = \pr{\frac{\ln x_n}{n} \ge -10^{-3}} \le \pr{\abs{\frac{\ln x_n}{n} - \lambda} \ge -\bigp{\lambda + 10^{-3}}}
$$
vanishes as long as $\lambda < - 10^{-3}$. To show this we use the method of \cite{fibseqsire} to bound the Lyapunov exponent, which in general is not an easy task. There is plenty of literature numerically exploring the dependence of this exponent on the recurrence parameters \cites{visfib,embreefib}, and in particular for our parameters \cite{embreefib} estimates $\lambda \approx -0.0083$ but does not provide any rigorous bounds. First we transform the process by considering the ratio $r_n \coloneqq x_{n+1}/x_n$, so that
$$
r_n = 1 \pm \frac{1/8}{r_{n-1}}, \quad r_0 = 1.
$$
Note that by definition $r_n$ is always positive with probability one (and thus in fact always greater than $1/8$). This process converges to an invariant distribution $P$, which from the above dynamics satisfies
$$
P(r) = \int_\RR \frac 1 2 \bigp{\delta\bigp{r-1+\frac{1/8}{q}} + \delta\bigp{r-1-\frac{1/8}{q}}} dq.
$$
By definition and the law of large numbers we then have that almost surely
$$
\lambda = \lim_{n \to \infty} \frac{1}{n} \sum_{j=1}^n \ln r_j = \EE_P \ln r = \int_\RR \ln(r) P(r) dr.
$$
Plugging in the invariance formula for $P(r)$ and expanding the logarithms gives
$$
\lambda = \frac 1 2 \int_\RR \bigp{\ln \bigp{1+\frac{1/8}{r}} + \ln \bigp{1-\frac{1/8}{r}}} P(r) dr = - \sum_{k=1}^\infty \frac{\bigp{1/8}^{2k}}{2k} \int_\RR r^{-2k} P(r) dr.
$$
Note now that, from the invariance property of $P$ and the fact that it is supported in $\RR_+$, the smallest $r$ such that $P(r) > 0$ satisfies $r_* = 1-\frac{1/8}{r_*}$ and the largest $r$ in the support satisfies $r^* = 1+ \frac{1/8}{r_*}$. Solving the system of equations gives that $P$ is supported within the interval $\bigb{\frac{1+\sqrt{2}}{2\sqrt{2}}, \frac{3\sqrt{2}-1}{2\sqrt{2}}}$. Thus throwing away all negative terms except the first gives
$$
\lambda < -\frac{(1/8)^2}{2\bigp{\frac{3}{\sqrt{2}}}^2} \int_\RR P(r) dr = -\frac{1}{576} < -10^{-3}
$$
as desired.
\end{proof}

\section{Toward a conjecture on trees}\label{sec:treecon}

This section proves Theorems \ref{thm:isopairtas} and \ref{thm:caterpillar} as well as discusses some related results. For this we will need a stronger notion of TAS, first defined in \cite{fox2024variationssidorenkosconjecturetournaments}, which we repeat here.
\begin{definition}[Strongly TAS]\label{def:strongtas}
    Given a digraph \(D\) and an independent set \(I\subset V(D)\), we say that the pair \((I,D)\) is \emph{strongly tournament anti-Sidorenko} (or $D$ is strongly TAS at $I$) if for any \(n\)-vertex tournament \(T\) and any embedding
\[
\phi\colon I\hookrightarrow V(T),
\]
the number of labeled copies \(\varphi\) of \(D\) in \(T\) with \(\varphi|_{I}=\phi\) is at most
\[
2^{-e(D)}\,n^{\,v(D)-\lvert I\rvert}.
\] 
\end{definition}
Note that if $I' \subseteq I$ then $(I',D)$ is strongly TAS if $(I,D)$ is strongly TAS. Since $(\emptyset,D)$ strongly TAS is just the same as $D$ being TAS, strongly TAS implies TAS as one would hope. Note that the analagous notion of strongly tournament Sidorenko, with ``at most'' replaced by ``at least'', is not interesting. In particular, if $(I,D)$ is strongly TS in this sense then every vertex in $I$ is isolated in $D$. Indeed if $(I,D)$ is any pair for which there is a vertex $v \in I$ that is not isolated in D, then letting $T$ be a transitive tournament and embedding $v$ as the source or sink we find an embedding $I \hookrightarrow T$ which does not extend to any copy of $D$.

We are now ready to give the key proposition motivating Definition \ref{def:isopair} of an isomorphic pair. Let $N(D,T)$ be the number of labeled copies of the digraph $D$ in the tournament $T$, and $N(D,T|v \hookrightarrow t)$ for $v \in D,t \in T$ be the number of labeled copies where $v$ is mapped to $t$.
\begin{proposition}[Isomorphic pair]\label{prop:amgmstrong}
    Let $H$ be any digraph and $w \in V(H)$ an arbitrarily designated vertex. Then letting $H'$ be a copy of $H$ and $w' \in V(H')$ any isomorphic image of $w$, the digraph $D$ defined by $V(D) = V(H) \cup V(H') \cup \set{v}$ and $E(D) = E(H) \cup E(H') \cup \set{(w,v),(v,w')}$ is such that for every tournament $T$ and vertex $t \in T$
    $$
    N(D,T|v \hookrightarrow t) \le N(H,T)^2/4
    $$
    Consequently if $H$ is TAS then $(\set{v},D)$ is strongly TAS.
\end{proposition}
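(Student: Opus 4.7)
The plan is to fix the image of $v$ and factor $N(D,T \mid v \hookrightarrow t)$ as a product of two counts of $H$-embeddings, one constrained to send $w$ to an in-neighbor of $t$ and the other to an out-neighbor of $t$. Applying AM--GM then yields the required bound. More concretely, with $v$ mapped to $t$, the remaining vertices of $D$ split into $V(H)$ and $V(H')$, which share no edges with each other, so the number of completions factors as $N_{\mathrm{in}}(t) \cdot N_{\mathrm{out}}(t)$, where $N_{\mathrm{in}}(t)$ counts embeddings of $H$ into $T$ with $w$ sent to an in-neighbor of $t$ (forced by the edge $(w,v) \in E(D)$), and $N_{\mathrm{out}}(t)$ counts embeddings of $H'$ into $T$ with $w'$ sent to an out-neighbor of $t$ (forced by the edge $(v,w') \in E(D)$). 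Here I use the fixed isomorphism $H \to H'$ sending $w \mapsto w'$ to rewrite $N_{\mathrm{out}}(t)$ as a count of embeddings of $H$ (not $H'$) with $w$ sent to an out-neighbor of $t$.

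Next I observe that the sets of embeddings counted by $N_{\mathrm{in}}(t)$ and $N_{\mathrm{out}}(t)$ are disjoint, since in a tournament each vertex $u \neq t$ is either an in-neighbor or out-neighbor of $t$ but not both (and $u=t$ is forbidden, as the edges $(w,v),(v,w')$ would require a self-loop at $t$). Therefore
\[
N_{\mathrm{in}}(t) + N_{\mathrm{out}}(t) \;\le\; N(H,T).
\]
Applying the arithmetic-geometric mean inequality $xy \le (x+y)^2/4$ then gives
\[
N(D,T \mid v \hookrightarrow t) \;=\; N_{\mathrm{in}}(t)\,N_{\mathrm{out}}(t) \;\le\; \frac{(N_{\mathrm{in}}(t)+N_{\mathrm{out}}(t))^2}{4} \;\le\; \frac{N(H,T)^2}{4},
\]
which is the main inequality of the proposition.

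For the strongly TAS conclusion, assume $H$ is TAS so that $N(H,T) \le 2^{-e(H)} n^{v(H)}$ for every $n$-vertex tournament $T$. The construction gives $e(D) = 2e(H)+2$ and $v(D) = 2v(H)+1$, hence $v(D) - |\{v\}| = 2v(H)$, and substituting into the displayed inequality yields
\[
N(D,T \mid v \hookrightarrow t) \;\le\; \frac{1}{4} \cdot 2^{-2e(H)} n^{2v(H)} \;=\; 2^{-e(D)} n^{v(D) - 1},
\]
matching the required bound in Definition~\ref{def:strongtas} with $I = \{v\}$.

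There is no real obstacle; the argument is essentially an AM--GM on a factored count. The only point that deserves care is verifying the combinatorial bookkeeping: that the isomorphism from $H'$ to $H$ sending $w' \mapsto w$ lets us treat both factors as counts against the same template $H$, and that the partition of $V(T) \setminus \{t\}$ into in- and out-neighbors of $t$ is exactly what makes $N_{\mathrm{in}}(t) + N_{\mathrm{out}}(t) \le N(H,T)$ (rather than something weaker).
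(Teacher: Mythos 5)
Your proof takes essentially the same approach as the paper: factor $N(D,T \mid v \hookrightarrow t)$ into the product of $H$-embeddings with $w$ sent into $N^-(t)$ and $H'$-embeddings with $w'$ sent into $N^+(t)$, then apply AM--GM. You are in fact slightly more careful than the paper, whose final step asserts $N(H,T\mid w\hookrightarrow N^-(t)) + N(H',T\mid w'\hookrightarrow N^+(t)) = N(H,T)$; since an embedding may send $w$ to $t$ itself, the correct relation is the inequality $N_{\mathrm{in}}(t)+N_{\mathrm{out}}(t)\le N(H,T)$ exactly as you state, and the bound still follows.
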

\begin{proof}
By the AM-GM inequality
    \begin{align*}
        N(D,T|v \hookrightarrow t) &= N(H,T|w \hookrightarrow N^-(t))N(H',T|w' \hookrightarrow N^+(t)) \\
        &\le \bigp{\frac{N(H,T|w \hookrightarrow N^-(t)) + N(H',T|w' \hookrightarrow N^+(t))}{2}}^2\\
        &= N(H,T)^2/4
    \end{align*}
\end{proof}

Strongly TAS is primarily useful because two strongly TAS digraphs can be glued to form another.
\begin{lemma}[Strongly TAS gluing \cite{fox2024variationssidorenkosconjecturetournaments}*{Lemma 3.10}]\label{lem:gluing}
    If both $(I_1,D_1)$ and $(I_2,D_2)$ are strongly tournament anti-Sidorenko pairs with $v(D_2) \geq |I_1|$, then the digraph $D$ formed by identifying $|I_1|$ vertices of $D_2$ with $I_1 \subset V(D_1)$ satisfies that $(I_2,D)$ is strongly tournament anti-Sidorenko.
\end{lemma}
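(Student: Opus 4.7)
The plan is to fix an arbitrary $n$-vertex tournament $T$ and an embedding $\phi_2\colon I_2 \hookrightarrow V(T)$, then decompose each labeled copy of $D$ extending $\phi_2$ by where the identified set $J \subset V(D)$ (the image of $I_1 \subset V(D_1)$ under the gluing) lands in $T$. Any labeled copy $\varphi\colon V(D) \hookrightarrow V(T)$ extending $\phi_2$ restricts to a labeled copy of $D_1$ extending $\psi \coloneqq \varphi|_J$ on $I_1$ and a labeled copy of $D_2$ extending $\phi_2 \cup \psi$ on $I_2 \cup J$. These two restrictions determine $\varphi$ completely, and after relaxing the requirement that the two images be disjoint outside $J$ (which only loosens the count) we obtain
\[
\#\{\text{labeled copies of } D \text{ extending } \phi_2\} \;\le\; \sum_{\psi} N_1(\psi)\, N_2(\phi_2,\psi),
\]
where $\psi$ ranges over injections $I_1 \hookrightarrow V(T)$ compatible with $\phi_2$ on $J \cap I_2$, $N_1(\psi)$ denotes the number of labeled copies of $D_1$ extending $\psi$, and $N_2(\phi_2,\psi)$ denotes the number of labeled copies of $D_2$ extending $\phi_2 \cup \psi$.

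Next, strongly TAS of $(I_1, D_1)$ applied to each $\psi$ yields the uniform bound $N_1(\psi) \le 2^{-e(D_1)} n^{v(D_1) - |I_1|}$, which factors cleanly out of the sum. What remains is $\sum_\psi N_2(\phi_2, \psi)$, which is exactly the total number of labeled copies of $D_2$ extending $\phi_2$, since every labeled copy of $D_2$ extending $\phi_2$ has a uniquely determined restriction to $J$ (forced to agree with $\phi_2$ on $J\cap I_2$). Strongly TAS of $(I_2, D_2)$ then caps this total by $2^{-e(D_2)} n^{v(D_2) - |I_2|}$.

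Combining the two estimates and using $e(D) = e(D_1) + e(D_2)$ together with $v(D) = v(D_1) + v(D_2) - |I_1|$ yields exactly $2^{-e(D)} n^{v(D) - |I_2|}$, the desired strongly TAS bound for $(I_2, D)$. The only point requiring care is the bookkeeping around the identified set $J$: one must confirm that relaxing injectivity of the combined map truly only loosens the inequality and that the possible overlap $J \cap I_2$ is absorbed by restricting the inner sum to $\psi$ agreeing with $\phi_2$ there. I do not anticipate a deeper obstacle; the argument is essentially a product-over-fiber estimate made possible by the uniformity of the strongly TAS bound in the embedding of the independent set.
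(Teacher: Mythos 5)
Your proof is correct. Note that this lemma is stated as a citation of Lemma~3.10 of the referenced paper and is not proved within the present paper, so there is no internal proof to compare against; but your argument---decompose copies of $D$ extending $\phi_2$ by the image $\psi$ of the identified set $J$, relax the disjointness constraint to factor the count into $N_1(\psi)\cdot N_2(\phi_2,\psi)$, pull out the uniform strongly-TAS bound $N_1(\psi)\le 2^{-e(D_1)}n^{v(D_1)-|I_1|}$, then recognize $\sum_\psi N_2(\phi_2,\psi)$ as the total count of copies of $D_2$ extending $\phi_2$ and apply the strongly-TAS bound for $(I_2,D_2)$---is the standard fiber-product argument and almost certainly matches the cited proof. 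The two points you flag as needing care (relaxing injectivity only loosens the inequality, and $\phi_2$ forces agreement on $J\cap I_2$) are both handled correctly: any $\psi$ incompatible with $\phi_2$ contributes $N_2(\phi_2,\psi)=0$, and pairs of embeddings of $D_1$ and $D_2$ agreeing on $J$ but overlapping elsewhere do not give injective copies of $D$, so the product overcounts. The arithmetic $e(D)=e(D_1)+e(D_2)$ uses that $I_1$ is independent in $D_1$ (so no edge is double-counted inside $J$), which is guaranteed by the definition of a strongly TAS pair.
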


Combined with the above proposition this gives Theorem \ref{thm:isopairtas}, which we now recall.

\isopair*
\begin{proof}
    Suppose one is given an undirected graph $G$ containing an isomorphic pair $H_1,H_2$. Then by Proposition \ref{prop:amgmstrong} if there exists a TAS orientation of $H_1$ then orienting $H_2$ the same way and $\set{v,w}, \set{v,\phi(w)}$ as $(w,v),(v,\phi(w))$ gives a strongly TAS orientation of $H_1 \cup H_2 \cup \set{v}$ at $v$ (note that $v$ is not adjacent to anything else in $H_1,H_2$). Then if $G[V(G) \setminus (H_1 \cup H_2)]$ has a TAS orientation, by Lemma \ref{lem:gluing} gluing $H_1 \cup H_2 \cup \set{v}$ onto $G[V(G) \setminus (H_1 \cup H_2)]$ at $v$ gives a TAS orientation of all of $G$.
\end{proof}

The class of trees with no isomorphic pair is simpler, for instance no two leaves can share a parent as the two vertices of degree 1 in $K_{1,2}$ form an isomorphic pair adjacent to the vertex of degree 2. A small example of a tree with no isomorphic pair is the $1-2-3$ tree, depicted below with a TAS orientation (following from Theorem \ref{thm:caterpillar}).
\begin{center}
\begin{tikzpicture}[scale = 0.8]
    % Nodes
    \node (A) at (0,0) {A};
    \node (B) at (1,1) {B};
    \node (C) at (2,2) {C};
    \node (G) at (3,3) {G};
    \node (D) at (3,1) {D};
    \node (E) at (4,0) {E};
    \node (F) at (5,-1) {F};
    
    % Edges
    \draw[->] (A) -- (B);
    \draw[->] (B) -- (C);
    \draw[->] (C) -- (G);
    \draw[->] (D) -- (C);
    \draw[->] (E) -- (D);
    \draw[->] (F) -- (E);
\end{tikzpicture}
\end{center}

\subsection{Entropy method}
% The following orientation suffices
Our only proof that the above orientation of $1-2-3$ is TAS uses entropy, and is inspired by the proof of the TAS property of directed paths by Dong and Ślusarczyk (appendix of \cite{SSZ20}). The entropy method more generally gives Theorem \ref{thm:caterpillar}, which we now prove.

Given a discrete random variable $X$ with support $\Omega$, its entropy is 
$$
H(X)=-\sum_{x\in \Omega}\pr{X=x}\log(\pr{X=x}).
$$

Given another discrete random variable $Y$ with support $\Omega'$, the conditional entropy of $Y$ given $X$ is 
\begin{align*}
H(Y|X)&=\sum_{x\in\Omega}\pr{X=x}H(Y|X=x)\\
&=\sum_{x\in\Omega}\pr{X=x}\bigp{-\sum_{y\in \Omega'}\pr{Y=y|X=x}\log(\pr{Y=y|X=x}}.
\end{align*}

We make use of the following properties of entropy.

The uniform bound:
\begin{equation}
H(X)\le \log (|\Omega|).
\end{equation}

The chain rule:
\begin{equation}
H(X,Y)=H(X)+H(Y|X).
\end{equation}

See \cite{Z23}*{Section 5.5} for proofs of these properties as well as usages of entropy in the undirected setting of Sidorenko's conjecture. We can now construct a TAS orientation for every caterpillar.

\begin{proof}[Proof of Theorem \ref{thm:caterpillar}]
Let $X_0-X_1-\dots-X_{k+1}$ be the longest path in the caterpillar. For each $1\le i\le k$, let $s_i=d(X_i)-2$, where $d(X_i)$ is the number of neighbors of $X_i$. If $s_i>0$, then we denote the additional neighbors of $X_i$ as $Y_{i,1},\dots, Y_{i,s_i}$.

We oriented the caterpillar inductively as follows. We first let $X_0\rightarrow X_1$. For each $1\le i\le k$, if $X_{i-1}\rightarrow X_i$ ($X_{i-1}\leftarrow X_i$), 
then we let $X_i\rightarrow Y_{i,j}$ ($X_i\leftarrow Y_{i,j}$) if $j$ is odd, and $X_i\leftarrow Y_{i,j}$ ($X_i\rightarrow Y_{i,j}$) if $j$ is even. Further, if $d(X_i)$ is even, then we let $X_i\rightarrow X_{i+1}$ ($X_i\leftarrow X_{i+1}$); if $d(X_i)$ is odd, then we let $X_i\leftarrow X_{i+1}$($X_i\rightarrow X_{i+1}$). See the figure below for an example.
\begin{center}
\begin{tikzpicture}[scale = 0.8]
    % Nodes X
    \node (X0) at (0,0) {$X_0$};
    \node (X1) at (2,0) {$X_1$};
    \node (X2) at (4,0) {$X_2$};
    \node (X3) at (6,0) {$X_3$};
    \node (X4) at (8,0) {$X_4$};
    \node (X5) at (10,0) {$X_5$};

    % Nodes Y
    \node (Y11) at (2,2) {$Y_{1,1}$};
    \node (Y21) at (3,-2) {$Y_{2,1}$};
    \node (Y22) at (4,-2) {$Y_{2,2}$};
    \node (Y23) at (5,-2) {$Y_{2,3}$};
    \node (Y31) at (5,2) {$Y_{3,1}$};
    \node (Y32) at (7,2) {$Y_{3,2}$};
    
    % Edges
    \draw[->] (X0) -- (X1);
    \draw[<-] (X1) -- (X2);
    \draw[->] (X2) -- (X3);
    \draw[->] (X3) -- (X4);
    \draw[->] (X4) -- (X5);
    \draw[->] (X1) -- (Y11);
    \draw[<-] (X2) -- (Y21);
    \draw[->] (X2) -- (Y22);
    \draw[<-] (X2) -- (Y23);
    \draw[->] (X3) -- (Y31);
    \draw[<-] (X3) -- (Y32);
\end{tikzpicture}
\end{center}

Consider a random homomorphism $f$ from the oriented caterpillar to the given tournament $T$ (with self-loops weighted 1/2) taken uniformly from all possible homomorphisms. For simplicity, we slightly abuse the notation and use $X_i$ and $Y_{i,j}$ to represent the random variables $f(X_i)$ and $f(Y_{i,j})$.

By the chain rule, we have
\begin{align*}
&H(X_0,\dots,X_{k+1},Y_{1,1},\dots,Y_{1,s_1},\dots,Y_{k,1},\dots,Y_{k,s_k})\\
=&H(X_0,X_1)+\sum_{i=1}^k\left(\sum_{j=1}^{s_i}H(Y_{i,j}|X_i)+H(X_{i+1}|X_i)\right),
\end{align*}
and similarly,
\begin{align*}
&H(X_0,\dots,X_{k+1},Y_{1,1},\dots,Y_{1,s_1},\dots,Y_{k,1},\dots,Y_{k,s_k})\\
=&H(X_k,X_{k+1})+\sum_{i=1}^k\left(\sum_{j=1}^{s_{k+1-i}}H(Y_{k+1-i,j}|X_{k+1-i})+H(X_{k-i}|X_{k+1-i})\right).
\end{align*}
Hence, taking an average, 
\begin{align*}
&H(X_0,\dots,X_{k+1},Y_{1,1},\dots,Y_{1,s_1},\dots,Y_{k,1},\dots,Y_{k,s_k})\\
=&\frac{H(X_1,X_2)+H(X_k,X_{k+1})}{2}+\sum_{i=1}^k\left(\sum_{j=1}^{s_i}H(Y_{i,j}|X_i)+\frac{H(X_{i-1}|X_{i})+H(X_{i+1}|X_{i})}{2}\right).   
\end{align*}

Note that for any three vertices $A,B,C$ in the caterpillar, if $A\rightarrow B$ and $A\leftarrow C$, then by the uniform bound and the AM-GM inequality
\begin{align*}
H(B|A)+H(C|A)&=\sum_{x\in V(T)}\pr{A=x}\bigp{H(B|A=x)+H(C|A=x)}\\
&\le \sum_{x\in V(T)}\pr{A=x}\bigp{\log(d^+_T(x))+\log(d^-_T(x))}\\
&\le \sum_{x\in V(T)}\pr{A=x}\log\bigp{\frac{n^2}{4}}\\
&=\log\bigp{\frac{n^2}{4}}.
\end{align*}

Note that in the term $\sum_{j=1}^{s_i}H(Y_{i,j}|X_i)+\frac{H(X_{i-1}|X_{i})+H(X_{i+1}|X_{i})}{2}$ the number of in-neighbors of $X_i$ equals the number of out-neighbors. Thus
$$
\sum_{j=1}^{s_i}H(Y_{i,j}|X_i)+\frac{H(X_{i-1}|X_{i})+H(X_{i+1}|X_{i})}{2}\le \frac{s_i+1}{2}\log(\frac{n^2}{4}).
$$
Also note that by the uniform bound
$$
H(X_1,X_2)+H(X_k,X_{k+1})\le 2\log(\frac{n^2}{2}).
$$
Therefore, we have
\begin{align*}
&H(X_0,\dots,X_{k+1},Y_{1,1},\dots,Y_{1,s_1},\dots,Y_{k,1},\dots,Y_{k,s_k})\\
&\le \log(\frac{n^2}{2})+\sum_{i=1}^k\frac{s_i+1}{2}\log(\frac{n^2}{4})\\
&=\log\left(\frac{n^{\sum_{i=1}^k s_i+k+2}}{2^{\sum_{i=1}^k s_i+k+1}}\right).
\end{align*}
Thus, by the uniform bound, we know that the number of homomorphisms is at most $\frac{n^{\sum_{i=1}^k s_i+k+2}}{2^{\sum_{i=1}^k s_i+k+1}}$, which implies that the oriented caterpillar has the TAS property.
\end{proof}

% Since some caterpillars have no isomorphic pair, this reduces Conjecture \ref{conj:originaltree} further. In total, incorporating the reduction to trees with at least $2$ vertices of even degree proved in \cite{fox2024variationssidorenkosconjecturetournaments}, Conjecture \ref{conj:originaltree} is reduced to

% \begin{conjecture}[TAS trees, reduced]
%     For every undirected tree that is not a caterpillar with at least $2$ vertices of even degree and no isomorphic pair, there exists some orientation that has the tournament anti-Sidorenko property
% \end{conjecture}

% Unfortunately the above approach does not generalize to show this is TAS, or any other orientation.

\subsection{Sparse graphs with no TAS orientation}

Another approach towards Conjecture \ref{conj:originaltree} is to ask: for which undirected graphs $G$ is it true that $G$ has a TAS orientation? In \cite{fox2024variationssidorenkosconjecturetournaments} it is shown that if $G$ has $k$ vertices and more than $k\log k$ edges, then no orientation of $G$ is TAS. We may thus define an undirected graph to be \textit{non-TAS} if every orientation is non-TAS.

\begin{proposition}\label{prop:sparse} For large enough $k$ there exists an undirected graph $G$ with $k$ vertices and $(\frac{1}{2} + o(1)) k \log k$ edges that is non-TAS.
\end{proposition}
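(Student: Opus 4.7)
The plan is to deduce the proposition from the $k \log k$ bound of \cite{fox2024variationssidorenkosconjecturetournaments} via padding with isolated vertices. The key observation is that isolated vertices leave the TAS property unchanged, so any non-TAS graph on $k'$ vertices can be ``upgraded'' to a non-TAS graph on $k \ge k'$ vertices with the same number of edges.

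Indeed, let $H$ be any graph on $k'$ vertices, and let $G \coloneqq H \sqcup I$, where $I$ is a set of $k - k'$ isolated vertices. Orientations of $G$ are in bijection with orientations of $H$, since the vertices of $I$ carry no edges; let $D$ and $D'$ denote a corresponding pair. For any tournament $T$ on $n$ vertices, each isolated vertex of $D$ contributes a free factor of $n$, giving
\[
h_D(T) = n^{|I|}\, h_{D'}(T),\qquad
t_D(T) = \frac{h_D(T)}{n^{v(G)}} = \frac{h_{D'}(T)}{n^{v(H)}} = t_{D'}(T),
\]
while $e(D) = e(D')$. Hence the TAS inequality $t_D(T) \le t_{K_2}(T)^{e(D)}$ for all $T$ is equivalent to the same inequality for $D'$, so $G$ has a TAS orientation if and only if $H$ does.

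Next I would apply the result of \cite{fox2024variationssidorenkosconjecturetournaments} at $k' \coloneqq \lfloor k/2 \rfloor$: for large enough $k$, pick $H$ on $k'$ vertices with $\lceil k' \log k' \rceil + 1$ edges, which exceeds $k' \log k'$ and hence ensures that no orientation of $H$ is TAS. (Such an $H$ exists since $k' \log k' = o(\binom{k'}{2})$ for large $k'$.) The padded graph $G = H \sqcup I$ is then non-TAS on $k$ vertices with
\[
|E(G)| = \lceil k' \log k' \rceil + 1 = \tfrac{k}{2}\log\tfrac{k}{2} + O(1) = \tfrac{1}{2}k \log k - \tfrac{k}{2}\log 2 + O(1) = \bigl(\tfrac{1}{2}+o(1)\bigr)k\log k
\]
edges, as required.

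There is no serious obstacle here, since the argument is just the transfer of the TAS property under padding combined with a direct edge count. In fact, choosing $k' = k/c$ for any larger constant $c$ (or even $k' = k/\omega(1)$) would yield strictly sparser non-TAS examples; Proposition~\ref{prop:sparse} simply records the convenient $c = 2$ case, and a sharper statement would require genuinely new non-TAS constructions improving on the $k'\log k'$ threshold itself.
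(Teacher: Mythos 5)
Your proof is correct for the literal statement, but it takes a genuinely different route from the paper's. You pad a dense non-TAS graph $H$ on $k'=\lfloor k/2\rfloor$ vertices (guaranteed by the cited $k\log k$ bound of \cite{fox2024variationssidorenkosconjecturetournaments}) with $k-k'$ isolated vertices, using the correct observation that isolated vertices preserve both $t_D$ and $e(D)$ and hence TAS status. The paper instead gives a self-contained construction that never invokes that bound: $G$ is partitioned into $m=(1+o(1))\sqrt{k\log k}$ independent sets with exactly one edge between each pair, all degrees are $(1+o(1))\log k$, and every orientation of $G$ admits a homomorphism into some $m$-vertex tournament, forcing density at least $m^{-k} > 2^{-e(G)}$. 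What the paper's approach buys is a near-regular graph with no isolated vertices; what your approach reveals is that, once isolated vertices are allowed, the vertex count decouples entirely from the edge count. Indeed, pushing your own suggestion of $k'=k/\omega(1)$ -- or just padding a single fixed non-TAS graph with $k-O(1)$ isolated vertices -- already gives $O(1)$ edges on $k$ vertices, far below $(\tfrac12+o(1))k\log k$. So your closing remark slightly miscasts the proposition: it is not ``recording the convenient $c=2$ case'' of padding, but is achieving the $\tfrac12$ coefficient via a construction that is meaningful precisely because it rules out such degeneracies.
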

\begin{proof}
    Let $G$ be an arbitrary graph on $k$ vertices with the property that $G$ can be partitioned into $m = (1+o(1)) \sqrt{k \log k}$ independent sets where between each pair there is exactly one edge. Thus, $G$ has the correct number $\binom{m}{2} \sim (\frac{1}{2} + o(1)) k \log k$ of edges. Such $G$ can be constructed easily by putting down $m$ independent sets first and building an edge between each pair of parts arbitrarily, and with a tiny bit more work we may guarantee all degrees in $G$ are $(1+o(1)) \log k$.

    We claim that every orientation of $G$ is not TAS. Indeed, observe by construction that every orientation $\vec{G}$ of $G$ has a homomorphism to some tournament $T$ on $m$ vertices. In particular, the homomorphism density of $\vec{G}$ into this $T$ is at least $(1/m)^k > 2^{-(\frac{1}{2}+o(1)) k \log k} > 2^{-e(\vec{G})}$ for appropriate choice of lower-order terms.    
\end{proof}

\bibliographystyle{amsplain}
\bibliography{refs.bib}

% \pagebreak

\appendix

\section{Proof of sign flip Proposition \ref{prop:signflip}}\label{sec:signflipapp}
Recall from the proof of \Cref{lem:lambda} that for a skew symmetric matrix $B$ with eigenvalues $\lambda_1 i,\dots,\lambda_n i$ we defined $X_{2t} \coloneqq \abs{1^\top B^{2t} 1}$ and proved that there exists $c_1,\dots,c_n \in \RR$ with $\sum_{i=1}^n c_i^2 = 1$ such that
\begin{equation}\label{eq:olda1}
    X_{2t} = \sum_i c_i^2 \lambda_i^{2t}.
\end{equation}

\begin{lemma}\label{lemma:estimating X} For any $s\ge t\ge 0$,
\begin{itemize}
    \item[(i)] $X_{2t+1}=0$;
    \item [(ii)] $X_{4t+2}=-1^\top B^{4t+2}1$ and $X_{4t}=1^\top B^{4t}1$;
    \item [(iii)] $X_{2s}\le X_{2t}(\frac{n}{2})^{2(s-t)}$;
    \item [(iv)] $X_{2s}^2\le X_{2(s-t)}X_{2(s+t)}$.
\end{itemize}
\end{lemma}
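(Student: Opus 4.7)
The plan is to extract all four items from the spectral identity \eqref{eq:olda1}, namely $X_{2t} = \sum_i c_i^2 \lambda_i^{2t}$ with $c_i \in \RR$, combined with skew-symmetry of $B$ and the standard bound $\lambda_{\max}(B) \le n/2$.

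First I would dispatch (i) using only skew-symmetry: since $(B^{2t+1})^\top = (-B)^{2t+1} = -B^{2t+1}$, any odd power of $B$ is skew-symmetric, and for any skew-symmetric $M$ we have $1^\top M 1 = 1^\top M^\top 1 = -1^\top M 1 = 0$. Next for (ii), I would observe that $B^{4t}$ and $B^{4t+2}$ are real symmetric matrices with eigenvalues $(\lambda_j i)^{2m} = (-1)^m \lambda_j^{2m}$, which are non-negative for $m$ even and non-positive for $m$ odd. Hence $B^{4t}$ is positive semidefinite and $B^{4t+2}$ is negative semidefinite, fixing the signs of $1^\top B^{4t} 1$ and $1^\top B^{4t+2} 1$ and yielding the claimed identities.

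For (iii), I would factor $\lambda_i^{2s} = \lambda_i^{2t}\cdot \lambda_i^{2(s-t)} \le \lambda_i^{2t}\,\lambda_{\max}^{2(s-t)}$ inside the sum, giving $X_{2s} \le \lambda_{\max}(B)^{2(s-t)} X_{2t}$; combined with $\lambda_{\max}(B) = \|B\|_{\mathrm{op}} \le \|B\|_F \le n/2$ (immediate from $B$ being a skew-symmetric real matrix with entries in $[-1/2, 1/2]$) this gives the claimed bound. For (iv), I would apply Cauchy--Schwarz to the splitting $c_i^2 \lambda_i^{2s} = (c_i \lambda_i^{s-t})(c_i \lambda_i^{s+t})$ (valid in $\RR$ since $\lambda_i \in \RR$ and the sum only involves even total powers), yielding
\[
X_{2s}^2 = \Bigl(\sum_i (c_i \lambda_i^{s-t})(c_i \lambda_i^{s+t})\Bigr)^2 \le \Bigl(\sum_i c_i^2 \lambda_i^{2(s-t)}\Bigr)\Bigl(\sum_i c_i^2 \lambda_i^{2(s+t)}\Bigr) = X_{2(s-t)}\,X_{2(s+t)}.
\]

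I do not expect a genuine obstacle in any of the four parts; the lemma is a collection of structural facts about even and odd powers of skew-symmetric real matrices, and each item follows in a few lines once \eqref{eq:olda1} and the spectral radius bound are in hand. If any care is required, it is only in the Cauchy--Schwarz for (iv), where one must note that even though $\lambda_i^{s-t}$ may be negative when $s-t$ is odd, Cauchy--Schwarz holds for arbitrary real sequences and so the inequality is unaffected.
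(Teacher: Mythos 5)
Your proof is correct and follows essentially the same route as the paper: (i) from skew-symmetry of odd powers, (ii) from semidefiniteness of $B^{4t}$ and $B^{4t+2}$, (iii) by factoring out $\lambda_i^{2(s-t)}$ in the spectral formula \eqref{eq:olda1} together with the spectral radius bound $|\lambda_i|\le n/2$, and (iv) by the same Cauchy--Schwarz split of $c_i^2\lambda_i^{2s}$. The only cosmetic difference is that you justify the spectral radius bound via the Frobenius norm, whereas the paper states it directly from the entrywise bound on $B$.
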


\begin{proof}
Equality $(i)$ follows from the fact that $B^{2i-1}$ is skew symmetric. Equality $(ii)$ is because $B^{4i-2}$ is negative semidefinite and $B^{4i}$ is positive semidefinite. Note that every entry of $B$ has absolute value at most 1/2, so the spectral radius is at most $n/2$, i.e. $|\lambda_t|\le n/2$ for every $1\le t\le n$. Then using \Cref{eq:olda1}
$$
X_{2s}=\sum_{k=1}^nc_k^2\lambda_k^{2s}\le (n/2)^{2(s-t)}\sum_{k=1}^nc_k^2\lambda_k^{2t}=X_{2t} (n/2)^{2(s-t)}.
$$
because $c_k \in \RR$. This proves inequality $(iii)$.

By the Cauchy-Schwarz inequality,
$$
X^2_{2s}=\bigp{\sum_{k=1}^nc_k^2\lambda_k^{2s}}^2\le \bigp{\sum_{k=1}^nc_k^2\lambda_k^{2(s-t)}}
\bigp{\sum_{k=1}^nc_k^2\lambda_k^{2(s+t)}}=X_{2(s-t)}X_{2(s+t)}.
$$
again by \Cref{eq:olda1}, which proves inequality $(iv)$.
\end{proof}

\begin{proof}[Proof of Proposition \ref{prop:signflip}]
The path $\rightarrow\leftarrow\leftarrow$ is known to be impartial (both TS and TAS)~\cite{ZHAO19}.
The path $\rightarrow\leftarrow\leftarrow\leftarrow$: the weighted homomorphism count of such paths in the given tournament is
\begin{align*}
1^\top  A (A^\top )^3 1&=1^\top  \bigp{\frac 12 J + B} \bigp{\frac 12 J - B}^3 1\\
&= \frac{1}{16} 1^\top  J^4 1 + \frac{1}{8} 1^\top  (BJ^3 - JBJ^2 - J^2BJ - J^3 B)1 \\
    &+ \frac{1}{4} 1^\top  (-BBJJ - BJBJ-BJJB + JBBJ + JBJB + JJBB)1 \\
    &+ \frac{1}{2} 1^\top  (BBBJ + BBJB + BJBB - JBBB) 1 \\
    &- 1^\top  B^4 1.
\end{align*}
Hence, by Lemma~\ref{lemma:estimating X},
$$
1^\top  A (A^\top )^3 1= \frac{n^5}{16}-\frac{n^2}{4}X_2-X_4\le \frac{n^5}{16}.
$$
The path $\rightarrow\rightarrow\leftarrow\leftarrow$: 
the weighted homomorphism count of such paths in the given tournament is
\begin{align*}
1^\top  A^2 (A^\top )^2 1&=1^\top  \bigp{\frac 12 J + B}^2 \bigp{\frac 12 J - B}^2 1\\
&= \frac{1}{16} 1^\top  J^4 1 + \frac{1}{8} 1^\top  (BJ^3 + JBJ^2 - J^2BJ - J^3 B)1 \\
    &+ \frac{1}{4} 1^\top  (BBJJ - BJBJ-BJJB - JBBJ - JBJB + JJBB)1 \\
    &+ \frac{1}{2} 1^\top  (-BBBJ - BBJB + BJBB +JBBB) 1 \\
    &+ 1^\top  B^4 1.
\end{align*}
Hence, by Lemma~\ref{lemma:estimating X},
$$
1^\top  A (A^\top )^3 1= \frac{n^5}{16}-\frac{n^2}{4}X_2+X_4\le \frac{n^5}{16}-\frac{n^2}{4}X_2+\frac{n^2}{4}X_2= \frac{n^5}{16}.
$$
The path $\rightarrow\leftarrow\leftarrow\leftarrow\leftarrow$: Similar as above, by definition and Lemma~\ref{lemma:estimating X}
\begin{align*}
1^\top  A (A^\top )^4 1&=\frac{n^6}{32}-2\frac{n^3}{8}X_2-X_2^2\le \frac{n^6}{32}.
\end{align*}
The path $\rightarrow\rightarrow\leftarrow\leftarrow\leftarrow$: By definition and Lemma~\ref{lemma:estimating X}
\begin{align*}
1^\top  A^2 (A^\top )^3 1&=\frac{n^6}{32}-2\frac{n^3}{8}X_2+\frac{1}{2}X_2^2
\le\frac{n^6}{32}-\frac{n^3}{4}X_2+\frac{n^2}{8}X_0X_2
\le \frac{n^6}{32}.
\end{align*}
The path $\rightarrow\leftarrow\leftarrow\leftarrow\leftarrow\leftarrow$: By definition and Lemma~\ref{lemma:estimating X}
\begin{align*}
1^\top  A (A^\top )^5 1
&=\frac{n^7}{64}-3\frac{n^4}{16}X_2-\frac{n}{4}X_2^2+\frac{n^2}{4}X_4+X_6\\
&\le \frac{n^7}{64}-\frac{3n^4}{16}X_2+\frac{2n^4}{16}X_2
\le \frac{n^7}{64}.
\end{align*}
The path $\rightarrow\rightarrow\leftarrow\leftarrow\leftarrow\leftarrow$: By definition and Lemma~\ref{lemma:estimating X}
\begin{align*}
1^\top  A^2 (A^\top )^4 1
&=\frac{n^7}{64}-3\frac{n^4}{16}X_2+\frac{n}{4}X_2^2+\frac{n^2}{4}X_4-X_6\\
&\le \frac{n^7}{64}-\frac{3n^4}{16}X_2+\frac{2n^4}{16}X_2
\le \frac{n^7}{64}.
\end{align*}
The path $\rightarrow\rightarrow\rightarrow\leftarrow\leftarrow\leftarrow$: By definition and Lemma~\ref{lemma:estimating X}
\begin{align*}
1^\top  A^3 (A^\top )^3 1
&=\frac{n^7}{64}-3\frac{n^4}{16}X_2+3\frac{n}{4}X_2^2-\frac{n^2}{4}X_4+X_6\\
&\le \frac{n^7}{64}-\frac{3n^4}{16}X_2+\frac{3n^4}{16}X_2-\frac{n^2}{4}X_4+\frac{n^2}{4}X_4=\frac{n^7}{64}.
\end{align*}
The path $\rightarrow\leftarrow\leftarrow\leftarrow\leftarrow\leftarrow\leftarrow$: By definition and Lemma~\ref{lemma:estimating X}
\begin{align*}
1^\top  A (A^\top )^6 1
&=\frac{n^8}{128}-4\frac{n^5}{32}X_2+2\frac{n^3}{8}X_4+2\frac{1}{2}X_2X_4\\
&\le \frac{n^8}{128}-\frac{4n^5}{32}X_2+\frac{4n^5}{32}X_2
\le \frac{n^8}{128}.
\end{align*}
The path $\rightarrow\rightarrow\leftarrow\leftarrow\leftarrow\leftarrow\leftarrow$: By definition and Lemma~\ref{lemma:estimating X}
\begin{align*}
1^\top  A^2 (A^\top )^5 1
&=\frac{n^8}{128}-4\frac{n^5}{32}X_2+2\frac{n^2}{8}X^2_2+2\frac{n^3}{8}X_4-2\frac{1}{2}X_2X_4\\
&\le \frac{n^8}{128}-\frac{4n^5}{32}X_2+\frac{4n^5}{32}X_2
= \frac{n^8}{128}.
\end{align*}
The path $\rightarrow\rightarrow\rightarrow\leftarrow\leftarrow\leftarrow\leftarrow$: By definition and Lemma~\ref{lemma:estimating X}
\begin{align*}
1^\top  A^3 (A^\top )^4 1
&=\frac{n^8}{128}-4\frac{n^5}{32}X_2+4\frac{n^2}{8}X^2_2\\
&\le \frac{n^8}{128}-\frac{4n^5}{32}X_2+\frac{4n^5}{32}X_2
= \frac{n^8}{128}.
\end{align*}
\end{proof}
The same proof does not work for the length-8 path $\rightarrow\leftarrow\leftarrow\leftarrow\leftarrow\leftarrow\leftarrow\leftarrow$, because 
\begin{align*}
1^\top  A (A^\top )^7 1
&=\frac{n^9}{256}-5\frac{n^6}{64}X_2+2\frac{n^3}{16}X_2^2+3\frac{n^4}{16}X_4+\frac{1}{4}X_2^3+2\frac{n}{4}X_2X_4-\frac{n^2}{4}X_6-X_8,
\end{align*}
and Lemma~\ref{lemma:estimating X} does not seem to be sufficient to imply that this is at most $\frac{n^9}{256}$.

\end{document}